 \renewcommand{\a}{\mathbold{a}}
 \newcommand{\transp}{\mathsf{T}}
 \newcommand{\x}{\mathbold{x}}
  \newcommand{\e}{\mathbold{e}}
 \newcommand{\X}{\mathbold{X}}
 \newcommand{\g}{\mathbold{g}}
 \newcommand{\y}{\mathbold{y}}
 \renewcommand{\c}{\mathbold{c}}
 \newcommand{\bbeta}{\mathbold{\beta}}
 \newcommand{\z}{\mathbold{z}}
 \newcommand{\A}{\mathbold{A}}
 \newcommand{\B}{\mathbold{B}}
 \renewcommand{\b}{\mathbold{b}}
 \DeclareMathOperator*{\minimize}{\mathrm{minimize}}
 \DeclareMathOperator*{\maximize}{\mathrm{maximize}}
 \newtheorem{assumption}{Assumption}[section]
\newcommand\bnu{\mathbold{\nu}}
\newcommand\cf{f^{\star}}
\newcommand\cg{g^{\star}}
\newcommand\p{\mathbold{p}}
\newcommand\q{\mathbold{q}}
\newcommand{\fix}{\mathrm{fix}}
\newcommand{\zer}{\mathrm{zer}}
\newcommand{\im}{\mathrm{im}}
\newcommand{\prox}{\mathrm{prox}}
\newcommand{\opT}{\mathsf{T}}
\newcommand{\opG}{\mathsf{G}}
\newcommand{\opI}{\mathsf{I}}
\newcommand{\opR}{\mathsf{R}}
\newcommand{\opC}{\mathsf{C}}
\newcommand{\MK}{Mann-Krasnosel'skii}
\newcommand{\Mk}{Mann-Krasnosel'skii\ }
\begin{document}

\title{Time-Varying Convex Optimization via Time-Varying Averaged Operators
}

\titlerunning{Time-Varying Convex Optimization}        

\author{Andrea Simonetto         
}


\institute{A. Simonetto (\Letter) \at
              IBM Research Ireland, Dublin, Ireland. \\
              Tel.: +353-87-3583843\\
              \email{andrea.simonetto@ibm.com}           
}

\date{Received: date / Accepted: date}

\maketitle

\begin{abstract}
Devising efficient algorithms that track the optimizers of continuously varying convex optimization problems is key in many applications. A possible strategy is to sample the time-varying problem at constant rate and solve the resulting time-invariant problem. This can be too computationally burdensome in many scenarios. An alternative strategy is to set up an iterative algorithm that generates a sequence of approximate optimizers, which are refined every time a new sampled time-invariant problem is available by one iteration of the algorithm. This type of algorithms are called running. A major limitation of current running algorithms is their key assumption of strong convexity and strong smoothness of the time-varying convex function. In addition, constraints are only handled in simple cases. This limits the current capability for running algorithms to tackle relevant problems, such as $\ell_1$-regularized optimization programs. In this paper, these assumptions are lifted by leveraging averaged operator theory and a fairly comprehensive framework for time-varying convex optimization is presented. In doing so, new results characterizing the convergence of running versions of a number of widely used algorithms are derived.

\keywords{Time-varying convex optimization \and Averaged operators \and \MK\ iteration \and Nonsmooth optimization}

\vskip0.2cm

\end{abstract}
\newpage

\section{Introduction}

The goal of this paper is to present a unifying view on time-varying convex optimization based on the theory of averaged operators.  Time-varying convex optimization has appeared as a natural extension of convex optimization where the cost function, the constraints, or both, depend on a time parameter and change continuously in time. This setting captures relevant control problems~\cite{Jerez2014,Hours2014,Gutjahr2016}, when, for instance, one is interested in generating a control action depending on a (parametric) varying optimization problem, as well as signal processing problems~\cite{Jakubiec2013}, where one seeks to estimate a dynamical process based on time-varying observations, or in time-varying compressive sensing~\cite{Asif2014,Yang2015,Vaswani2015,Balavoine2015,Simonetto2015a,Sopasakis2016} and inferential problems on dynamic networks~\cite{Baingana2015}. Additional application domains include  robotics~\cite{ardeshiri2010convex,verscheure2009time,Koppel2015a}, smart grids~\cite{Zhao2014,DallAnese2016}, economics~\cite{Dontchev2013}, and real-time magnetic resonance imaging (MRI)~\cite{Uecker2012}. In the (big) data analytics community, time-varying optimization is appearing in stream computing. 

It is therefore of the utmost importance to present a theory that can encompass the most general optimization problems, and derive algorithms that find and track the optimizer sets of such continuously varying problems. The task of designing such algorithms is usually split into two phases: in the first phase one samples the time-varying optimization problem at discrete sampling instances, so to obtain a time-invariant problem. The second phase is the construction of near optimal decision variables for the time-invariant problems. When the sampling period is small enough, then one can reconstruct the solution trajectory (i.e., the decision variables as a function of time),  with arbitrary accuracy. 

It is rather clear to see that, when each instance of the problem is of large-scale, or when it involves the communication over a network of computing nodes (in a distributed setting), finding accurate near optimal decision variables for each instance is a daunting task. In practice, one would instead attempt at designing algorithms that run at the same time of the changes in the optimization problems. Think of the gradient method for unconstrained optimization. If the cost function changes in time, one would like to sample the cost function and perform only a few (perhaps only one) gradient step(s) per sampling time. This in contrast with the computationally harder task of running the gradient method at optimality for each instance of the problem. We call the algorithms that perform a limited number of iteration per sampling time \emph{running} methods.   

At the present stage, \emph{running} methods have been derived for special classes of optimization problems, namely strong convex and strong smooth cost functions with no or simple constraint sets~\cite{Popkov2005,Tu2011,Bajovic2011, Dontchev2013,Zavlanos2013,Jakubiec2013,Ling2013,Simonetto2014c,Simonetto2014d,Ye2015,Xi2016a,Sun2017}. A very interesting recent paper~\cite{Maros2017} has presented a running alternating direction method of multipliers (ADMM) algorithm that has been proven to converge even if the decomposed problems are not necessarily strongly convex. The proof technique relies on a compactness assumption of the feasible set. 

In some cases, requiring higher order smoothness conditions, prediction-correction schemes have been implemented~\cite{Paper1,Paper2,Paper3}, where not only the algorithm react to the changes in the problem, but actively predict how the optimal decision variable evolve.  Some works, under these smooth and strong convexity settings, have proposed continuous-time algorithms~\cite{Rahili2015,Fazlyab2015,Fazlyab2016}. 

In this paper, we use the theory of averaged operators~\cite{Bauschke2011} to derive running algorithms for a larger class of time-varying optimization problems and by doing so we generalize a number of results that have appeared in recent years. In particular, 
\begin{enumerate}
\item[\emph{i)}] We propose a running version of the \Mk (fixed-point) iteration and prove its convergence under reasonable assumptions (Theorems~\ref{theo:1} and~\ref{theo:3}). The time-invariant version of this iteration is the building block of a very large class of time-invariant optimization algorithms; similarly, the running version is key for time-varying ones;
\item[\emph{ii)}] We present the consequences of the running \Mk iteration on time-varying optimization. We derive a number of algorithms, namely running projected gradient, proximal-point, forward-backward splitting, and dual ascent and prove their convergence (Corollaries~\ref{co:gradient} till~\ref{co:da} and Proposition~\ref{co:fb_obj}, Corollaries~\ref{co:gr-obj}-\ref{co:pp-obj}). These results extend the work in~\cite{Popkov2005, Simonetto2014c} to a wider class of optimization problems;
\item[\emph{iii)}] We show how to enforce a properly defined bounded assumption for an even larger class of time-varying optimization problems, and this allows us to derive the running versions of both dual decomposition and ADMM and prove their convergence (Corollaries~\ref{co:daeq}-\ref{co:admm}). These results are important generalizations of earlier works \cite{Jakubiec2013, Ling2013}.  
\end{enumerate} 

The remainder of the paper is organized as follows. Section~\ref{sec:prelim} presents some necessary preliminaries on averaged operators and on the \Mk iteration in the time-invariant setting. In Section~\ref{sec:main}, we state the main assumptions, propose the running \Mk iteration, and prove its convergence. Section~\ref{sec:alternatives} reports an alternative problem assumption to the ones presented in Section~\ref{sec:main} and offers a different angle to tackle the convergence proof for the running \Mk iteration. Sections~\ref{sec:vanishing} and \ref{sec:literature} are somewhat additional, but still relevant, and they study the case of a time-varying setting that eventually reaches steady-state, and provide links to existing works, respectively. Sections~\ref{sec:opt}, \ref{sec:opt.b}, and \ref{sec:obj_conv} focus on the consequences of the running \Mk iteration on time-varying optimization, which is the main aim of this paper. A numerical example is offered in Section~\ref{sec:num} and we conclude in Section~\ref{sec:concl}.

\textbf{Notation. } Vectors and matrices are indicated in boldface, e.g., $\x\in\mathbb{R}^n$, $\A \in \mathbb{R}^{n \times m}$, sets with calligraphic letters as $\mathcal{X}$.  We use $\|\cdot\|$ to denote the Euclidean norm in the vector space, and the respective induced norms for matrices and tensors. The norm of a set $\mathcal{X}$ is the norm of its largest element w.r.t the selected vector/matrix norm.   

We will deal with time-varying functions $f(\x; t): \mathbb{R}^n \times \mathbb{R}_{+} \to \mathbb{R}$, whose properties are said to be uniform if they are true for all times $t$. For example, a function $f(\x; t)$ is said to be uniformly convex, iff it is convex in the variable $\x$ for all $t$. 

A function $f(\x)$ is strongly convex with constant $m$, iff $f(\x)- m/2\|\x\|^2$ is convex. A function $f(\x)$ is strongly smooth (or equivalently is differentiable and has Lipschitz continuous gradient) with constant M, iff $f(\x) - M/2\|\x\|^2$ is concave (Other equivalent definitions can be used, see~\cite{Ryu2015}). We indicate the subdifferential operator of a convex function $f$ as $\partial f$, which is defined as
$$
\partial f (\x) = \{ \g | \g^\transp (\y-\x) \leq f(\y)-f(\x), \forall \y \in \textrm{dom} f\};
$$
when the function is differentiable, then $\partial f = \nabla f$, that is the subdifferential operator is the gradient operator. Subdifferential operators are in general set-valued operators, while gradient operators are single-valued. Functions that are closed, convex, and proper are indicated as CCP. The Fenchel's conjugate of a function $f:\mathbb{R}^n \to \mathbb{R}$ is indicated with $\cf$ and has the usual definition $\cf(\y) = \sup_{\x\in\mathbb{R}^n}\{\y^\transp\x - f(\x)\}$. 

Operators are indicated with capital sans serif letters like $\opT$, or $\opI$ for the identity operator.


\section{Preliminaries}\label{sec:prelim}

Some necessary preliminaries are reviewed in this section; the interested readers can find more details in standard references such as \cite{Rockafellar1970, Eckstein1989, Rockafellar1998, Bauschke2011, AragonArtacho2014a, Ryu2015}. 

A set-valued operator $\opT: \mathbb{R}^n \rightrightarrows \mathbb{R}^n$ is said to be monotone, if it satisfies
\begin{equation}
(\opT(\x) - \opT(\y))^\transp(\x - \y) \geq 0,\quad \forall \x, \y \in \mathbb{R}^n.
\end{equation} 
A monotone operator is maximal if there is no monotone operator that properly contains it.  An example of maximal monotone operator is the subdifferential $\partial f$ of a closed convex proper function $f$. An operator $\opT: \mathbb{R}^n \rightrightarrows \mathbb{R}^n$ is said to be a contraction, if
\begin{equation}\label{eq.contr}
\|\opT(\x) - \opT(\y)\| \leq L \|\x - \y\|,\quad \forall \x, \y \in \mathbb{R}^n
\end{equation}
for $L \in (0,1)$. If $L = 1$, $\opT$ is said to be nonexpansive. Both cases imply that $\opT$ is a function. An operator $\opT: \mathbb{R}^n \rightrightarrows \mathbb{R}^n$ is said to be $\alpha$-averaged (or simply averaged) when it is the convex combination of a nonexpansive operator $\opG: \mathbb{R}^n \rightrightarrows \mathbb{R}^n$ and the identity operator $\opI$, i.e., 
\begin{equation}\label{eq.alphaav}
\opT = (1-\alpha) \opI + \alpha \opG, 
\end{equation}  
for $\alpha \in (0,1)$. Averaged operators are nonexpansive by construction. 

\begin{proposition}\emph{(Composition of $\alpha$-averaged operators)}\label{composition}\cite[Proposition 2.4]{Combettes2015}
Let $\opT_1: \mathbb{R}^n \rightrightarrows \mathbb{R}^n$ and $\opT_2: \mathbb{R}^n \rightrightarrows \mathbb{R}^n$ be two $\alpha$-averaged operators with constants $\alpha_1$ and $\alpha_2$, respectively. Define 
\begin{equation}
\opT = \opT_1 \opT_2, \quad \alpha = \frac{\alpha_1 + \alpha_2 - 2\alpha_1\alpha_2}{1-\alpha_1 \alpha_2}.
\end{equation}
Then the operator $\opT$ is $\alpha$-averaged with constant $\alpha$.
\end{proposition}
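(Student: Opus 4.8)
The plan is to use the standard quadratic characterization of $\alpha$-averagedness and thereby reduce the statement to an elementary inequality between two quadratic forms. Since the nonexpansive factor $\opG$ in~\eqref{eq.alphaav} is single-valued, the operators $\opT_1$, $\opT_2$, and $\opT$ are genuine functions, so the set-valued notation causes no difficulty. I would first record that a single-valued operator $\opC$ is $\alpha$-averaged if and only if
\[
\|\opC\x - \opC\y\|^2 + \frac{1-\alpha}{\alpha}\,\|(\opI-\opC)\x - (\opI-\opC)\y\|^2 \le \|\x-\y\|^2, \qquad \forall\,\x,\y.
\]
This is immediate from~\eqref{eq.alphaav}: solving $\opC = (1-\alpha)\opI + \alpha\opG$ for $\opG$ gives $\opG = \opI + \alpha^{-1}(\opC-\opI)$, and nonexpansiveness of $\opG$ reads $\|(\x-\y) - \alpha^{-1}[(\opI-\opC)\x-(\opI-\opC)\y]\|^2 \le \|\x-\y\|^2$; expanding the square and rearranging yields the displayed inequality, every step being reversible.

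Next, fix $\x,\y$ and set $a := (\opI-\opT_2)\x-(\opI-\opT_2)\y$ and $b := (\opI-\opT_1)(\opT_2\x)-(\opI-\opT_1)(\opT_2\y)$. A one-line telescoping shows $(\opI-\opT)\x-(\opI-\opT)\y = a+b$. Applying the characterization to $\opT_1$ at the points $\opT_2\x,\opT_2\y$ and then to $\opT_2$ at $\x,\y$, and chaining the two bounds, I obtain $\|\opT\x-\opT\y\|^2 \le \|\x-\y\|^2 - \tfrac{1-\alpha_2}{\alpha_2}\|a\|^2 - \tfrac{1-\alpha_1}{\alpha_1}\|b\|^2$. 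Hence it suffices to prove the purely algebraic inequality
\[
\frac{1-\alpha_2}{\alpha_2}\,\|a\|^2 + \frac{1-\alpha_1}{\alpha_1}\,\|b\|^2 \;\ge\; \frac{1-\alpha}{\alpha}\,\|a+b\|^2 .
\]

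Finally, using the factorization $1-\alpha = (1-\alpha_1)(1-\alpha_2)/(1-\alpha_1\alpha_2)$ together with the definition of $\alpha$, one finds $\tfrac{1-\alpha}{\alpha} = (1-\alpha_1)(1-\alpha_2)/D$, where $D := \alpha_1+\alpha_2-2\alpha_1\alpha_2 = \alpha_1(1-\alpha_2)+\alpha_2(1-\alpha_1) > 0$. Writing $p := (1-\alpha_2)/\alpha_2$, $q := (1-\alpha_1)/\alpha_1$, and $r := (1-\alpha_1)(1-\alpha_2)/D$, the difference of the two sides of the reduced inequality equals the quadratic form $(p-r)\|a\|^2 - 2r\,a^\transp b + (q-r)\|b\|^2$. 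A short computation gives $p-r = \alpha_1(1-\alpha_2)^2/(\alpha_2 D) \ge 0$, $q-r = \alpha_2(1-\alpha_1)^2/(\alpha_1 D) \ge 0$, and $(p-r)(q-r) = r^2$; hence the form is the perfect square $\|\sqrt{p-r}\,a - \sqrt{q-r}\,b\|^2 \ge 0$. This establishes the reduced inequality, so $\opT$ satisfies the characterization with constant $\alpha$, i.e.\ $\opT$ is $\alpha$-averaged. The only real obstacle is this last step: the \emph{exact} vanishing of the determinant $(p-r)(q-r)-r^2$ is precisely what makes the stated $\alpha$ the right (indeed tightest) constant, and seeing this cleanly — rather than settling for a lossy Young-type split $2a^\transp b \le \lambda\|a\|^2+\lambda^{-1}\|b\|^2$ with an ad hoc $\lambda$ — is the crux of the argument.
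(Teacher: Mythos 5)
Your proof is correct. Note that the paper itself does not prove this proposition---it is imported verbatim as a preliminary, with a citation to Combettes--Yamada~\cite[Proposition 2.4]{Combettes2015}---so there is no in-paper argument to compare against; your derivation via the quadratic characterization of $\alpha$-averagedness, the telescoping decomposition $(\opI-\opT)\x-(\opI-\opT)\y=a+b$, and the exact vanishing of the discriminant $(p-r)(q-r)-r^2$ is essentially the standard proof from that reference (there the final step is phrased as the Cauchy--Schwarz-type inequality $\|a+b\|^2\le(\tau_1+\tau_2)\bigl(\tau_2^{-1}\|a\|^2+\tau_1^{-1}\|b\|^2\bigr)$ with $\tau_1=\alpha_1/(1-\alpha_1)$, $\tau_2=\alpha_2/(1-\alpha_2)$, which is your perfect square in disguise).
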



A fixed point of the operator $\opT$ is a point $\x \in \mathbb{R}^n$ for which $\x = \opT(\x)$. If $\opT$ is $\alpha$-averaged in the sense of~\eqref{eq.alphaav}, then $\opT$ and $\opG$ have the same fixed points, i.e., $\fix \opT = \fix \opG$. 

\begin{proposition}\emph{(\Mk iteration, \cite{Bauschke2011, Cominetti2014, Ryu2015})}\label{prop.Mk}
Consider the operator $\opT: \mathbb{R}^n \rightrightarrows \mathbb{R}^n$. Let $\opT$ be $\alpha$-averaged in the sense of~\eqref{eq.alphaav}. Consider the sequence $\{\x_k\}_{k \in \mathbb{N}_{>0}}$ generated by the \Mk (or fixed point) iteration
\begin{equation}\label{fixedpoint}
\x_{k+1} = \opT(\x_{k}) = \x_k + \alpha(\opG(\x_k) - \x_k).
\end{equation}
Then the sequence $\{\x_k\}_{k \in \mathbb{N}_{>0}}$ converges weakly to a fixed point of $\opT$, i.e. $\x_k \rightharpoonup \x^*$, with $\x^* \in \fix \opT$, and we have the following bounds on the fixed-point residual $\|\opG(\x_k) - \x_k\|$,
\begin{equation}
\frac{1}{T}\sum_{k=1}^T\|\opG(\x_k) - \x_k \|^2 \leq \frac{\|\x_1 - \x^*\|^2}{\alpha (1-\alpha)\, T}, \quad \|\opG(\x_T) - \x_T \| \leq \frac{\|\x_1 - \x^*\|}{\sqrt{\alpha (1-\alpha)\, T}},
\end{equation}
where $T$ is the number of iterations. 

In addition, if $\opT$ is a contraction with constant $L$, then the ``decision variables'' sequence $\{\x_k\}_{k \in \mathbb{N}_{>0}}$ converges strongly to a fixed point of $\opT$ as
\begin{equation}\label{linear}
\|\x_k - \x^*\| \leq L^{k-1} \|\x_1 - \x^*\|, \quad \x^* \in \fix \opT.
\end{equation}
\end{proposition}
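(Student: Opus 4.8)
The plan is to base the whole argument on a single Fej\'er-monotonicity inequality and then read off all three conclusions from it. Fix any $\x^* \in \fix \opT = \fix \opG$, so that $\opG(\x^*) = \x^*$. Writing the update as $\x_{k+1} - \x^* = (1-\alpha)(\x_k - \x^*) + \alpha(\opG(\x_k) - \x^*)$ and applying the convex-combination identity
\begin{equation}\label{eq.cc}
\|(1-\alpha)\a + \alpha\b\|^2 = (1-\alpha)\|\a\|^2 + \alpha\|\b\|^2 - \alpha(1-\alpha)\|\a-\b\|^2
\end{equation}
with $\a = \x_k - \x^*$ and $\b = \opG(\x_k) - \x^*$, I would use the nonexpansiveness of $\opG$, i.e. $\|\opG(\x_k) - \x^*\| = \|\opG(\x_k) - \opG(\x^*)\| \le \|\x_k - \x^*\|$, to absorb the first two terms and obtain the key descent inequality
\begin{equation}\label{eq.descent}
\|\x_{k+1} - \x^*\|^2 \le \|\x_k - \x^*\|^2 - \alpha(1-\alpha)\|\opG(\x_k) - \x_k\|^2.
\end{equation}

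The two residual bounds then follow by pure summation. Telescoping \eqref{eq.descent} from $k=1$ to $T$ gives $\alpha(1-\alpha)\sum_{k=1}^T \|\opG(\x_k) - \x_k\|^2 \le \|\x_1 - \x^*\|^2 - \|\x_{T+1} - \x^*\|^2 \le \|\x_1 - \x^*\|^2$, which is the averaged bound after dividing by $\alpha(1-\alpha)T$. For the last-iterate bound I would first show that the residual sequence is monotone nonincreasing: since $\opT$ is nonexpansive and $\x_{k+1} - \x_k = \alpha(\opG(\x_k) - \x_k)$, one has $\alpha\|\opG(\x_{k+1}) - \x_{k+1}\| = \|\x_{k+2} - \x_{k+1}\| = \|\opT(\x_{k+1}) - \opT(\x_k)\| \le \|\x_{k+1} - \x_k\| = \alpha\|\opG(\x_k) - \x_k\|$. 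Hence $\|\opG(\x_T) - \x_T\|$ is the smallest among the first $T$ residuals, so $T\|\opG(\x_T) - \x_T\|^2 \le \sum_{k=1}^T\|\opG(\x_k) - \x_k\|^2$; combining with the summation bound and taking square roots yields the claimed $O(1/\sqrt{T})$ rate.

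The substantive part, and the step I expect to be the main obstacle, is the weak convergence to a fixed point. Because \eqref{eq.descent} holds for \emph{every} $\x^* \in \fix \opG$, the scalar sequence $\|\x_k - \x^*\|$ is nonincreasing, hence convergent, for each fixed point; in particular $\{\x_k\}$ is bounded, and summing \eqref{eq.descent} over all $k$ gives $\sum_k\|\opG(\x_k)-\x_k\|^2<\infty$, whence $\|\opG(\x_k) - \x_k\| \to 0$. I would then invoke the demiclosedness principle for nonexpansive operators: any weak cluster point $\bar\x$ of $\{\x_k\}$ satisfies $(\opI - \opG)\bar\x = 0$, i.e. $\bar\x \in \fix \opG$, since $\x_k \rightharpoonup \bar\x$ along a subsequence while $(\opI - \opG)\x_k \to 0$ strongly. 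Opial's lemma then applies, as $\|\x_k - \x^*\|$ converges for every $\x^* \in \fix\opG$ and all weak cluster points lie in $\fix\opG$, forcing the entire sequence to converge weakly to a single fixed point. In the $\mathbb{R}^n$ setting of the statement this simplifies considerably, since weak and strong convergence coincide and Bolzano--Weierstrass supplies convergent subsequences directly; the delicate point in the general Hilbert-space formulation is precisely the demiclosedness principle, which is what rules out escaping cluster points.

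The strong-convergence claim under the contraction hypothesis is then immediate and I would dispatch it separately: if $\opT$ is an $L$-contraction it admits a unique fixed point $\x^*$, and $\|\x_k - \x^*\| = \|\opT(\x_{k-1}) - \opT(\x^*)\| \le L\|\x_{k-1} - \x^*\|$; iterating this bound gives $\|\x_k - \x^*\| \le L^{k-1}\|\x_1 - \x^*\|$, which is exactly the linear rate \eqref{linear}.
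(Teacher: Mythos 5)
Your proposal is correct, and it follows essentially the same route the paper uses: the paper states Proposition~\ref{prop.Mk} as a cited result without proof, but its own appendix proof of Theorem~\ref{theo:1} is built on exactly your descent inequality, obtained from the same convex-combination (``triangle equality'') identity plus nonexpansiveness of $\opG$, followed by telescoping. Your additional pieces --- monotonicity of the residual for the last-iterate bound, demiclosedness/Opial for weak convergence (trivialized in $\mathbb{R}^n$), and Banach iteration for the contraction case --- are the standard arguments from the cited references and are all sound.
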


Proposition~\ref{prop.Mk} is key in generating algorithms to find fixed points of operators $\opT$. If $\opT$ is $\alpha$-averaged, then by using~\eqref{fixedpoint} one can compute the fixed points of $\opT$ in the limit, and the convergence rate based on the squared of the residual is bounded as $O(1/T)$. We note that when the norm $\|\opG(\x_k) - \x_k \|\to 0$, then $\x_k$ is a fixed point of $\opG$ and thus of $\opT$. This type of error norm is useful in practice, since one can easily monitor it on-line and use it as a stopping criterion. The result on the convergence of $\|\opG(\x_k) - \x_k \|$ as $O(1/\sqrt{T})$ is due to~\cite{Vaisman2005} (see also~\cite{Cominetti2014}).

If in addition $\opT$ is a contraction, then one obtain linear convergence in the decision variables $\x_k$. This second convergence result is stronger, since it involves directly the decision variables. From~\eqref{linear}, one can also derive a stronger result on the residual as\footnote{From $\|\opT(\x_k) - \x_k \| = \|\opT(\x_k) - \opT(\x^*) + \x^* - \x_k \| \leq \|\opT(\x_k) - \opT(\x^*)\|  + \|\x_k - \x^*\| \leq 2 \|\x_k - \x^*\|$, and then applying~\eqref{linear} and squaring.}, 
\begin{equation}
\|\opT(\x_k) - \x_k \|^2 \leq 4\,L^{2(k-1)} \|\x_1 - \x^*\|^2, \quad \x^* \in \fix \opT.
\end{equation}

Finding fixed points is a cornerstone in convex optimization. The prototype problem, 
\begin{equation}\label{optprob}
\minimize_{\x \in \mathbb{R}^n} \, f(\x)
\end{equation} 
where $f: \mathbb{R}^n \to \mathbb{R}$ is a closed convex proper function, can be interpreted as finding the zeros of the subdifferential operator $\partial f$, which is equivalent of finding the fixed points of the operator $\opI - \lambda \partial f$, for all nonzero scalar $\lambda$, i.e.,
\begin{equation}
\x^* \in \zer\, \partial f \quad \iff \quad \x^* \in \fix (\opI - \lambda \partial f)\,. 
\end{equation} 
When $f$ is strongly smooth with parameter $M$, and thus $\partial f = \nabla f$, and $\lambda \in (0, 2/M)$, then the operator $\opT_{\mathrm{G}} = \opI - \lambda \nabla f$ is $\alpha$-averaged with $\alpha = \lambda M/2$~\cite{Ryu2015} and therefore the fixed point iteration
\begin{equation}\label{eq.gradient}
\x_{k+1} = (\opI - \lambda \nabla f)(\x_k) = \x_k - \lambda \nabla f(\x_k)
\end{equation}
generates a sequence $\{\x_k\}_{k\in\mathbb{N}_{>0}}$ that converges as dictated by Proposition~\ref{prop.Mk}. To see the $\alpha$-averageness of $\opT_{\mathrm{G}}$ is sufficient to notice that
\begin{equation}
\opI - \lambda \nabla f = (1-\alpha)\opI + \alpha (\opI - 2/M \nabla f), \quad \mathrm{with}\,\, \alpha = \lambda M/2.
\end{equation}
Therefore, as for Proposition~\ref{prop.Mk} one has that
\begin{equation}
\|\nabla f(\x_T)\| \leq O(1/\sqrt{T}).
\end{equation}
In addition, if $f$ is also strongly convex with parameter $m$, then $\opI - \lambda \nabla f$ is a contraction, and linear convergence, that is~\eqref{linear}, can be established as~\cite{Ryu2015} 
\begin{equation}\label{grlinear}
\|\x_k - \x^*\| \leq L^{k-1} \|\x_1 - \x^*\|, \quad L = \max\{|1-\lambda m|,|1 - \lambda M|\},
\end{equation}
where now $\x^*$ is the unique optimizer of~\eqref{optprob}. Iteration~\eqref{eq.gradient} is generally known as the gradient method. 

Many other convex optimization algorithms can be seen as fixed point iterations of a properly defined $\alpha$-averaged operators. To mention only a few, projected gradient method~\cite{Goldstein1964, Levitin1966}, proximal point method~\cite{Rockafellar1976}, iterative shrinkage thresholding algorithm (ISTA)~\cite{Beck2009}, dual ascent~\cite{Tseng1990,Nedic2009a}, forward-backward splitting~\cite{Combettes2005, Duchi2009}, and the celebrated alternating direction method of multipliers (ADMM)~\cite{Bertsekas1997,Schizas2008,Boyd2011} fall in this class. 

\section{Problem Formulation and Time-Varying Algorithm}\label{sec:main}

The main aim of this paper is to develop a more general theory for time-varying convex optimization, that is devising efficient algorithms capable of finding and tracking the solution set of continuously varying convex programs. 

In order to achieve this goal, operator theory is leveraged. In particular, as discussed, there is a tight connection between a large class of algorithms used in time-invariant optimization and finding the fixed points of careful designed $\alpha$-averaged operators. In this respect, in this section, the focus is on designing algorithms to find the fixed points of a continuously varying $\alpha$-averaged operator. The connections with optimization will be clear in Sections~\ref{sec:opt}-\ref{sec:opt.b}. 

The aim is therefore determining for each time $t\geq 0$, the set (or a point in the set)
\begin{equation}
\fix \opT(\x; t),
\end{equation}
where $\opT: \mathbb{R}^n \times \mathbb{R}_{+} \rightrightarrows \mathbb{R}^n$ is an operator uniformly in time. The approach is to sample the operator at discrete sampling times $t_k, k \in \mathbb{N}_{> 0}$, and to determine the time-invariant sets 
\begin{equation}
\fix \opT(\x; t_k) =: \opT_k(\x),
\end{equation}
for each sampling time $t_k$. 

The algorithms that are sought are of the form:
\begin{enumerate}
\item[1.] Set $\x_1$ arbitrarily,
\item[2.] \textbf{for} $k>1$ \textbf{do}:

Sample the time-varying operator $\opT_k(\cdot) = \opT(\cdot; t_k)$;

Compute the next approximate fixed point
\begin{equation}\label{tvfixedpoint}
\x_{k+1} = \opT_k(\x_k).
\end{equation}
\end{enumerate}

In accordance with widespread nomenclature, Iteration~\eqref{tvfixedpoint} is called the \emph{running} \Mk algorithm. 
Our first main contribution is to prove that the running \Mk algorithm converges in some defined sense. The following assumptions are needed throughout the paper.

\begin{assumption}\label{as:tv}
\emph{(Bounded time variations)}
For each time $t_k>0$, there exists a sequence of fixed points $\{\x^*_\tau\}$ from $t_1$ till $t_k$, and a non-negative scalar $\delta$, such that, $\x^*_\tau = \fix \opT_\tau$, for all $\tau \in (1, k]$ and 
\begin{equation}
\|\x^*_{\tau} - \x^*_{\tau-1}\| \leq \delta, \, \forall \tau \in (1,k].
\end{equation} 
%
%
\end{assumption}

Assumption~\ref{as:tv} is a reasonable and mild assumption, which bounds the time variations of the fixed point sets of the time-varying operators. Assumption~\ref{as:tv} is an extended version of the standard required assumption that the Euclidean distance between unique fixed points at subsequent times must be bounded. In fact, if both $\fix\, \opT_{k+1}$ and $\fix \opT_{k}$ are a singleton, then Assumption~\ref{as:tv} coalesces to the standard 
\begin{equation}
\|\x^*_{k+1} - \x^*_k \| \leq \delta, \quad \fix\, \opT_{k+1} = \{\x^*_{k+1}\}, \, \fix\, \opT_{k} = \{\x^*_{k}\}.
\end{equation}



We then consider two additional assumptions on the nature of the operators we are dealing with (these assumptions are not considered to hold simultaneously). 

\begin{assumption} \label{as:bounded}
\emph{(Bounded $\alpha$-averaged operators)}
Let $\{\opT_k\}_{k \in \mathbb{N}_{>0}}$ be a sequence of operators from $\mathbb{R}^n \rightrightarrows \mathbb{R}^n$. We assume that \emph{(i)} each of the $\opT_k$ is an $\alpha_k$-averaged operator in the sense of~\eqref{eq.alphaav}; \emph{(ii)} the image of each operator $\im\, \opT_k = \mathcal{X}_k \subset \mathbb{R}^n$ is a closed compact set, and therefore bounded, and we let $X$ be defined as
\begin{equation}
X : = \max_{k \in \mathbb{N}_{>0}, \x_k \in\mathbb{R}^n} \, \|\opT_k(\x_k)\|.
\end{equation}
\end{assumption}

\begin{assumption}\label{as:contraction}
\emph{(Contractive operators)} Let $\{\opT_k\}_{k \in \mathbb{N}_{>0}}$ be a sequence of operators from $\mathbb{R}^n \rightrightarrows \mathbb{R}^n$. We assume that each $\opT_k$ is a contraction with parameter $L_k\in(0,1)$, in the sense of~\eqref{eq.contr}.
\end{assumption}


Assumptions~\ref{as:bounded}-\ref{as:contraction} will not be considered at the same time. Assumption \ref{as:contraction} is in line with standard literature (which assumes strong smoothness and strong convexity and therefore contractive operators). Assumption \ref{as:bounded} is instead more general and will allow us to generate converging time-varying algorithms for a wider class of optimization problems. Although it may seem restrictive at first sight, many optimization problems verify naturally this assumption. To allow for even more general optimization problem, one would need to remove the boundedness requirement in Assumption~\ref{as:bounded}: as we will argue, this seems to be unavoidable when dealing with $\alpha$-averaged operators, as one needs a measure to quantify the error committed by the time-varying algorithm at each step. A similar requirement is needed in the converging proof of $\epsilon$-(sub)gradient methods, or to quantify errors in regularized problems~\cite{Johansson2008,Nedic2011,Koppel2015}. A similar compactness requirement is imposed in~\cite{Maros2017} for running ADMM algorithms. One alternative approach to substitute this requirement with another one (possibly less restrictive, yet sequence-depending) will be discussed in Section~\ref{sec:alternatives}. A way to enforce this boundedness requirement in a structured way is instead presented in Section \ref{sec:opt.b}.


The following theorem characterizes the convergence and tracking capabilities of Iteration~\eqref{tvfixedpoint}. The proof is given in the appendix. 

\begin{theorem} \emph{(Running \Mk algorithm convergence)}\label{theo:1}
Consider $\{\opT_k\}_{k \in \mathbb{N}_{>0}}$ as a sequence of $\alpha_k$-averaged operators from $\mathbb{R}^n \rightrightarrows \mathbb{R}^n$, and assume $\fix\, T_k \neq \emptyset$, for all $k$. Let $\{\x_k\}_{k \in \mathbb{N}_{>0}}$ be the sequence generated by the running \Mk algorithm \eqref{tvfixedpoint}, for the sequence $\{\opT_k\}_{k \in \mathbb{N}_{>0}}$. Let Assumption~\ref{as:tv} hold. Then, 
\begin{enumerate}
\item[(a)] if Assumption~\ref{as:bounded} holds, the fixed-point residual $\|\opG_k (\x_k) - \x_k\|$ converges in mean to an error bound as, 
\begin{equation}\label{result1}
\frac{1}{T}\,\sum_{k=1}^T\, \alpha_k (1-\alpha_k)\|\opG_k(\x_k) - \x_k\|^2 \leq \frac{1}{T}\|\x_1 - \x^*_1\|^2 + \delta\,(4 X + \delta);
\end{equation}
and, given that $\opT_k = (1-\alpha_k)\opI + \alpha_k \opG_k$,
\begin{equation}\label{result1bis}
\frac{1}{T}\,\sum_{k=1}^T\, \frac{1-\alpha_k}{\alpha_k}\|\opT_k(\x_k) - \x_k\|^2 \leq \frac{1}{T}\|\x_1 - \x^*_1\|^2 + \delta\,(4 X + \delta);
\end{equation}
\item[(b)] if Assumption~\ref{as:contraction} holds, the error norm $\|\x_k - \x_k^*\|$ converges as
\begin{equation}\label{result2}
\|\x_k - \x_k^*\| \leq \hat{L}_k\,\|\x_1 - \x_1^*\| + \frac{1-\bar{L}_k^{k-1}}{1-\bar{L}_k} \delta, 
\end{equation}  
where $\x_k^* \in \fix \opT_k$, $\hat{L}_k = L_1 \cdots L_{k-1}$, and $\bar{L}_k = \max_{k} L_k$. 
\end{enumerate}
\end{theorem}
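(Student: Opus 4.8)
The backbone of both parts is the standard descent inequality for a single $\alpha$-averaged operator. Writing $\opT_k = (1-\alpha_k)\opI + \alpha_k\opG_k$ and expanding $\opT_k(\x_k) - \x^*_k = (1-\alpha_k)(\x_k - \x^*_k) + \alpha_k(\opG_k(\x_k) - \x^*_k)$ with the identity $\|(1-\alpha)\a + \alpha\b\|^2 = (1-\alpha)\|\a\|^2 + \alpha\|\b\|^2 - \alpha(1-\alpha)\|\a - \b\|^2$, then using $\x^*_k \in \fix\opG_k$ together with the nonexpansiveness of $\opG_k$, I would first establish
\[
\|\x_{k+1} - \x^*_k\|^2 \le \|\x_k - \x^*_k\|^2 - \alpha_k(1-\alpha_k)\|\opG_k(\x_k) - \x_k\|^2,
\]
valid for any $\x^*_k \in \fix\opT_k = \fix\opG_k$. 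This is a per-step contraction toward the \emph{current} fixed point; the whole difficulty is that the fixed point itself moves from step to step.

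For part (a), the plan is to convert this inequality, which is anchored at $\x^*_k$, into one anchored at the next fixed point $\x^*_{k+1}$ so that a telescoping sum can form. Using the triangle inequality and Assumption~\ref{as:tv}, $\|\x_{k+1} - \x^*_{k+1}\| \le \|\x_{k+1} - \x^*_k\| + \delta$; squaring and bounding the cross term $\|\x_{k+1} - \x^*_k\| \le \|\x_{k+1}\| + \|\x^*_k\| \le 2X$ (here Assumption~\ref{as:bounded} enters decisively, since $\x_{k+1} = \opT_k(\x_k)$ and $\x^*_k = \opT_k(\x^*_k)$ both lie in $\im\,\opT_k$) yields $\|\x_{k+1} - \x^*_{k+1}\|^2 \le \|\x_{k+1} - \x^*_k\|^2 + \delta(4X+\delta)$. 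Chaining this with the descent inequality gives, with $V_k := \|\x_k - \x^*_k\|^2$,
\[
\alpha_k(1-\alpha_k)\|\opG_k(\x_k) - \x_k\|^2 \le V_k - V_{k+1} + \delta(4X+\delta).
\]
Summing from $k=1$ to $T$, telescoping $V_k - V_{k+1}$, discarding the nonnegative $V_{T+1}$, and dividing by $T$ delivers~\eqref{result1}. The companion bound~\eqref{result1bis} is then immediate from $\opT_k(\x_k) - \x_k = \alpha_k(\opG_k(\x_k) - \x_k)$, which gives the identity $\alpha_k(1-\alpha_k)\|\opG_k(\x_k)-\x_k\|^2 = \tfrac{1-\alpha_k}{\alpha_k}\|\opT_k(\x_k) - \x_k\|^2$.

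For part (b), I would exploit contractivity directly on the decision variables. Since $\x^*_k = \opT_k(\x^*_k)$ and $\opT_k$ is $L_k$-Lipschitz, $\|\x_{k+1} - \x^*_k\| = \|\opT_k(\x_k) - \opT_k(\x^*_k)\| \le L_k\|\x_k - \x^*_k\|$; combining with the triangle inequality and Assumption~\ref{as:tv} produces the scalar recursion $e_{k+1} \le L_k e_k + \delta$ with $e_k := \|\x_k - \x^*_k\|$. Unrolling this linear recursion gives $e_k \le \hat{L}_k\,e_1 + \delta\sum_{j=0}^{k-2}(L_{k-1}\cdots L_{k-j})$; bounding each of the $k-1$ products by the corresponding power of $\bar{L}_k = \max_k L_k$ and summing the geometric series $\sum_{j=0}^{k-2}\bar{L}_k^{\,j} = (1-\bar{L}_k^{\,k-1})/(1-\bar{L}_k)$ yields~\eqref{result2}.

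The only genuinely delicate point is the fixed-point drift in part (a): the single-operator theory telescopes only against a fixed anchor, so the re-anchoring step that trades $\x^*_k$ for $\x^*_{k+1}$ must be paid for, and the per-step price $\delta(4X+\delta)$ is finite precisely because the image boundedness of Assumption~\ref{as:bounded} controls $\|\x_{k+1} - \x^*_k\|$. Without such a uniform bound this cross term is uncontrolled, which is exactly why boundedness (or the sequence-dependent alternative of Section~\ref{sec:alternatives}) cannot be dispensed with. Part (b), by contrast, is a routine unrolling once the recursion $e_{k+1} \le L_k e_k + \delta$ is in hand.
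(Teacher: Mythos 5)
Your proposal is correct and follows essentially the same route as the paper: the per-step averaged-operator descent inequality anchored at $\x^*_k$, a re-anchoring step paying $\delta(4X+\delta)$ (you obtain it by triangle inequality then squaring, the paper by expanding the square and Cauchy--Schwarz on the cross term $(\x_{k+1}-\x^*_k)^\transp(\x^*_{k+1}-\x^*_k)$ --- these are equivalent and both rest on $\|\x_{k+1}-\x^*_k\|\leq 2X$ from Assumption~\ref{as:bounded}), followed by telescoping for part (a), and the unrolled recursion $e_{k+1}\leq L_k e_k+\delta$ for part (b).
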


Theorem~\ref{theo:1} dictates the convergence properties of the running \Mk algorithm. In the case of bounded $\alpha$-averaged operators, we have weak convergence (in fact, in mean) of the fixed-point residual (FPR) error to a neighborhood of the origin. The size of the neighborhood depends on the bound on the time-variations (Assumption~\ref{as:tv}) and on the size of the image of the operators. If we defined as $\underline{a} := \min_k (1-\alpha_k)/\alpha_k$, then the mean fixed-point residual error approaches asymptotically:
\begin{equation}
\limsup_{T\to\infty} \frac{1}{T}\,\sum_{k=1}^T\, \|\opT_k(\x_k) - \x_k\|^2 = \frac{\delta}{\underline{a}}\,{(4 X + \delta)}.
\end{equation}
When $\delta = 0$, we re-obtain the same results of the time-invariant case. If the operators are contractive, then a better error norm can be proven to be converging. In particular, we have strong convergence of the decision variables $\x_k$ to a fixed point of the operators up to a bound due to the time variations. In the limit, 
\begin{equation}
\limsup_{k\to \infty} \|\x_k - \x^*_k\| = \delta/(1-\bar{L}_k).
\end{equation}

We will study the consequences of this theorem for convex optimization in Section~\ref{sec:opt} and subsequent ones, while in the next section we propose a way to substitute the boundedness requirement of Assumption~\ref{as:bounded}.


\section{Alternative characterization: ``practical'' convergence}\label{sec:alternatives}

Consider Assumption~\ref{as:bounded}. As one can appreciate from the proof of Theorem~\ref{theo:1}, namely~\eqref{dummy:69}, this requirement is needed to lower bound the inner product,
\begin{equation}\label{inner}
(\x_{k+1} - \x^*_{k})^\transp(\x^*_{k+1}- \x^*_k). 
\end{equation}
Another, in fact related, road that can be taken to bound~\eqref{inner} is to bound the variation of the square distances, as encoded in the following Assumption.
\begin{assumption} \label{as:stv}
\emph{(Squared time-variations)}
Let $\{\x_k\}_{k \in \mathbb{N}_{>0}}$ be the sequence generated by the running \Mk algorithm \eqref{tvfixedpoint}. For each time $t_k>0$, there exists a sequence of fixed points $\{\x^*_\tau\}$ from $t_1$ till $t_k$,  and a non-negative scalar $d$, such that, $\x^*_\tau = \fix \opT_\tau$, for all $\tau \in (1, k]$ and 
\begin{equation}\label{horder}
\|\x_{\tau+1} -\x^*_{\tau+1}\|^2 \leq  \|\x_{\tau+1} - \x^*_{\tau}\|^2 + d^2, \, \forall \tau \in (1, k].
\end{equation} 
\end{assumption}

Assumption~\ref{as:stv} (despite being depended on the sequence $\{\x_k\}_{k \in \mathbb{N}_{>0}}$) is a reasonable assumption in many practical situations, e.g., when the optimizer set is bounded and $\x_{\tau+1}$ is not far-away from the optimizer trajectory. In this context, the results that rely on this assumption will be called ``practical'' convergence result.  

By developing the squares, one arrives at a lower boundedness condition on the inner product~\eqref{inner}, so Assumption~\ref{as:stv} de-facto enforces Assumption~\ref{as:bounded}. Requirement~\eqref{horder} can be interpreted also as a bound on the variations of the fixed point sets. We know that if the sets are invariant, then~\eqref{horder} must hold with $d = 0$. When they vary, we need to require that~\eqref{horder} holds. 



\begin{theorem} \emph{(Time-varying \Mk algorithm ``practical'' convergence)}\label{theo:3}
Let $\{\opT_k\}_{k \in \mathbb{N}_{>0}}$ be a sequence of $\alpha_k$-averaged operators from $\mathbb{R}^n \rightrightarrows \mathbb{R}^n$, and assume $\fix\, T_k \neq \emptyset$, for all $k$. Let $\{\x_k\}_{k \in \mathbb{N}_{>0}}$ be the sequence generated by the running \Mk algorithm \eqref{tvfixedpoint}, for the sequence $\{\opT_k\}_{k \in \mathbb{N}_{>0}}$. Let Assumption~\ref{as:stv} hold. 
Then, the error norm $\|\opG_k (\x_k) - \x_k\|$ converges in mean to an error bound as, 
\begin{equation}
\frac{1}{T}\,\sum_{k=1}^T\, \alpha_k (1-\alpha_k)\|\opG_k (\x_k) - \x_k\|^2 \leq \frac{1}{T}{\|\x_1 - \x^*_1\|^2} + d^2.
\end{equation}
\end{theorem}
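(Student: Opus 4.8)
The plan is to reproduce the energy-decrease argument behind the time-invariant bound in Proposition~\ref{prop.Mk}, but with the reference point allowed to drift from one sampling time to the next. The natural Lyapunov quantity is $V_k := \|\x_k - \x^*_k\|^2$, the squared distance of the current iterate to a fixed point of the \emph{current} operator $\opT_k$. First I would establish the standard one-step inequality for a single averaged operator; then I would use Assumption~\ref{as:stv} to re-anchor $V_k$ at the next fixed point, obtaining a recursion that telescopes.

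For the one-step inequality, fix $k$ and pick $\x^*_k \in \fix \opT_k = \fix \opG_k$ (the two fixed-point sets coincide, as noted after~\eqref{eq.alphaav}). Since $\x_{k+1} = \opT_k(\x_k) = \x_k + \alpha_k(\opG_k(\x_k) - \x_k)$, I would expand
\begin{equation}
\|\x_{k+1} - \x^*_k\|^2 = \|\x_k - \x^*_k\|^2 + 2\alpha_k (\x_k - \x^*_k)^\transp(\opG_k(\x_k) - \x_k) + \alpha_k^2 \|\opG_k(\x_k) - \x_k\|^2 .
\end{equation}
Writing $\opG_k(\x_k) - \x^*_k = (\opG_k(\x_k) - \x_k) + (\x_k - \x^*_k)$ and using nonexpansiveness of $\opG_k$ together with $\opG_k(\x^*_k) = \x^*_k$, i.e. $\|\opG_k(\x_k) - \x^*_k\| \leq \|\x_k - \x^*_k\|$, the cross term obeys $2\alpha_k(\x_k - \x^*_k)^\transp(\opG_k(\x_k) - \x_k) \leq -\alpha_k \|\opG_k(\x_k) - \x_k\|^2$. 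Substituting yields the key descent
\begin{equation}\label{plan:onestep}
\|\x_{k+1} - \x^*_k\|^2 \leq \|\x_k - \x^*_k\|^2 - \alpha_k(1-\alpha_k)\|\opG_k(\x_k) - \x_k\|^2 .
\end{equation}

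Now I would invoke Assumption~\ref{as:stv} with $\tau = k$, namely $\|\x_{k+1} - \x^*_{k+1}\|^2 \leq \|\x_{k+1} - \x^*_k\|^2 + d^2$, to replace the left-hand side of~\eqref{plan:onestep} by $V_{k+1}$. This produces the clean recursion $V_{k+1} \leq V_k - \alpha_k(1-\alpha_k)\|\opG_k(\x_k) - \x_k\|^2 + d^2$. Summing over $k = 1,\dots,T$, the $V_k$ telescope, and discarding the nonnegative terminal term $V_{T+1} \geq 0$ leaves $\sum_{k=1}^T \alpha_k(1-\alpha_k)\|\opG_k(\x_k) - \x_k\|^2 \leq \|\x_1 - \x^*_1\|^2 + T d^2$; dividing by $T$ is exactly the claimed bound.

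The argument is short because all the difficulty has been pushed into the hypothesis: the genuine obstacle is that the anchor $\x^*_k$ moves, so the time-invariant inequality~\eqref{plan:onestep} controls $\|\x_{k+1} - \x^*_k\|^2$ rather than the quantity $V_{k+1} = \|\x_{k+1} - \x^*_{k+1}\|^2$ needed to telescope. Assumption~\ref{as:stv} is tailored precisely to bridge this gap with a single additive $d^2$, so that neither compactness (Assumption~\ref{as:bounded}) nor contractivity (Assumption~\ref{as:contraction}) is required. The only point needing care is the index range $\tau \in (1,k]$ over which the assumption is stated, which must cover every step used in the telescoping; any boundary term at $k=1$ is already absorbed into the initial $\|\x_1 - \x^*_1\|^2$.
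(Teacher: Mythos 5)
Your proposal is correct and follows essentially the same route as the paper: the paper proves Theorem~\ref{theo:3} by taking the proof of Theorem~\ref{theo:1}(a) up to the one-step descent inequality $\alpha_k(1-\alpha_k)\|\opG_k(\x_k)-\x_k\|^2 \leq \|\x_k-\x^*_k\|^2 - \|\x_{k+1}-\x^*_k\|^2$ and then substituting the bound~\eqref{horder} for the boundedness-based estimate, exactly as you do. The only cosmetic difference is that you obtain the one-step inequality by expanding the square and bounding the cross term via nonexpansiveness, whereas the paper invokes the triangle-equality identity~\eqref{lemma:eq}; the two computations are algebraically identical.
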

 
Theorem~\ref{theo:3} offers a different view-point from Theorem~\ref{theo:1}, case \emph{(a)}. In both cases convergence of a weighted version of the fixed point residual error goes as $O(1/T)$ up to a bounded error depending on the variability of the fixed points in time.


\section{Asymptotically vanishing ``errors''}\label{sec:vanishing}

In this section, we briefly consider the case in which the operator $\opT(t)$ changes in time, but eventually reaches some steady-state operator $\bar{\opT}$. Although in this paper we are more interested in tracking properties, cases for which the operator reaches a steady-state can be relevant from an application perspective, for example in the online convex optimization framework~\cite{Shalev-Shwartz2012}.

\begin{corollary} \emph{(Running \Mk algorithm convergence for vanishing errors)}\label{theo:5}
Consider point (a) of Theorem~\ref{theo:1} and the modified Assumption~\ref{as:tv} where $\delta$ is now a time-dependent quantity $\delta_k$. If, 
\begin{equation}\label{as.delta}
\lim_{k\to \infty} \sum_{i=1}^k \delta_i < \infty,
\end{equation}
then the fixed-point residual converges strongly to zero, e.g., $\|\opG_k (\x_k) - \x_k\| \to 0$ as $k \to \infty$. For point (b) of Theorem~\ref{theo:1}, if~\eqref{as.delta} holds, then $\|\x_k - \x_k^*\| \to 0$.
\end{corollary}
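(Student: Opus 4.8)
The plan is to re-run the one-step estimates underlying Theorem~\ref{theo:1}, but to keep the per-step variation $\delta_k$ \emph{inside} the summation rather than bounding it crudely by $T\delta(4X+\delta)$, and then to convert the summability hypothesis~\eqref{as.delta} into genuine convergence of the relevant error quantity.

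\emph{Part (a).} From the proof of Theorem~\ref{theo:1}(a), combining the $\alpha$-averaged estimate $\|\opT_k(\x_k)-\x_k^*\|^2\le \|\x_k-\x_k^*\|^2-\alpha_k(1-\alpha_k)\|\opG_k(\x_k)-\x_k\|^2$ with the expansion of $\|\x_{k+1}-\x_{k+1}^*\|^2$, the bound $\|\x_{k+1}-\x_k^*\|\le 2X$ (both $\x_{k+1}$ and $\x_k^*$ lie in $\im\,\opT_k$), and the modified Assumption~\ref{as:tv}, one reaches the per-step inequality
\begin{equation}
\|\x_{k+1}-\x_{k+1}^*\|^2 \le \|\x_k-\x_k^*\|^2 - \alpha_k(1-\alpha_k)\|\opG_k(\x_k)-\x_k\|^2 + \delta_k(4X+\delta_k).
\end{equation}
Summing over $k=1,\dots,T$ and discarding the nonnegative left-hand side gives
\begin{equation}
\sum_{k=1}^T \alpha_k(1-\alpha_k)\|\opG_k(\x_k)-\x_k\|^2 \le \|\x_1-\x_1^*\|^2 + \sum_{k=1}^T \delta_k(4X+\delta_k).
\end{equation}
I would then observe that~\eqref{as.delta} forces $\delta_k\to 0$, so the $\delta_k$ are bounded, say by $B$, whence $\sum_k\delta_k^2\le B\sum_k\delta_k<\infty$ and therefore $\sum_k\delta_k(4X+\delta_k)<\infty$. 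The left-hand partial sums are thus nondecreasing and bounded, so the series converges and its general term tends to zero; assuming $\alpha_k$ bounded away from $0$ and $1$ (i.e.\ $\inf_k\alpha_k(1-\alpha_k)>0$, implicit in the averaged setting) one strips the weight to conclude $\|\opG_k(\x_k)-\x_k\|\to 0$.

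\emph{Part (b).} From the proof of Theorem~\ref{theo:1}(b), setting $e_k:=\|\x_k-\x_k^*\|$, the contractivity of $\opT_k$ together with Assumption~\ref{as:tv} gives the scalar recursion $e_{k+1}\le L_k e_k+\delta_k$. Unrolling it and bounding each factor by $\bar L:=\sup_k L_k<1$ yields
\begin{equation}
e_k \le \hat{L}_k\,e_1 + \sum_{j=1}^{k-1}\bar L^{\,k-1-j}\delta_j.
\end{equation}
The homogeneous term satisfies $\hat{L}_k e_1\le \bar L^{\,k-1}e_1\to 0$. The remaining term is a causal convolution of the geometric kernel $m\mapsto\bar L^{\,m}$ with the $\ell^1$ sequence $\{\delta_j\}$, and the crux is that it vanishes even though $\sum_j\delta_j$ need not itself be small. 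I would argue this by the standard $\varepsilon$--$N$ splitting: given $\varepsilon>0$, choose $N$ with $\sum_{j>N}\delta_j<\varepsilon$; for $k>N+1$ split the sum at $N$, bound the head by $\bar L^{\,k-1-N}\sum_{j\le N}\delta_j\to 0$ and the tail by $\sum_{j>N}\delta_j<\varepsilon$, so that $\limsup_k e_k\le\varepsilon$, and hence $e_k\to 0$.

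The elementary part is (a), which reduces to the fact that a nonnegative series with bounded partial sums converges and therefore has vanishing terms. The genuinely delicate step is the convolution estimate in (b): summability of $\{\delta_j\}$ does not make the convolved sequence summable term-by-term, so one must lean on the geometric decay of the kernel to push the older contributions to zero while controlling the recent ones by the $\ell^1$-tail. A minor loose end worth flagging is that both conclusions implicitly require uniformity of the operator parameters ($\inf_k\alpha_k(1-\alpha_k)>0$ in (a), $\sup_k L_k<1$ in (b)), which I would state explicitly.
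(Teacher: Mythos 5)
Your proof is correct and follows the same route the paper intends: the paper's own proof of this corollary is the single line ``Direct by Theorem~\ref{theo:1}, since~\eqref{as.delta} implies also $\lim_{k\to\infty}\sum_{i=1}^k\delta_i^2<\infty$'', i.e., it invokes exactly the two summability facts you use. Your write-up merely supplies the details that one-liner leaves implicit --- the per-step telescoping inequality with $\delta_k(4X+\delta_k)$ kept inside the sum for part (a), and the $\varepsilon$--$N$ splitting of the geometric convolution $\sum_j \bar{L}^{\,k-1-j}\delta_j$ for part (b) --- and your observation that one implicitly needs $\inf_k \alpha_k(1-\alpha_k)>0$ (resp.\ $\sup_k L_k<1$) is a legitimate point that neither the corollary's statement nor the paper's proof makes explicit.
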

\begin{proof}
Direct by Theorem~\ref{theo:1}, since~\eqref{as.delta} implies also $\lim_{k\to \infty} \sum_{i=1}^k \delta_i^2 < \infty$.
\qed
\end{proof}
 

\section{Connections with existing work}\label{sec:literature}

\Mk iteration has been studied extensively and we do not have the ambition here to give an exhaustive account of all the results that have appeared, since the main aim of this paper is its connection with time-varying optimization (rather than an improvement of the \Mk iteration itself). However, it is relevant to briefly report connections with existing works in relation to the time-varying version of the \Mk iteration.

For a general \Mk iteration account, besides the standard reference~\cite{Bauschke2011}, interested reader could also find a complete analysis of \Mk iteration with various error conditions in~\cite{Combettes2002}, which also provide the notion of convergence with overrelaxed parameters, i.e., $\alpha \in (0,2)$. Tighter bounds are provided in~\cite{Davis2014}, while for recent results and surveys on splitting methods see~\cite{Eckstein1989, Ryu2015, Frankel2015, Bianchi2016, Garrigos2017}. Rather recently, various linear convergence results similar to~\eqref{linear} have appeared without requiring the operator to be a contraction~\cite{Bauschke2015,Banjac2016}. In particular, Result~\eqref{linear} holds iff the $\alpha$-averaged operator $\opT$ is linearly bounded, i.e., 
\begin{equation}
\|\x - \x^*\| \leq \kappa \|\x - \opT(\x)\|, \quad \forall \x\in\mathbb{R}^n, \, \kappa\geq0. 
\end{equation}
A contraction is a linearly bounded operator but not vice versa (and, in practice, it is not completely straightforward to make sure that an operator is linearly bounded, besides the case in which is a contraction). This convergence result has strong links with recent relaxed versions of strong convexity~\cite{Necoara2015}.
Other regularity assumptions to allow for decision variable convergence are explored in~\cite{Borwein2015}, while acceleration of the \Mk iteration to super-linear convergence is explored in~\cite{Themelis2016}. Most of the aforementioned works frame their contributions in Hilbert and even Banach spaces, while here (for simplicity) we restrict ourselves to $\mathbb{R}^n$. 

When one is concern with convergence of the sequence $\{
\x_k\}_{k \in \mathbb{N}_{>0}}$ towards a fixed point of the time-invariant $\opT$ (or equivalently $\opG$), one would like to establish the strong convergence of the residuals $\|\x_k - \opG(\x_k)\| \to 0$, a property referred to as asymptotic regularity. An explicit estimate for the residual is available~\cite{Cominetti2014, Bravo2016}, as
\begin{equation}
\|\opG(\x_k)-\x_k\| \leq \frac{\textrm{diam}(\mathcal{X})}{\sqrt{\pi k \alpha (1-\alpha)}},
\end{equation}
where $\mathcal{X}$ is the bounded image set of $\opG$, while the constant $1/\sqrt{\pi}$ is tight. 

If one allows for errors in the computation of the operator, then could consider the inexact \Mk iteration as 
\begin{equation}\label{ikm}
\x_{k+1} = \opT(\x_k) + \alpha\e_k,
\end{equation}
where $\e_k \in \mathbb{R}^n$ is an error vector, supposed bounded as $\|\e_k\|\leq \epsilon_k$. A variety of results have appeared to characterize convergence of~\eqref{ikm}, see for example~\cite{Combettes2002,Bravo2017} and reference therein. The main point in the aforementioned work is that the fixed point set is time-invariant but we commit errors at every discrete time steps. Then, under rather mild assumptions (which are verified if the operator has a bounded image set) one can show that, 
\begin{equation}\label{ikm.result}
\|\opG(\x_k)-\x_k\|\leq \Omega(\kappa,\alpha, \{\epsilon_i\}_{i \in \{1,\dots,k+1\}}),
\end{equation}
where $\Omega$ is a function of $\alpha$, on a bound on $\mathcal{X}$ here indicated as $\kappa$, and the error vectors bounds $\epsilon_i$, and it is bounded, whenever $\epsilon_i$ is bounded. If $\sum_{i}{\epsilon_i}< \infty$ then $\|\opG(\x_k)-\x_{k} \| \to 0$. These results are similar to the ones in Corollary~\ref{theo:5}.

Diagonal \Mk iterations~\cite{Zhao2005,Xu2006,Peypouquet2009} have also appeared -- where at each iteration one purposely chooses to use a different, perhaps easier to compute, operator $\opT_{k}$ --  and as indicated by~\cite{Bravo2017}, these methods can be interpreted as inexact iterations where $\alpha \e_k = \opT_{k}(\x_k) - \opT(\x_k)$. 

To the best of the author's knowledge, no inexact method have been appeared to tackle the case in which the operator $\opT$ and its fixed point set is time-varying, as we study here. 

\Mk iterations have strong connections with evolution equations of the form
\begin{equation}\label{ode}
\frac{\textrm{d} u(t)}{\textrm{d} t} + (\opI - \opG) u(t)  = f(t), \quad u(0) = x_0.
\end{equation}
In fact, by discretizing~\eqref{ode} with a forward-Euler method of fixed time period $h<1$, one obtain the recursion
\begin{equation}
\frac{u_{k+1}-u_k}{h} + (\opI - \opG) u_k  = f_k, \quad u_0 = x_0,
\end{equation}
or
\begin{equation}\label{mk_ode}
u_{k+1} = ((1-h)\opI  + h \opG) u_k  - h f_k, \quad u_0 = x_0,
\end{equation}
which is an inexact \Mk iteration, whenever $\opG$ is nonexpansive. Characterizations of convergence of~\eqref{mk_ode} when $f_k$ is bounded and asymptotically vanishing are also appeared in the literature, e.g.,~\cite{Bravo2017}. A survey of some recent results and connections between continuous and discrete case can be found in~\cite{Peypouquet2010}, which mainly focus on monotone operators and existence of solutions. 

Time-varying fixed point sets and operators, as in our case, can be derived instead from the evolution equation
\begin{equation}\label{ode_2}
\frac{\textrm{d} u(t)}{\textrm{d} t} + (\opI - \opG(t)) u(t)  = 0, \quad u(0) = x_0.
\end{equation}
which has been studied considerably less in the context of \Mk iterations. A pioneer work is the one by J.~J. Moreau~\cite{Moreau1977}, which studies a particular~\eqref{ode_2} in the context of moving convex sets and he proposes a running \Mk algorithm in the line of~\eqref{tvfixedpoint}, which he names \emph{catching-up} algorithm (and whose error w.r.t. the continuous solution is proven bounded if the sampling period is bounded). More recently, the results in~\cite{Briceno-Arias2016} offers a broader perspective on equations (and differential inclusions~\cite{Dontchev1992}) of the type of~\eqref{ode_2}, when $(\opI - \opG(t))$ is maximally monotone. The focus is again on the property of the continuous solution and its (consistent) discrete approximation. Finally, the works~\cite{Cojocaru2005,Nagurney2006} discuss evolution variational inequalities (EVI) and a discrete running algorithm in the line of~\eqref{tvfixedpoint} is presented, whose convergence is proven under strong monotonicity assumptions.  We feel that promising future research directions lie in the line of research put forth by~\cite{Cojocaru2005,Nagurney2006,Briceno-Arias2016}.

\section{Consequences for Time-varying Convex Optimization}\label{sec:opt}

A number of corollaries can now be derived based on the result of Theorem~\ref{theo:1} (we will not consider Theorem~\ref{theo:3} here, yet its application would be direct), which are summarized in Table~\ref{tab.1}. In Table~\ref{tab.2}, we report additional results in terms of objective convergence, which will be obtained in Section~\ref{sec:obj_conv}. 

In order to prove some of the results, we need the following standard lemma, reported here for simplicity. 

\begin{lemma}\label{lemma:scsm}\cite{Rockafellar1970, Taylor2017}
Let $f:\mathbb{R}^n \to \mathbb{R}$ be a CCP function. Let $\mathcal{X}\subseteq \mathbb{R}^n$ be a closed convex set. Let $i_{\mathcal{X}}(\x)$ be the indicator function, which is $0$ for $\x \in \mathcal{X}$ and $+\infty$ otherwise. Consider the extended valued function $f_\mathrm{e}:\mathbb{R}^n \to \mathbb{R}\cup\{\infty\} = f + i_{\mathcal{X}}$. For $f_\mathrm{e}$ the following facts are true. 
\begin{enumerate}
\item[\emph{i)}] The subgradient operators of $f_e$ and of its conjugate are reciprocal of each other: $\partial f_e^{-1} = \partial \cf_e$;
\item[\emph{ii)}] If $f$ is strongly convex with constant $m$ over $\mathbb{R}^n$, then $\cf_e$ is strongly smooth with constant $1/m$.  
\item[\emph{iii)}] If $f$ is strongly smooth with constant $M$ over $\mathbb{R}^n$, and $\mathcal{X} = \mathbb{R}^n$, then $\cf_e$ is strongly convex with constant $1/M$.  
\end{enumerate}
\end{lemma}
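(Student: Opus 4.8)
The plan is to handle all three items through a single mechanism: the inverse relationship between the subdifferential of a CCP function and that of its conjugate, combined with the standard operator-theoretic dictionary relating strong monotonicity, cocoercivity, and Lipschitz continuity. First I would check that $f_{\mathrm{e}} = f + i_{\mathcal{X}}$ is itself CCP, which is the prerequisite for everything that follows: it is proper whenever $\mathrm{dom}\,f \cap \mathcal{X} \neq \emptyset$, convex as a sum of convex functions, and closed since $i_{\mathcal{X}}$ is closed (because $\mathcal{X}$ is closed) and $f$ is closed. With $f_{\mathrm{e}}$ CCP, item \emph{i)} is the classical Fenchel inversion rule: from the Fenchel--Young equality $f_{\mathrm{e}}(\x) + \cf_{\mathrm{e}}(\y) = \y^\transp \x$ one reads off that $\y \in \partial f_{\mathrm{e}}(\x) \iff \x \in \partial \cf_{\mathrm{e}}(\y)$, that is $\partial \cf_{\mathrm{e}} = (\partial f_{\mathrm{e}})^{-1}$, exactly the stated reciprocity. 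This step I would simply cite from \cite{Rockafellar1970}.

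For item \emph{ii)} I would first observe that $f_{\mathrm{e}}$ inherits the $m$-strong convexity of $f$: since $i_{\mathcal{X}}$ is convex, $f_{\mathrm{e}} - \tfrac{m}{2}\|\cdot\|^2 = (f - \tfrac{m}{2}\|\cdot\|^2) + i_{\mathcal{X}}$ is a sum of convex functions, hence convex, so $\mathcal{X}$ arbitrary is no obstruction here. Strong convexity with modulus $m$ is equivalent to $m$-strong monotonicity of $\partial f_{\mathrm{e}}$, namely $(\uu - \v)^\transp(\x - \y) \geq m\|\x - \y\|^2$ whenever $\uu \in \partial f_{\mathrm{e}}(\x)$ and $\v \in \partial f_{\mathrm{e}}(\y)$. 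Inverting this inequality through item \emph{i)} turns $m$-strong monotonicity of $\partial f_{\mathrm{e}}$ into $m$-cocoercivity of $\partial \cf_{\mathrm{e}} = (\partial f_{\mathrm{e}})^{-1}$; setting the two arguments equal forces single-valuedness, and a Cauchy--Schwarz step on the cocoercivity inequality yields $(1/m)$-Lipschitz continuity. A single-valued, $(1/m)$-Lipschitz subdifferential means $\cf_{\mathrm{e}}$ is differentiable with $(1/m)$-Lipschitz gradient, i.e. strongly smooth with constant $1/m$.

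Item \emph{iii)} is the dual statement, and this is where the hypothesis $\mathcal{X} = \mathbb{R}^n$ is essential: it forces $i_{\mathcal{X}} \equiv 0$ so that $f_{\mathrm{e}} = f$ is genuinely strongly smooth, whereas the indicator of a proper subset would destroy differentiability and the argument would collapse. Strong smoothness of $f$ with constant $M$ means $\nabla f$ is $M$-Lipschitz, and by the Baillon--Haddad theorem a convex function with $M$-Lipschitz gradient has a $(1/M)$-cocoercive gradient. Inverting once more via item \emph{i)}, the inverse of a $(1/M)$-cocoercive operator is $(1/M)$-strongly monotone, so $\partial \cf_{\mathrm{e}} = (\nabla f)^{-1}$ is $(1/M)$-strongly monotone, which is equivalent to $\cf_{\mathrm{e}}$ being strongly convex with constant $1/M$.

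The routine parts are the CCP verification and the bookkeeping of the reciprocal constants ($m \mapsto 1/m$, $M \mapsto 1/M$). The main obstacle, and really the only substantive ingredient, is the operator-theoretic inversion dictionary: that the inverse of an $m$-strongly monotone operator is $m$-cocoercive (hence single-valued and $(1/m)$-Lipschitz), and conversely that the inverse of a $(1/M)$-cocoercive operator is $(1/M)$-strongly monotone. I would invoke these equivalences directly from the monotone-operator literature \cite{Bauschke2011}, or, if a self-contained treatment is preferred, derive them by expanding the defining inequalities and applying Cauchy--Schwarz, taking care that the differentiability needed to write $\partial \cf_{\mathrm{e}} = \nabla \cf_{\mathrm{e}}$ is precisely what cocoercivity together with single-valuedness delivers.
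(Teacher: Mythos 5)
The paper does not prove this lemma at all: it is stated as a standard fact and deferred to the cited references \cite{Rockafellar1970, Taylor2017}, so there is no in-paper argument to compare against. Your proof is the standard conjugate-duality argument those references contain (Fenchel inversion for item \emph{i)}, strong monotonicity of $\partial f_{\mathrm{e}}$ passing to cocoercivity of its inverse for item \emph{ii)}, and Baillon--Haddad plus inversion for item \emph{iii)}), the bookkeeping of the constants is right, and you correctly identify why $\mathcal{X}=\mathbb{R}^n$ is needed only in item \emph{iii)}; the argument is correct and complete up to the routine observation that strong convexity of $f_{\mathrm{e}}$ forces $\cf_{\mathrm{e}}$ to have full domain, which is what upgrades the single-valued Lipschitz subdifferential to a genuine everywhere-defined gradient.
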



\begin{table}
\caption{Results presented in Sections~\ref{sec:opt}-\ref{sec:opt.b} based on Theorem~\ref{theo:1}. The same results hold for Theorem~\ref{theo:3} by substituting all $\subset$ with $\subseteq$ and without the need for the bounding set $\mathcal{B}$.}
\centering
\begin{tabular}{ccc}
\toprule
Method & Corollary & Result for FPR and variable convergence\\ \toprule
Proj. gradient & \ref{co:gradient}(a)& $f$ strongly smooth, $\mathcal{X}_k \subset \mathbb{R}^n$ \\ 
& \ref{co:gradient}(b)& $f$ strongly smooth and strongly convex, $\mathcal{X}_k \subseteq \mathbb{R}^n$\\ \hline
Proximal point & \ref{co:pp}(a) & $f$ CCP, $\mathcal{X}_k \subset \mathbb{R}^n$ \\
& \ref{co:pp}(b)  & $f$ strongly convex, $\mathcal{X}_k \subseteq \mathbb{R}^n$ \\ \hline
F-B splitting & \ref{co:fb}(a)& $g$ CCP, $f$ strongly smooth, $\mathcal{X}_k \subset \mathbb{R}^n$ \\ 
& \ref{co:fb}(b)& \hskip-.5cm$g$ CCP, $f$ strongly smooth and strongly convex, $\mathcal{X}_k \subseteq \mathbb{R}^n$ \\\hline
Dual ascent ineq.  & \ref{co:da}(a)& $f$ strongly convex, $\mathcal{X} \subseteq \mathbb{R}^n$ \\
& \ref{co:da}(b)& \hskip-.5cm$f$ strongly smooth and strongly convex, $\mathcal{X}  = \mathbb{R}^n$, $\sigma_{\min} > 0$\\ \hline
Dual ascent eq. 1  & Eq. \eqref{tvdual2} & Same as dual ascent ineq. with $\p \in \mathcal{B}$ \\ \hline
Dual ascent eq. 2 & \ref{co:daeq} & \hskip-.5cm$f$ strongly smooth and strongly convex, $\mathcal{X}  = \mathbb{R}^n$
 \\ \hline
D-R splitting & \ref{co:dr}(a)& $f,g$ CCP, $\mathcal{X}_k \subseteq \mathbb{R}^n$,  $\z \in \mathcal{B}$  \\  & \ref{co:dr}(b)& \hskip-.5cm$g$ CCP, $f$ strongly smooth and strongly convex, $\mathcal{X}_k = \mathbb{R}^n$ \\ \hline
ADMM & Eq. \eqref{tvadmm}& $f,g$ CCP, $\mathcal{X}_k \subseteq \mathbb{R}^n$,  $\p \in \mathcal{B}$  \\  & \ref{co:admm}& \hskip-.5cm$g$ CCP, $f$ str. smooth and str. convex, $\mathcal{X}_k = \mathbb{R}^n$, $\sigma_{\min} > 0$ \\ \bottomrule
\end{tabular}
\label{tab.1}
\end{table}



\begin{table}
\caption{Results presented in Section~\ref{sec:obj_conv} based on Theorem~\ref{theo:1} and Lemma~\ref{lemma.joint}.}
\centering
\begin{tabular}{cccc}
\toprule
Method & Result & & Result for objective convergence\\ \toprule
Proj. gradient & Corollary~\ref{co:gr-obj}& & $f$ strongly smooth, $\mathcal{X}_k = \mathcal{X} \subset \mathbb{R}^n$ \\ \hline
Proximal point & Corollary~\ref{co:pp-obj} & & $f$ CCP, $\mathcal{X}_k = \mathcal{X} \subset \mathbb{R}^n$ \\ \hline
F-B splitting & Proposition~\ref{co:fb_obj}& & $g$ CCP, $f$ strongly smooth, $\mathcal{X}_k = \mathcal{X} \subset \mathbb{R}^n$ \\ \bottomrule
\end{tabular}
\label{tab.2}
\end{table}


\subsection{Gradient method}

First of all, consider the time-varying convex optimization problem
\begin{equation}
\minimize_{\x \in \mathcal{X}(t)}\, f(\x; t)
\end{equation}
with uniformly CCP function $f: \mathbb{R}^n\times \mathbb{R}_{+} \to \mathbb{R}$, and uniformly convex set $\mathcal{X}(t) \subseteq \mathbb{R}^n$. Sample the problem for sampling times $t_k, k =1,2,\dots$ and solve the equivalent
\begin{equation}\label{zeros}
\minimize_{\x \in \mathcal{X}_k}\, f(\x; t_k) = f_k(\x) \quad \iff \quad \textrm{find} \,\, \zer (\partial f_k + N_{\mathcal{X}_k}),
\end{equation} 
where $N_{\mathcal{X}_k}$ is the normal cone operator for $\mathcal{X}_k$. Finding the zeroes of the operator on the right is equivalent of finding the fixed points of the composition: 
\begin{equation}
\Pi_{\mathcal{X}_k}(\opI - \lambda \partial f_k)
\end{equation}
where $\Pi_{\mathcal{X}_k}$ is the projection operator~\cite{Eckstein1989}. It is not difficult to see that the running projected gradient algorithm, 
\begin{enumerate}
\item[1.] Set $\x_1$ arbitrarily,
\item[2.] \textbf{for} $k>1$ \textbf{do}:

Compute the next approximate fixed point
\begin{equation}\label{tvprojgradient}
\x_{k+1} = \Pi_{\mathcal{X}_k}(\opI - \lambda \partial f_k)
(\x_k)
\end{equation}
\end{enumerate}
is a special case of the running \Mk algorithm. 

\begin{corollary}\label{co:gradient}
Consider the running projected gradient defined in~\eqref{tvprojgradient} and the generated sequence $\{\x_k\}_{k \in \mathbb{N}_{>0}}$. Let $f_k$ be differentiable for all $k$ (i.e., $\partial f_k = \nabla f_k$) and be strongly smooth with constant $M_k\leq M$ over $\mathbb{R}^n$. Fix $\lambda \in (0, 2/M)$ so that the operator $\Pi_{\mathcal{X}_k}(\opI - \lambda \nabla f_k)$ is $\alpha$-averaged with $\alpha= \alpha_k$. Let Assumption~\ref{as:tv} hold.
\begin{enumerate}
\item[(a)] Let the sets $\mathcal{X}_k$ be compact, and therefore bounded, and let $X$ be defined as $X = \max_{k} \|\mathcal{X}_k\|$.  Define $\underline{a}:= \min_k (1-\alpha_k)/\alpha_k$. Then, the sequence $\{\x_k\}_{k \in \mathbb{N}_{>0}}$ converges in the sense of~\eqref{result1} and in particular,
\begin{equation}\label{result1.1}
\frac{1}{T}\,\sum_{k=1}^T\, \|\Pi_{\mathcal{X}_k}(\x_k - \lambda \nabla f_k(\x_k))   - \x_k\|^2 \leq \frac{1}{\underline{a} T}{\|\x_1 - \x^*_1\|^2} + \frac{\delta}{\underline{a}} \,(4 X + \delta );
\end{equation} 
\item[(b)] Let the functions $f_k$ be strongly convex over $\mathbb{R}^n$, for all $k$, with constant $m_k$. Then, the operator $\Pi_{\mathcal{X}_k}(\opI - \lambda \nabla f_k)$ is a contraction with $L_k = \max\{|1-\lambda m_k|,|1-\lambda M_k|\}$ and we obtain primal convergence in the sense of~\eqref{result2}: 
\begin{equation}\label{result1.2}
\|\x_k - \x_k^*\| \leq \hat{L}_{k}\,\|\x_1 - \x_1^*\| + \frac{1-\bar{L}^{k-1}_k}{1-\bar{L}_k} \delta, \quad L_k = \max\{|1-\lambda m_k|,|1-\lambda M_k|\}.
\end{equation}
\end{enumerate}
\end{corollary}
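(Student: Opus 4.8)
The plan is to observe that the running projected gradient iteration~\eqref{tvprojgradient} is precisely the running \Mk algorithm~\eqref{tvfixedpoint} applied to the sequence of operators $\opT_k = \Pi_{\mathcal{X}_k}(\opI - \lambda \nabla f_k)$, and then to verify that this particular $\opT_k$ satisfies the hypotheses of Theorem~\ref{theo:1}. Once that verification is in place, both claims follow by specializing the bounds of that theorem; no new convergence machinery is needed.

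For part (a), I would first certify that each $\opT_k$ is $\alpha_k$-averaged. The preliminaries already establish that $\opI - \lambda \nabla f_k$ is $\alpha$-averaged with constant $\lambda M_k/2$ whenever $\lambda \in (0, 2/M_k)$; since $M_k \le M$ we have $(0, 2/M) \subseteq (0, 2/M_k)$, so the stated step size is admissible and the constant $\lambda M_k/2$ lies in $(0,1)$. The projection $\Pi_{\mathcal{X}_k}$ onto the closed convex set $\mathcal{X}_k$ is firmly nonexpansive, hence $1/2$-averaged, so invoking the composition rule of Proposition~\ref{composition} with $\alpha_1 = 1/2$ and $\alpha_2 = \lambda M_k/2$ yields the averagedness of $\opT_k$ together with an explicit $\alpha_k$. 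To meet Assumption~\ref{as:bounded}, I would note that $\opT_k(\x_k) = \Pi_{\mathcal{X}_k}(\,\cdot\,) \in \mathcal{X}_k$ for every argument, so $\im \opT_k \subseteq \mathcal{X}_k$ is compact and $\max_{k,\x_k}\|\opT_k(\x_k)\| \le \max_k \|\mathcal{X}_k\| = X$. Theorem~\ref{theo:1}(a) then applies, and the bound~\eqref{result1.1} is obtained from~\eqref{result1bis} by identifying $\opT_k(\x_k) = \Pi_{\mathcal{X}_k}(\x_k - \lambda \nabla f_k(\x_k))$, lower-bounding the weight $\tfrac{1-\alpha_k}{\alpha_k}$ by $\underline{a} = \min_k \tfrac{1-\alpha_k}{\alpha_k}$, pulling it out of the sum, and dividing through by $\underline{a}$.

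For part (b), the additional strong convexity of $f_k$ lets me upgrade the averagedness to a contraction: by~\eqref{grlinear}, $\opI - \lambda \nabla f_k$ is a contraction with constant $L_k = \max\{|1-\lambda m_k|,|1-\lambda M_k|\}$, and since $\Pi_{\mathcal{X}_k}$ is nonexpansive the composition inherits the very same Lipschitz constant, so $\opT_k$ is a contraction with parameter $L_k$ and Assumption~\ref{as:contraction} holds. Estimate~\eqref{result1.2} is then just~\eqref{result2} of Theorem~\ref{theo:1}(b) specialized to this operator.

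This corollary is a specialization rather than a genuinely new result, so there is no deep obstacle; the only points demanding care are making sure $\lambda \in (0,2/M)$ with $M_k \le M$ keeps each gradient-step operator averaged with a constant in $(0,1)$, and correctly combining the firm nonexpansiveness of the projection with Proposition~\ref{composition} to produce a single averagedness constant $\alpha_k$. The remaining bound manipulations are routine.
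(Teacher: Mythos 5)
Your proposal is correct and follows essentially the same route as the paper's own proof: establish averagedness of $\Pi_{\mathcal{X}_k}(\opI-\lambda\nabla f_k)$ via Proposition~\ref{composition} with $\alpha_1=1/2$ and $\alpha_2=\lambda M_k/2$, verify Assumption~\ref{as:bounded} through the compactness of $\mathcal{X}_k$, and then specialize Theorem~\ref{theo:1} (using~\eqref{result1bis} with the weight bounded below by $\underline{a}$ for part (a), and the contraction argument with the nonexpansive projection for part (b)). The only cosmetic difference is that the paper records the explicit value $\alpha_k = 1/(2-\lambda M_k/2)$, which your composition step yields as well.
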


\begin{proof}
Case \emph{(a)}. The sequence of operators $\{\opT_k\} = \{\Pi_{\mathcal{X}_k}(\opI - \lambda \nabla f_k)\}$ verifies Assumption~\ref{as:bounded}. With $\lambda \in (0,2/M)$, the operator $\opI - \lambda \nabla f_k$ is $\alpha$-averaged with $\alpha = \lambda M_k/2$, while the projection operator is $\alpha$-averaged with $\alpha = 1/2$, \cite{Ryu2015}. Therefore their composition is $\alpha$-averaged according to Proposition~\ref{composition} with $\alpha$ constant equal to $\alpha_k = 1/(2-\lambda M_k/2)$. By applying~\eqref{result1bis}, result~\eqref{result1.1} follows.

Case \emph{(b)}. Contraction of the operator $\{\Pi_{\mathcal{X}_k}(\opI - \lambda \nabla f_k)\}$ follows from the fact that, when $f(\x; t_k)$ is strongly convex and strongly smooth, then $\opI - \lambda \nabla f_k$ is a contraction for $\lambda \in (0, 2/M)$, \cite{Ryu2015}, and in particular the contraction factor is $L_k = \max\{|1-\lambda m_k|,|1-\lambda M_k|\}$. Composition of a contraction and a non-expansive operator $\Pi_{\mathcal{X}_k}$ is still a contraction with the same contraction factor. Therefore the sequence of operators $\{\Pi_{\mathcal{X}_k}(\opI - \lambda \nabla f_k)\}$ verifies Assumption~\ref{as:contraction} and result~\eqref{result1.2} follows.
\qed
\end{proof}

\vskip2mm

The fixed-point residual $\|\Pi_{\mathcal{X}_k}(\x_k - \lambda \nabla f_k(\x_k))   - \x_k\|$ in result~\eqref{result1.1} is also known as global error estimate~\cite{Necoara2015}. Note that preliminary results for convergence of time-varying projected gradient have appeared in~\cite{Simonetto2014c} albeit with a different analysis technique. 

Before proceeding, it is interesting to revisit the need for Assumption~\ref{as:bounded} (and equivalently Assumption~\ref{as:stv}) by using standard proof tools of the gradient method. For strongly smooth functions, in the time-invariant setting $f(\x)$ is a Lyapunov function for the gradient method. In fact, it is $f(\x)\geq 0$ for all $\x$ w.l.g., and for strong smoothness:
\begin{align}
f(\x_{k+1}) &= f(\x_{k}- \alpha \nabla f(\x_k)) \leq f(\x_{k}) - \alpha \|\nabla f(\x_k)\|^2 + \frac{\alpha M}{2}\|\nabla f(\x_k)\|^2 = \nonumber \\
 &\leq f(\x_k) - \alpha(1-\alpha M/2) \|\nabla f(\x_k)\|^2, \label{eq.gradlyap}
\end{align}
where $M$ is the strong smoothness constant and $\alpha < 2/M$ is the stepsize. By~\eqref{eq.gradlyap}, the gradient method converges to the optimum. When the function $f$ is time-varying then,
\begin{equation}
f_k(\x_{k+1}) \leq f_k(\x_k) - \alpha(1-\alpha M/2) \|\nabla f_k(\x_k)\|^2,
\end{equation}
and therefore
\begin{equation}
f_{k+1}(\x_{k+1}) \leq f_k(\x_k) - \alpha(1-\alpha M/2) \|\nabla f_k(\x_k)\|^2 + (f_{k+1}(\x_{k+1})-f_{k}(\x_{k+1})).
\end{equation}
Assuming w.l.g. that the optimum of $f_{k+1}$ is the same as the optimum of $f_k$ and they are both $0$, the value for $f_{k+1}(\x_{k+1})$ can be at most $M/2 \|\x_{k+1} - \x^*_{k+1}\|^2$ and the one for $f_{k}(\x_{k+1})$ can be at most $M/2 \|\x_{k+1} - \x^*_{k}\|^2$. If one looks at the minimax error for every iterations, one has 
\begin{equation}
\min_{f_{k}} \max_{f_{k+1}} f_{k+1}(\x_{k+1})-f_{k}(\x_{k+1}) = M/2 (\|\x_{k+1} - \x^*_{k+1}\|^2 - \|\x_{k+1} - \x^*_{k}\|^2).
\end{equation} 
The error term is exactly $d^2\,M/2$ in~\eqref{horder}, or requires Assumption~\ref{as:bounded} to be bounded.

\subsection{Proximal-point method}

Another equivalent rewriting of finding the zeroes of the operator in the rightmost term of~\eqref{zeros} is finding the set
\begin{equation}\label{proxdef}
\fix\, (I+\lambda (\partial f_k + N_{\mathcal{X}_k}))^{-1} =: \fix \, \opR_k. 
\end{equation}
The operator $\opR_k$ is the called the resolvent of the operator $\partial f_k + N_{\mathcal{X}}$. Since the latter is a maximal monotone operator for all CCP functions $f_k$, then $\opR_k$ is an $\alpha$-averaged operator. The resulting running algorithm is the running proximal point method, 
\begin{enumerate}
\item[1.] Set $\x_1$ arbitrarily,
\item[2.] \textbf{for} $k>1$ \textbf{do}:

Compute the next approximate fixed point
\begin{equation}\label{tvprox}
\x_{k+1} = \opR_k
(\x_k) = \arg\min_{x \in \mathcal{X}_k} \left\{f_k(\x) + \frac{1}{2 \lambda}\|\x - \x_{k}\|^2\right\} = \prox_{f_k,\mathcal{X}_k,\lambda}(\x_k),
\end{equation}
\end{enumerate}
whose convergence goes as follows. 

\begin{corollary}\label{co:pp}
Consider the running proximal point defined in~\eqref{tvprox} and the generated sequence $\{\x_k\}_{k \in \mathbb{N}_{>0}}$. Let Assumption~\ref{as:tv} hold.
\begin{enumerate}
\item[(a)] Let the sets $\mathcal{X}_k$ be compact, and therefore bounded, and let $X$ be defined as $X = \max_{k} \|\mathcal{X}_k\|$.  Then, the sequence $\{\x_k\}_{k \in \mathbb{N}_{>0}}$ converges in the sense of~\eqref{result1} and in particular,
\begin{equation}\label{result2.1}
\frac{1}{T}\,\sum_{k=1}^T\, \|\prox_{f_k,\mathcal{X}_k,\lambda}(\x_k) 
- \x_k\|^2 \leq \frac{1}{T}{\|\x_1 - \x^*_1\|^2} +  \delta\,(4 X + \delta);
\end{equation} 
\item[(b)] Let the functions $f_k$ be strongly convex over $\mathbb{R}^n$, for all $k$, with constant $m_k$. Then the operator $\opR_k$ is a contraction with $L_k = (1+m_k\lambda)^{-1}$ and we obtain primal convergence in the sense of~\eqref{result2}: 
\begin{equation}\label{result2.2}
\|\x_k - \x_k^*\| \leq \hat{L}_{k}\,\|\x_1 - \x_1^*\| + \frac{1-\bar{L}^{k-1}_k}{1-\bar{L}_k} \delta, \quad L_k = (1+m_k\lambda)^{-1}.
\end{equation}
\end{enumerate}
\end{corollary}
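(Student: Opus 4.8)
The plan is to treat Corollary~\ref{co:pp} as a direct specialization of Theorem~\ref{theo:1} to the sequence of resolvent operators $\{\opR_k\}$, exactly as the proof of Corollary~\ref{co:gradient} specializes it to the projected-gradient operators. Since the running proximal-point iteration~\eqref{tvprox} is literally the running \Mk algorithm~\eqref{tvfixedpoint} applied to $\opT_k = \opR_k$, the only real work is to identify the averagedness/contraction parameters of $\opR_k$ and to verify that the relevant assumption holds (Assumption~\ref{as:bounded} for case (a), Assumption~\ref{as:contraction} for case (b)); the stated conclusions then follow by substituting into~\eqref{result1bis} and~\eqref{result2}.

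For part (a), first I would invoke the fact recalled just after~\eqref{proxdef}: for every CCP $f_k$ the operator $\partial f_k + N_{\mathcal{X}_k}$ is maximal monotone, so its resolvent $\opR_k$ is $\tfrac12$-averaged, i.e.\ $\alpha_k = 1/2$ with $\opG_k = 2\opR_k - \opI$. Next I would observe that $\opR_k(\x_k) = \argmin_{\x\in\mathcal{X}_k}\{\cdots\}$ always returns a point of $\mathcal{X}_k$, hence $\im\,\opR_k = \mathcal{X}_k$, which is bounded with $\|\opR_k(\x_k)\|\le X$ by hypothesis. Thus Assumption~\ref{as:bounded} is satisfied with $\alpha_k = 1/2$ and this $X$, and since $(1-\alpha_k)/\alpha_k = 1$, the bound~\eqref{result1bis} collapses to exactly~\eqref{result2.1}.

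For part (b), I would use that strong convexity of $f_k$ with constant $m_k$ makes $\partial f_k$ an $m_k$-strongly monotone operator; adding the monotone normal-cone operator $N_{\mathcal{X}_k}$ preserves the constant, so $\partial f_k + N_{\mathcal{X}_k}$ is still $m_k$-strongly monotone. The one genuine computation is then the Lipschitz constant of its resolvent: writing $\y_i = \opR_k(\x_i)$ so that $\x_i - \y_i \in \lambda(\partial f_k + N_{\mathcal{X}_k})(\y_i)$, strong monotonicity gives $(\x_1-\x_2)^\transp(\y_1-\y_2) \ge (1+\lambda m_k)\|\y_1-\y_2\|^2$, and Cauchy--Schwarz yields $\|\y_1-\y_2\|\le (1+\lambda m_k)^{-1}\|\x_1-\x_2\|$. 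Hence each $\opR_k$ is a contraction with $L_k = (1+\lambda m_k)^{-1}$, Assumption~\ref{as:contraction} holds, and Theorem~\ref{theo:1}(b) delivers~\eqref{result2.2} verbatim.

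The steps are largely routine, so I do not anticipate a serious obstacle; the two points needing care are (i) confirming in part (a) that the prox output genuinely lands in $\mathcal{X}_k$, so that the uniform bound $X = \max_k\|\mathcal{X}_k\|$ legitimately plays the role of $\max_{k,\x_k}\|\opT_k(\x_k)\|$ in Assumption~\ref{as:bounded}, and (ii) checking in part (b) that appending the normal cone $N_{\mathcal{X}_k}$ does not weaken the strong-monotonicity modulus below $m_k$, which is precisely what keeps the contraction factor at the clean value $(1+\lambda m_k)^{-1}$ irrespective of whether $\mathcal{X}_k$ equals all of $\mathbb{R}^n$.
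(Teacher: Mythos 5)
Your proposal is correct and follows essentially the same route as the paper: both parts are obtained by verifying Assumption~\ref{as:bounded} (resp.\ Assumption~\ref{as:contraction}) for the resolvents $\opR_k$ and then invoking Theorem~\ref{theo:1} via~\eqref{result1bis} with $\alpha_k=1/2$. Your explicit strong-monotonicity computation for the contraction factor $(1+\lambda m_k)^{-1}$ in part (b) is simply the content of the reference the paper cites (Rockafellar's Eqs.\ (1.14)--(1.15)) written out, where the paper alternatively argues via $\opR_k=\partial F_e^{\star}$ for $F(\x)=\|\x\|^2/2+\lambda f_k(\x)$ and Lemma~\ref{lemma:scsm}.
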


\begin{proof}
Case \emph{(a)}. The sequence of operators $\opR_k$ verifies Assumption~\ref{as:bounded}, since $\mathcal{X}_k$ is compact and $\opR_k$ is the resolvent of a maximal monotone operator. The operator $\opR_k$ is also $\alpha$-averaged with $\alpha = 1/2$, \cite{Ryu2015}, from which result~\eqref{result2.1} follows.

Case \emph{(b)}. Contraction of the operator $\opR_k$ under strong convexity follows from~\cite[Eq.s~(1.14)-(1.15)]{Rockafellar1976} (or equivalently from Lemma~\ref{lemma:scsm} and the definition of $\opR_k$ in~\eqref{proxdef}: From~\eqref{proxdef}, one notices that $\opR_k  = \partial F_e^{-1} = \partial F^{\star}_e$, for the function $F(\x) = \|\x\|^2/2 + \lambda f_k(\x)$, and then uses the fact that $F(\x)$ is strongly convex with constant $1+m_k \lambda$ and Lemma~\ref{lemma:scsm} to conclude. Therefore the sequence of operators $\opR_k$ verifies Assumption~\ref{as:contraction} and result~\eqref{result2.2} follows. 
\qed
\end{proof}

\vskip2mm

Note that the convergence requirements of the running proximal point method are less restrictive than the running gradient method, as it happens in the time-invariant case. In particular, in the case of running proximal point method the functions $f(\x;t_k)$ do not have to be strongly smooth, and the stepsize $\lambda$ can be picked arbitrarily. 

\subsection{Forward-backward splitting for composite optimization}

Consider the composite optimization problem
\begin{equation}\label{composite}
\minimize_{\x \in \mathcal{X}(t)} \, f(\x; t) + g(\x;t), 
\end{equation}
where $f,g: \mathbb{R}^n\times \mathbb{R}_{+} \to \mathbb{R}$ are uniformly a CCP function, while the convex set $\mathcal{X}(t) \subseteq \mathbb{R}^n$ uniformly. Sample the optimization problem at $t_k$, $k = 1, 2, \dots$ and consider the following running version of the celebrated forward-backward splitting method:
\begin{enumerate}
\item[1.] Set $\x_1$ arbitrarily,
\item[2.] \textbf{for} $k>1$ \textbf{do}:

Compute the next approximate fixed point
\begin{equation}\label{tvfb}
\x_{k+1} = \prox_{g_k,\mathcal{X}_k, \lambda}(\x_k - \lambda \partial f_k(\x_k)).
\end{equation}
\end{enumerate}

\begin{corollary}\label{co:fb}
Consider the running forward-backward splitting defined in~\eqref{tvfb} and the generated sequence $\{\x_k\}_{k \in \mathbb{N}_{>0}}$. Let $f_k$ be strongly smooth for all $k$ with constant $M_k\leq M$ (which implies $\partial f_k = \nabla f_k$).  Fix $\lambda \in (0, 2/M)$ so that the operator $\opI - \lambda \nabla f_k$ is $\alpha$-averaged. Let Assumption~\ref{as:tv} hold.
\begin{enumerate}
\item[(a)] Let the sets $\mathcal{X}_k$ be compact, and therefore bounded, and let $X$ be defined as $X = \max_{k} \|\mathcal{X}_k\|$.  Define $\underline{a}:= \min_k (1-\alpha_k)/\alpha_k$, where $\alpha_k = 1/(2-\lambda M_k/2)$. Then, the sequence $\{\x_k\}_{k \in \mathbb{N}_{>0}}$ converges in the sense of~\eqref{result1} and in particular,
\begin{equation}\label{result2.1bis}
\frac{1}{T}\,\sum_{k=1}^T\, \|\prox_{g_k,\mathcal{X}_k, \lambda}(\x_k - \lambda \nabla f_k(\x_k))   - \x_k\|^2 \leq \frac{1}{\underline{a} T}{\|\x_1 - \x^*_1\|^2} + \frac{\delta}{\underline{a}}\,(4 X + \delta);
\end{equation} 
\item[(b)] Let the functions $f_k$ be strongly convex over $\mathbb{R}^n$, for all $k$, with constant $m_k$. Then the operator $\opI - \lambda \nabla f_k$ is a contraction with $L_k = \max\{|1-\lambda m_k|,|1-\lambda M_k|\}$ and we obtain primal convergence in the sense of~\eqref{result2}: 
\begin{equation}\label{result2.2bis}
\|\x_k - \x_k^*\| \leq \hat{L}_{k}\,\|\x_1 - \x_1^*\| + \frac{1-\bar{L}^{k-1}_{k}}{1-\bar{L}_{k}} \delta, \quad L_k = \max\{|1-\lambda m_k|,|1-\lambda M_k|\}.
\end{equation}
\end{enumerate}
\end{corollary}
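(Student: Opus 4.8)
The plan is to recognize the running forward--backward operator $\opT_k = \prox_{g_k,\mathcal{X}_k,\lambda}\,(\opI - \lambda \nabla f_k)$ as the composition of a forward (gradient) step and a backward (proximal) step, and then to verify the hypotheses of Theorem~\ref{theo:1} separately for each case, exactly as was done for the projected gradient method in Corollary~\ref{co:gradient}. The whole argument is a transcription of that proof with the projection $\Pi_{\mathcal{X}_k}$ replaced by the proximal map $\prox_{g_k,\mathcal{X}_k,\lambda}$.

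For case \emph{(a)}, I would first observe that the backward step $\prox_{g_k,\mathcal{X}_k,\lambda}$ is the resolvent of the maximal monotone operator $\partial g_k + N_{\mathcal{X}_k}$ (maximal because $g_k + i_{\mathcal{X}_k}$ is CCP) and is therefore $\tfrac12$-averaged, while the forward step $\opI - \lambda \nabla f_k$ is $(\lambda M_k/2)$-averaged for $\lambda \in (0,2/M)$ by strong smoothness \cite{Ryu2015}. Proposition~\ref{composition} then yields that $\opT_k$ is $\alpha_k$-averaged with $\alpha_k = 1/(2-\lambda M_k/2)$, consistent with the definition of $\underline{a}$ in the statement. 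To invoke Theorem~\ref{theo:1}\emph{(a)} I must check Assumption~\ref{as:bounded}: since the proximal step is the final operation and its minimization is taken over $\mathcal{X}_k$, the output $\x_{k+1}$ always lies in $\mathcal{X}_k$, so $\im\,\opT_k \subseteq \mathcal{X}_k$ is compact and bounded by $X = \max_k\|\mathcal{X}_k\|$. Applying the weighted residual bound~\eqref{result1bis} and dividing through by $\underline{a}$ then produces~\eqref{result2.1bis}.

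For case \emph{(b)}, I would instead verify Assumption~\ref{as:contraction}. When $f_k$ is additionally strongly convex, the forward step $\opI - \lambda \nabla f_k$ is a contraction with factor $L_k = \max\{|1-\lambda m_k|,\,|1-\lambda M_k|\}$ by~\eqref{grlinear}, while the proximal step remains nonexpansive (being $\tfrac12$-averaged). Since composing a contraction with a nonexpansive map leaves the contraction factor unchanged, $\opT_k$ is a contraction with the same $L_k$; Assumption~\ref{as:contraction} therefore holds and Theorem~\ref{theo:1}\emph{(b)} yields~\eqref{result2.2bis} directly.

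The only genuinely delicate point is the boundedness verification in case \emph{(a)}: one must argue that the image of the \emph{composite} operator, not merely of the proximal step taken in isolation, is contained in the compact set $\mathcal{X}_k$. This holds because the forward step feeds into the proximal step, whose $\arg\min$ is constrained to $\mathcal{X}_k$, so the constraint set caps the image regardless of how large the intermediate gradient step may be. Everything else is routine $\alpha$-averaged and contraction bookkeeping already established for the gradient method.
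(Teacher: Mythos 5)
Your proof is correct and follows essentially the same route as the paper: identify $\opT_k$ as the composition of the $\tfrac12$-averaged proximal operator with the $(\lambda M_k/2)$-averaged gradient step, obtain $\alpha_k = 1/(2-\lambda M_k/2)$ from Proposition~\ref{composition}, and invoke Theorem~\ref{theo:1}(a) via~\eqref{result1bis} for case (a) and the contraction-composed-with-nonexpansive argument for case (b). Your explicit verification that $\im\,\opT_k \subseteq \mathcal{X}_k$ (so Assumption~\ref{as:bounded} holds) is a point the paper leaves implicit, but it is the same argument.
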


\begin{proof}
The proof is similar to the one of Corollary~\ref{co:gradient}. The operator described in~\eqref{tvfb} is the composition of a proximal operator and $\opI - \lambda \nabla f_k$. The proximal operator is $\alpha$-averaged, with $\alpha = 1/2$, and therefore the composition is $\alpha$ averaged with $\alpha_k = 1/(2-\lambda M_k/2)$. By~\eqref{result1bis}, result~\eqref{result2.1bis} follows. Result~\eqref{result2.2bis} can be proven by noticing that the proximal operator is non-expansive and therefore if $f_k$ is also strongly convex, $\opI - \lambda \nabla f_k$ and thus the whole operator in~\eqref{tvfb} becomes a contraction. 
\qed
\end{proof}

\subsection{Dual ascent with inequality constraints}

We look now at the linearly constrained optimization problem

\begin{equation}\label{inequality}
\minimize_{\x \in \mathcal{X}} \, f(\x; t), \, \textrm{subject to} \, \A \x \leq \b
\end{equation}
where $f: \mathbb{R}^n\times \mathbb{R}_{+} \to \mathbb{R}$ is uniformly a CCP function, while $\A \in \mathbb{R}^{m\times n}$ and $\b \in \mathbb{R}^m$. The inequality is intended element-wise and the convex set $\mathcal{X} \subseteq \mathbb{R}^n$. We sample the optimization problem at $t_k$, $k = 1, 2, \dots$ and we assume that strong duality holds for each $t_k$. The dual problems of the sampled primal ones are
\begin{equation}\label{eq.dual}
\maximize_{\p \in \mathbb{R}^m_{+}} \, q_k(\p),
\end{equation}
where $\p$ is the vector collecting the dual variables associated with the inequality constraint $\A\x \leq \b$. Under Slater's condition, the optimal dual variables are bounded~\cite{Nedic2009a}, i.e.,
\begin{equation}
\p_k^* \in \left\{\p_k \in \mathbb{R}^m_{+}\,|\, \|\p_k\|\leq \frac{1}{\gamma}(f_k(\bar{\x}) - q_k(\tilde{\p}))\right\} =: \mathcal{P}_k,
\end{equation}
where $\gamma = \min_{1\leq i \leq m} [\b - \A\bar{\x}]_i$, while $\bar{\x}$ is a Slater's vector and $\tilde{\p}$ is any dual feasible variable. We note that $\mathcal{P}_k$ can be computed easily online, since $\bar{\x}$ is constant. 

The running Lagrangian dual ascent scheme to find and track the time-varying dual optimal variables of~\eqref{eq.dual} is the following recursion,
\begin{enumerate}
\item[1.] Set $\p_1$ arbitrarily,
\item[2.] \textbf{for} $k>1$ \textbf{do}:

Compute the next approximate fixed point
\begin{equation}\label{tvdual}
\p_{k+1} = \Pi_{\mathcal{P}_k}(\opI + \lambda \partial q_k)(\p_k) = \left\{\begin{array}{l}\x_{k+1} \in \arg\min_{\x \in \mathcal{X}} \{f_k(\x) + \p_k^\transp (\A\x - \b) \} \\ \p_k = \Pi_{\mathcal{P}_k}(\p_k + \lambda (\A\x_{k+1} - \b))\, .  \end{array}\right.
\end{equation}
\end{enumerate}

\begin{corollary}\label{co:da}
Consider the running Lagrangian dual ascent defined in~\eqref{tvdual} and the generated sequence $\{\x_k\}_{k \in \mathbb{N}_{>0}}$. Let the maximum singular value of $\A$ be $\sigma_{\max}$, and assume it is positive w.l.g.\,. Let $f_k$ be strongly convex for all $k$ with constant $m_k$ (which implies the dual function being differentiable, $\partial q_k = \nabla q_k$, and $-q_k$ being strongly smooth, with constant $\sigma^2_{\max}/m_k$ ).  Fix $\lambda \in (0, 2 m/\sigma^2_{\max})$ so that the operator $\Pi_{\mathcal{P}_k}(\opI + \lambda \nabla q_k)$ is $\alpha$-averaged with $\alpha=\alpha_k$. Let Assumption~\ref{as:tv} hold.
\begin{enumerate}
\item[(a)] Let $\delta$ measure the variability of the optimal dual variables, while let $\mathcal{X}_k$ in Assumption~\ref{as:bounded} be $\mathcal{X}_k = \mathcal{P}_k$. Define $\underline{a}:= \min_k (1-\alpha_k)/\alpha_k$. Then, the sequence $\{\p_k\}_{k \in \mathbb{N}_{>0}}$ converges in the sense of~\eqref{result1} and in particular,
\begin{equation}\label{result3.1}
\frac{1}{T}\,\sum_{k=1}^T\, \|\Pi_{\mathcal{P}_k}(\p_k + \lambda \nabla q_k(\p_k))   - \p_k\|^2 \leq \frac{1}{\underline{a} T}\|\p_1 - \p^*_1\|^2 + \frac{\delta}{\underline{a}}\,(4 X + \delta);
\end{equation} 
\item[(b)] Let the functions $f_k$ be strongly smooth, for all $k$, with constant $M_k$, and let the smallest singular value of $\A$, $\sigma_{\min}$, be positive. Let $\mathcal{X} = \mathbb{R}^n$. In this case the $-q_k$ is strongly convex with constant $\sigma_{\min}^2/M_k$, and the operator $\Pi_{\mathcal{P}_k}(\opI + \lambda \nabla q_k)$ is a contraction, with $L_k = \max\{|1-\lambda \sigma_{\min}^2/M_k|,|1-\lambda \sigma^2_{\max}/m_k|\}$ and we obtain dual convergence in the sense of~\eqref{result2}: 
\begin{align}\label{result3.2}
\|\p_k - \p_k^*\| &\leq \hat{L}_{k}\,\|\p_1 - \p_1^*\| + \frac{1-\bar{L}_k^{k-1}}{1-\bar{L}_k} \delta, \\  L_k &=\max\{|1-\lambda \sigma_{\min}^2/M_k|,|1-\lambda \sigma^2_{\max}/m_k|\}. \nonumber 
\end{align}
In addition, we obtain also primal convergence as,
\begin{equation}\label{result3.3}
\|\x_k - \x_k^*\| \leq \frac{\sigma_{\max}}{m_k}\,\|\p_k - \p_k^*\|.
\end{equation}
\end{enumerate}
\end{corollary}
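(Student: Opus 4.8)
The plan is to transfer the entire analysis into the dual domain, where the running dual ascent becomes a running gradient ascent whose operator is handled exactly as in Corollary~\ref{co:gradient}. The first step is to pin down the regularity of the dual function. Writing $f_{\mathrm{e},k} = f_k + i_{\mathcal{X}}$, strong duality together with the definition of the Lagrangian gives the conjugate representation $q_k(\p) = -\b^\transp\p - f_{\mathrm{e},k}^{\star}(-\A^\transp\p)$, and by strong convexity of $f_k$ the (unique) inner minimizer is recovered as $\x^{*}(\p) = \nabla f_{\mathrm{e},k}^{\star}(-\A^\transp\p)$. Lemma~\ref{lemma:scsm}(ii) makes $f_{\mathrm{e},k}^{\star}$ strongly smooth with constant $1/m_k$; composing with the linear map $-\A^\transp$ of norm $\sigma_{\max}$ shows $-q_k$ is strongly smooth with constant $\sigma_{\max}^2/m_k$, so in particular $\partial q_k = \nabla q_k$ as claimed.

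For case (a), the operator $\opI + \lambda\nabla q_k = \opI - \lambda\nabla(-q_k)$ is then $\alpha$-averaged for $\lambda\in(0,2m/\sigma_{\max}^2)$ by the same argument used for $\opT_{\mathrm{G}}$ in Section~\ref{sec:prelim}, and its composition with the projection $\Pi_{\mathcal{P}_k}$ (which is $\tfrac12$-averaged) is $\alpha_k$-averaged by Proposition~\ref{composition}. Slater's condition confines the dual optimizers to the explicitly computable compact set $\mathcal{P}_k$, and since the iteration terminates with a projection onto $\mathcal{P}_k$ the image of $\opT_k$ lies in $\mathcal{P}_k$; hence Assumption~\ref{as:bounded} holds with $\mathcal{X}_k = \mathcal{P}_k$ and $X = \max_k\|\mathcal{P}_k\|$. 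With $\delta$ now bounding the drift of the dual optimizers in Assumption~\ref{as:tv}, Theorem~\ref{theo:1}(a) specialized through \eqref{result1bis} with $\underline{a} := \min_k(1-\alpha_k)/\alpha_k$ yields \eqref{result3.1}.

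For case (b), I additionally invoke Lemma~\ref{lemma:scsm}(iii): since $\mathcal{X}=\mathbb{R}^n$ and $f_k$ is strongly smooth with constant $M_k$, the conjugate $f_k^{\star}$ is strongly convex with constant $1/M_k$; composing with $-\A^\transp$ and using $\sigma_{\min}>0$ propagates strong convexity to $-q_k$ with constant $\sigma_{\min}^2/M_k$. Thus $-q_k$ is simultaneously strongly convex and strongly smooth, so $\opI-\lambda\nabla(-q_k)$ is a contraction with factor $\max\{|1-\lambda\sigma_{\min}^2/M_k|,|1-\lambda\sigma_{\max}^2/m_k|\}$, and composing with the nonexpansive projection preserves this factor; Assumption~\ref{as:contraction} then holds and Theorem~\ref{theo:1}(b) delivers \eqref{result3.2}. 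The primal bound \eqref{result3.3} follows from Lipschitz continuity of the recovery map $\p\mapsto\x^{*}(\p)=\nabla f_{\mathrm{e},k}^{\star}(-\A^\transp\p)$: since $\nabla f_{\mathrm{e},k}^{\star}$ is $(1/m_k)$-Lipschitz and $\|\A^\transp\|=\sigma_{\max}$, one obtains $\|\x_k-\x_k^{*}\|\le(\sigma_{\max}/m_k)\|\p_k-\p_k^{*}\|$.

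I expect the main obstacle to be the dual-regularity bookkeeping, namely correctly transferring the strong convexity/smoothness constants through the conjugation of Lemma~\ref{lemma:scsm} and the subsequent composition with $-\A^\transp$. The strong convexity direction in case (b) is the delicate one: it genuinely requires $\sigma_{\min}>0$ (full row rank of $\A$), so that $-\A^\transp$ does not flatten the conjugate along a nontrivial kernel, and the constant degrades by exactly $\sigma_{\min}^2$ rather than $\sigma_{\max}^2$. Once these constants are secured, the remainder reduces verbatim to the averaged/contractive operator machinery already established for the gradient method.
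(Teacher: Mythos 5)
Your proposal is correct and follows essentially the same route as the paper: transfer the regularity of $f_k$ to the dual via Lemma~\ref{lemma:scsm} and the conjugate representation $q_k(\p) = -\b^\transp\p - \cf_{e,k}(-\A^\transp\p)$, then invoke the averaged/contractive machinery of Theorem~\ref{theo:1} exactly as in Corollary~\ref{co:gradient}. The only (minor) divergence is the primal bound~\eqref{result3.3}: the paper cites a Lipschitz-stability result for generalized equations \cite[Theorem 2F.9]{Dontchev2009}, whereas you derive the same constant $\sigma_{\max}/m_k$ directly from the $(1/m_k)$-Lipschitz continuity of $\nabla \cf_{e,k}$ composed with $-\A^\transp$, which is an equivalent and more self-contained argument.
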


\begin{proof}
The proof is similar to the one of Corollary~\ref{co:gradient}. The only differences are that we are dealing with the dual functions $q_k$. For any dual function $q_k$, we have $\partial q_k(\p) = \A\x^*_{\p} - \b$, while for optimality $(\partial f_k + N_{\mathcal{X}})(\x^*_{\p}) + \A^\transp \p \ni \mathbf{0}$. Using the same notation as Lemma~\ref{lemma:scsm}, we have   $\partial f_{e,k}(\x^*_{\p}) + \A^\transp \p \ni \mathbf{0}$, or $\x^*_{\p} \in \partial \cf_{e,k} (-\A^\transp \p)$. Therefore, $\partial q_k(\p) = \A\partial \cf_{e,k} (-\A^\transp \p) - \b$, and thus if $f_k$ is strongly convex over $\mathbb{R}^n$ with constant $m_k$, then, by Lemma~\ref{lemma:scsm}, we have that $\cf_{e,k}$ is strongly smooth with constant $1/m_k$, and $-q_k$ is strongly smooth with constant $\sigma_{\max}^2/m_k$. In addition, if $f_k$ is strongly smooth over $\mathbb{R}^n$ with constant $M_k$ and $\mathcal{X} = \mathbb{R}^n$, then, by Lemma~\ref{lemma:scsm}, we have that $\cf_{e,k}$ is strongly convex with constant $1/M_k$, and $-q_k$ is strongly convex with constant $\sigma_{\min}^2/m_k$, see also~\cite{Ryu2015}. By applying this correspondence, the results~\eqref{result3.1}-\eqref{result3.2} follow. 

Result~\eqref{result3.3} is an application of~\cite[Theorem 2F.9]{Dontchev2009} applied to the generalized equation $(\partial f_k + N_{\mathcal{X}})(\x^*_{\p}) + \A^\transp\p \ni \mathbf{0}$.
\qed 
\end{proof}
\vskip2mm

\section{A bounding procedure for more general time-varying algorithms}\label{sec:opt.b}

In this section, we widen the class of optimization problems we tackle. In particular, we consider problems that do not give rise to \emph{bounded} operators when put in terms of fixed point equations. These optimization problems are, for example, the linearly constrained ones. To say it in another way, in this section we develop algorithms for time-varying operators that do not satisfy Assumption~\ref{as:bounded} directly, yet we force the boundedness requirement via a bounding procedure.

Recall the running \Mk iteration: 
\begin{equation}
\x_{k+1} = \opT_k(\x_k)
\end{equation}
for a properly defined $\alpha$-averaged operator sequence $\{\opT_k\}_{k \in \mathbb{N}_{>0}}$. We now assume that each $\opT_k$ is not necessarily bounded, that is $\im \opT \subseteq \mathbb{R}^n$. Define a proper, convex and compact set $\mathcal{B} \subset \mathbb{R}^n$. We introduce the bounded running \Mk iteration as the one that implements the recursion:
\begin{equation}\label{eq:brun}
\x_{k+1} = \Pi_\mathcal{B}\opT_k(\x_k).
\end{equation}
Since the projection operator $\Pi_\mathcal{B}$ is an $\alpha$-averaged operator, then due to Proposition~\ref{composition} on the composition of $\alpha$-averaged operators, the bounded running \Mk iteration~\eqref{eq:brun} converges under the same conditions of Theorem~\ref{theo:1} (by substituting $\opT_k$ with $\Pi_\mathcal{B}\opT_k$ and by noticing that $\Pi_\mathcal{B}\opT_k$ is bounded and verifies Assumption~\ref{as:bounded}). 

The seemingly ad-hoc bounding procedure introduced in~\eqref{eq:brun} has been used to bound Lagrangian multipliers in different contexts in the literature. In~\cite{Erseghe2015}, the author calls it a clipping procedure while in~\cite{Koppel2015} the authors assume the existence of a bounding set $\mathcal{B}$ in their Assumption~5; finally in~\cite{Bravo2016}, it is considered -- at least in Hilbert spaces -- to project an inexact update onto the domain (see iteration (IKM$_{p}$)).

With this in place, we can now tackle more general convex optimization problems, namely the ones with equality constraints.  

\subsection{Dual ascent with equality constraints}

Let us consider the problem,
\begin{equation}
\minimize_{\x \in \mathcal{X}} \, f(\x; t), \, \textrm{subject to} \, \A \x = \b
\end{equation}
where $f: \mathbb{R}^n\times \mathbb{R}_{+} \to \mathbb{R}$ is uniformly a CCP function, while $\A \in \mathbb{R}^{m\times n}$ and $\b \in \mathbb{R}^m$. The convex set $\mathcal{X} \subseteq \mathbb{R}^n$. We sample the optimization problem at $t_k$, $k = 1, 2, \dots$ and we assume that strong duality holds for each $t_k$. The dual problems of the sampled primal ones are
\begin{equation}
\maximize_{\p \in \mathbb{R}^m} \, q_k(\p),
\end{equation}
where $\p$ is the vector collecting the dual variables associated with the inequality constraint $\A\x = \b$. As part of the bounding procedure, we construct a convex compact set $\mathcal{B}$, such that the optimal dual variables are contained in it (one could start with a large enough $\mathcal{B}$ and then reduce it if possible).  

The running Lagrangian dual ascent scheme to find and track the time-varying dual optimal variables of~\eqref{eq.dual} is the following recursion,
\begin{enumerate}
\item[1.] Set $\p_1$ arbitrarily,
\item[2.] \textbf{for} $k>1$ \textbf{do}:

Compute the next approximate fixed point
\begin{equation}\label{tvdual2}
\p_{k+1} = \Pi_{\mathcal{B}}(\opI + \lambda \partial q_k)(\p_k) = \left\{\begin{array}{l}\x_{k+1} \in \arg\min_{\x \in \mathcal{X}} \{f_k(\x) + \p^\transp (\A\x - \b) \} \\ \p_k = \Pi_{\mathcal{B}}(\p_k + \lambda (\A\x_{k+1} - \b))\, .  \end{array}\right.
\end{equation}
\end{enumerate}

The running Lagrangian dual ascent~\eqref{tvdual2} converges as dictated in Corollary~\ref{co:da}, where now the user defined $\mathcal{B}$ is used in lieu of $\mathcal{P}_k$. Note that the set $\mathcal{B}$ is only needed in the case \emph{(a)} of Corollary~\ref{co:da}, while $\mathcal{B}$ can be chosen as $\mathbb{R}^m$ if we are in case \emph{(b)}, i.e., strongly convex functions and $\mathcal{X} = \mathbb{R}^n$. 

An additional result for the case of strongly smooth functions $f_k$ is reported next. This result is useful in practice when the matrix $\A$ is not full-row rank, such that $\sigma_{\min}(\A) = 0$. This case has been studied in \cite{Jakubiec2013} and in \cite{Necoara2015}.

\begin{corollary}\label{co:daeq}
Consider the running Lagrangian dual ascent defined in~\eqref{tvdual2} and the generated sequence $\{\x_k, \p_k\}_{k \in \mathbb{N}_{>0}}$ with $\mathcal{B} = \mathbb{R}^m$, $\b \in \im\A$ (so that the problem has a feasible solution), and $\mathcal{X} = \mathbb{R}^n$. Let the maximum singular value of $\A$ be $\sigma_{\max}$, and assume it is positive w.l.g.\,. Let $f_k$ be strongly convex for all $k$ with constant $m_k$ (which implies the dual function being differentiable, $\partial q_k = \nabla q_k$, and $-q_k$ being strongly smooth, with constant $\sigma^2_{\max}/m_k$ ).  Fix $\lambda \in (0, 2 m/\sigma^2_{\max})$ so that the operator $\opI + \lambda \nabla q_k$ is $\alpha$-averaged. Let Assumption~\ref{as:tv} hold.

Let the functions $f_k$ be strongly smooth, for all $k$, with constant $M_k$. In addition, assume that the initial dual variable is in the image of $\A$: $\p_1 \in \im \A$ and let $\sigma_0>0$ be the first (and minimal) nonzero singular value of $\A$.

In this case, 
the operator $\opI + \lambda \nabla q_k$ is a contraction for every $\p \in \im \A$, with $L_k = \max\{|1-\lambda \sigma_{0}^2/M_k|,|1-\lambda \sigma^2_{\max}/m_k|\}$ and we obtain dual convergence in the sense of~\eqref{result2}:
\begin{equation}\label{result3.2bis}
\|\p_k - \p_k^*\| \leq \hat{L}_{k}\,\|\p_1 - \p_1^*\| + \frac{1-\bar{L}^{k-1}_k}{1-\bar{L}_k} \delta, \, L_k =\max\{|1-\lambda \sigma_{0}^2/M_k|,|1-\lambda \sigma^2_{\max}/m_k|\}.
\end{equation}
In addition, we obtain also primal convergence as,
\begin{equation}\label{result3.3bis}
\|\x_k - \x_k^*\| \leq \frac{\sigma_{\max}}{m_k}\,\|\p_k - \p_k^*\|.
\end{equation}
\end{corollary}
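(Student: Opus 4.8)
The plan is to reduce this to the contractive case of Theorem~\ref{theo:1}(b), exactly as in Corollary~\ref{co:da}(b), but to run the whole argument on the subspace $\im\A\subseteq\mathbb{R}^m$ rather than on all of $\mathbb{R}^m$; this is precisely what lets the (vanishing) $\sigma_{\min}$ be replaced by the smallest \emph{nonzero} singular value $\sigma_0$. First I would note that, because $\mathcal{B}=\mathbb{R}^m$, the projection $\Pi_{\mathcal{B}}$ in~\eqref{tvdual2} is the identity, so the dual recursion is the plain gradient ascent $\p_{k+1}=(\opI+\lambda\nabla q_k)(\p_k)=\p_k+\lambda\nabla q_k(\p_k)$.

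The key structural step is to show that $\im\A$ is invariant under these dynamics. Using the gradient formula derived in the proof of Corollary~\ref{co:da}, namely $\nabla q_k(\p)=\A\nabla\cf_{e,k}(-\A^\transp\p)-\b$, the first term lies in $\im\A$ by construction and the second lies in $\im\A$ by the hypothesis $\b\in\im\A$; hence $\nabla q_k(\p)\in\im\A$ for every $\p$. Since $\p_1\in\im\A$ by assumption, induction gives $\p_k\in\im\A$ for all $k$. I would also observe that the dual objective depends on $\p$ only through $\A^\transp\p$ and through the linear term $\p^\transp\b$, which vanishes on $(\im\A)^\perp=\ker\A^\transp$ because $\b\in\im\A$; consequently each dual optimal set has the form $\p_k^*+\ker\A^\transp$ and admits a unique representative in $\im\A$. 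I take these representatives as the $\{\p_k^*\}$, so that Assumption~\ref{as:tv} is applied within $\im\A$ with the same $\delta$ and the fixed-point quantities are all measured inside the invariant subspace.

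Next I would establish a restricted strong convexity of $-q_k$ on $\im\A$. Since $f_k$ is strongly smooth with constant $M_k$, Lemma~\ref{lemma:scsm}(iii) gives that $\cf_{e,k}$ is strongly convex with constant $1/M_k$, hence $\nabla\cf_{e,k}$ is $(1/M_k)$-strongly monotone. Writing $\v=\p-\p'\in\im\A$ and composing this strong monotonicity with the linear map $-\A^\transp$ together with the identity $\nabla q_k(\p)=\A\nabla\cf_{e,k}(-\A^\transp\p)-\b$, a direct computation gives $(\nabla(-q_k)(\p)-\nabla(-q_k)(\p'))^\transp\v\geq(1/M_k)\|\A^\transp\v\|^2\geq(\sigma_0^2/M_k)\|\v\|^2$, where the last bound uses $\|\A^\transp\v\|\geq\sigma_0\|\v\|$, valid for $\v\in\im\A=(\ker\A^\transp)^\perp$. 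Thus $-q_k$ is strongly convex with constant $\sigma_0^2/M_k$ on the invariant subspace $\im\A$, while it remains strongly smooth with constant $\sigma_{\max}^2/m_k$ as already shown in Corollary~\ref{co:da}. Consequently, viewed as a self-map of $\im\A$, the operator $\opI+\lambda\nabla q_k$ is a contraction with factor $L_k=\max\{|1-\lambda\sigma_0^2/M_k|,|1-\lambda\sigma_{\max}^2/m_k|\}$ by the same argument as~\eqref{grlinear}. Assumption~\ref{as:contraction} therefore holds on $\im\A$, and Theorem~\ref{theo:1}(b) delivers the dual bound~\eqref{result3.2bis}. The primal bound~\eqref{result3.3bis} then follows verbatim from Corollary~\ref{co:da}, i.e.\ by applying \cite[Theorem 2F.9]{Dontchev2009} to the generalized equation $(\partial f_k+N_{\mathcal{X}})(\x^*_{\p})+\A^\transp\p\ni\mathbf{0}$.

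I expect the main obstacle to be rigorously justifying the subspace reduction: one must verify not only that the iterates stay in $\im\A$, but also that the dual optimizers can be chosen consistently in $\im\A$ so that the time-variation bound $\delta$ and the fixed-point residual are genuinely measured within the subspace, and that $\sigma_0$ is indeed the correct modulus of injectivity of $\A^\transp$ restricted to $\im\A$ (in place of the useless $\sigma_{\min}=0$ on all of $\mathbb{R}^m$). Once the dynamics are confined to $\im\A$, the remaining estimates are identical to the full-rank case of Corollary~\ref{co:da}(b).
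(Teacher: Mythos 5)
Your proposal is correct and follows essentially the same route as the paper's proof: show that the iterates remain in $\im\A$, establish restricted strong monotonicity of $-\nabla q_k$ on $\im\A$ with modulus $\sigma_0^2/M_k$ via $\|\A^\transp\v\|\geq\sigma_0\|\v\|$ for $\v\in\im\A=(\ker\A^\transp)^\perp$, and then invoke the contraction case of Theorem~\ref{theo:1} and the argument of Corollary~\ref{co:da}. Your additional care in choosing the dual optimizers as the unique representatives in $\im\A$ (so that Assumption~\ref{as:tv} and the error norm are measured inside the invariant subspace) is a point the paper leaves implicit, but it is a refinement of the same argument rather than a different one.
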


\begin{proof}
To prove the contraction property, we only need to show that the functions $-q_k$ have a strong convex-like property for all $\p \in \im \A$ and that the Algorithm~\eqref{tvdual2} generates $\p_k \in \im \A$ (i.e., keeps the dual variable feasible). The second claim is easy to show since $\p_1 \in \im \A$ and 
\begin{equation}
\p_{k+1} = \p_{k} + \alpha (\A \x_{k+1}-\b) \in \im \A.  
\end{equation}
To show the first claim, we recall that $\partial q_k(\p) = \A\partial \cf_{e,k} (-\A^\transp \p) - \b$ (as proved in the proof of Corollary~\ref{co:da}), and $\partial \cf_{e,k}$ is strongly convex with constant $1/M_k$ (since $\mathcal{X} = \mathbb{R}^n$ and therefore $\partial \cf_{e,k} = \partial \cf_{k}$). Therefore, for all $\p,\q\in\im\A$:
\begin{multline}
(\partial q_k(\p) - \partial q_k(\q))^\transp (\q - \p) = (\partial \cf_{k} (-\A^\transp \p) - \partial \cf_{k} (-\A^\transp \q))^\transp \A^\transp (\q - \p) \geq \\ \sigma_0^2/M_k \|\p - \q\|^2,  
\end{multline}
which implies strong monotonicity of $-\partial q_k(\p)$ for all $\p \in \im \A$, and therefore strong convexity of $-q_k(\p)$ for all $p \in \im \A$.  Then the contraction property follows from the fact that $-q_k$ is both strongly smooth with constant $\sigma^2_{\max}/m_k$ as easy to show, and strongly convex (over the restricted domain). The rest follows as in the proof of Corollary~\ref{co:da}. 
\qed
\end{proof}

\vskip2mm
The result has been applied to time-varying distributed optimization, namely dual decomposition~\cite{Jakubiec2013}, although with a slightly different analysis technique. It is worth noting that iteration~\eqref{tvdual2} extends the work of \cite{Jakubiec2013} to constrained problems ($\mathcal{X} \subset \mathbb{R}^n$) and in the case of nonsmooth $f_k$.




\subsection{Douglas-Rachford splitting for composite optimization}

Consider once again the composite optimization problem
\begin{equation}\label{composite1}
\minimize_{\x \in \mathcal{X}(t)} \, f(\x; t) + g(\x;t), 
\end{equation}
where $f,g: \mathbb{R}^n\times \mathbb{R}_{+} \to \mathbb{R}$ are uniformly a CCP function, while the convex set $\mathcal{X}(t) \subseteq \mathbb{R}^n$ uniformly. Sample the optimization problem at $t_k$, $k = 1, 2, \dots$ and consider the following running version of the Douglas-Rachford splitting method, appropriately bounded:
\begin{enumerate}
\item[1.] Set $\z_1$ arbitrarily,
\item[2.] \textbf{for} $k>1$ \textbf{do}:

Compute the next approximate fixed point
\begin{align}\label{tvdr}
\z_{k+1} &= \Pi_{\mathcal{B}}(1/2 \opI + 1/2 \opC_{\partial f_{e,k}}\opC_{\partial g_{k}})\z_{k}\\
\x_{k+1} &= \opR_{\partial g_{k}}(\z_{k+1}).
\end{align}
\end{enumerate}
where $\opC_\opT$ is the Cayley operator of $\opT$, defined as $\opC_\opT :=2\opR_\opT - \opI$. 

In particular, the iteration~\eqref{tvdr} can be written as~\cite{Ryu2015} 
\begin{align}
\z_{k+1} &= \Pi_{\mathcal{B}}(\z_{k} + \prox_{\partial f_{e,k}, \lambda}(2 \x_{k} - \z_k) - \x_{k})\\
\x_{k+1} &= \prox_{\partial g_{k}, \lambda}(\z_{k+1}) \label{second}
\end{align}

\begin{corollary}\label{co:dr}
Consider the running Douglas-Rachford splitting defined in~\eqref{tvdr} and the generated sequence $\{\z_k, \x_k\}_{k \in \mathbb{N}_{>0}}$. For all CCP $f_k$ and $g_k$ the operator $(1/2 \opI + 1/2 \opC_{\partial f_{e,k}}\opC_{\partial g_{k}})$ is $\alpha$-averaged with $\alpha = 1/2$. Let Assumption~\ref{as:tv} hold.
\begin{enumerate}
\item[(a)] The sequence $\{\z_k,\x_k\}_{k \in \mathbb{N}_{>0}}$ converges in the sense of~\eqref{result1} and in particular,
\begin{equation}\label{result1.6}
\frac{1}{T}\,\sum_{k=1}^T\, \|\Pi_{\mathcal{B}}(1/2 \opI + 1/2 \opC_{\partial f_{e,k}}\opC_{\partial g_{k}})\z_{k}  - \z_k\|^2 \leq \frac{2}{T}{\|\z_1 - \z^*_1\|^2} + 2 \delta\,(4 X + \delta)
\end{equation} 
and
\begin{equation}\label{result2.7_0}
\|\x_k - \x_k^*\| \leq \|\z_k - \z_k^*\|.
\end{equation}
where $X$ is the uniform upper bound on $\|\mathcal{B}\|$.  
\item[(b)] Let the function  $f_{k}$ be strongly monotone and strongly smooth uniformly over $\mathbb{R}^n$, with constants $m_k$ and $M_k$, respectively, and let $\mathcal{X}_k = \mathbb{R}^n$. Then the operator $(1/2 \opI + 1/2 \opC_{\partial f_{e,k}}\opC_{\partial g_{k}})$ is a contraction with $L_k = 1/2(1 + \max\{\frac{\lambda M_k -1}{\lambda M_k +1}, \frac{1-\lambda m_k}{1+\lambda m_k}\})<1$ and we obtain primal convergence in the sense of~\eqref{result2}: 
\begin{align}\label{result2.6}
\|\z_k - \z_k^*\| &\leq \hat{L}_{k}\,\|\z_1 - \z_1^*\| + \frac{1-\bar{L}^{k-1}_{k}}{1-\bar{L}_{k}} \delta,\\ L_k &= 1/2\left(1 + \max\left\{\frac{\lambda M_k -1}{\lambda M_k +1}, \frac{1-\lambda m_k}{1+\lambda m_k}\right\}\right) \nonumber;
\end{align}
and 
\begin{equation}\label{result2.7}
\|\x_k - \x_k^*\| \leq \|\z_k - \z_k^*\|.
\end{equation}
Note that in this second case, one can pick $\mathcal{B} = \mathbb{R}^n$.
\end{enumerate}
\end{corollary}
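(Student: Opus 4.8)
The plan is to reduce both cases to the abstract running \Mk convergence result (Theorem~\ref{theo:1}) by establishing the relevant operator-theoretic properties of the Douglas--Rachford (DR) operator $\opT_{\mathrm{DR},k} := \tfrac{1}{2}\opI + \tfrac{1}{2}\opC_{\partial f_{e,k}}\opC_{\partial g_k}$, and then transferring the conclusions about the DR variable $\z_k$ to the primal variable $\x_k$ via nonexpansiveness of the proximal map. For the averagedness claim used throughout, I would recall that for any CCP function the resolvent $\opR_{\partial g_k}$ is firmly nonexpansive (i.e.\ $1/2$-averaged), so its Cayley operator $\opC_{\partial g_k} = 2\opR_{\partial g_k} - \opI$ is nonexpansive; the same holds for $\opC_{\partial f_{e,k}}$. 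Hence $\opC_{\partial f_{e,k}}\opC_{\partial g_k}$ is nonexpansive, and $\opT_{\mathrm{DR},k}$, being the average of a nonexpansive operator with $\opI$, is $1/2$-averaged in the sense of~\eqref{eq.alphaav}.

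For part (a), the iterated operator is $\Pi_{\mathcal{B}}\opT_{\mathrm{DR},k}$. Since $\Pi_{\mathcal{B}}$ is $1/2$-averaged, Proposition~\ref{composition} gives that this composition is $\alpha_k$-averaged with $\alpha_k = (\tfrac12+\tfrac12-2\cdot\tfrac14)/(1-\tfrac14) = 2/3$, and its image lies in the compact set $\mathcal{B}$, so Assumption~\ref{as:bounded} holds with $X = \|\mathcal{B}\|$. I would then invoke~\eqref{result1bis} of Theorem~\ref{theo:1}(a); because $(1-\alpha_k)/\alpha_k = 1/2$, multiplying through by $2$ produces exactly the two factors of $2$ appearing in~\eqref{result1.6}. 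Finally, since $\x_k = \opR_{\partial g_k}(\z_k)$ and the associated primal point is $\x_k^* = \opR_{\partial g_k}(\z_k^*)$ with $\z_k^* \in \fix \opT_{\mathrm{DR},k}$, nonexpansiveness of the resolvent immediately gives~\eqref{result2.7_0}.

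For part (b), the crux is the contraction factor of the DR operator. Under strong convexity ($m_k$) and strong smoothness ($M_k$) of $f_k$ with $\mathcal{X}_k = \mathbb{R}^n$, the operator $\partial f_{e,k} = \nabla f_k$ is $m_k$-strongly monotone and $M_k$-Lipschitz, so by the standard reflected-resolvent estimate of~\cite{Ryu2015} the Cayley operator $\opC_{\partial f_{e,k}}$ is a contraction with factor $c_k = \max\{(\lambda M_k - 1)/(\lambda M_k + 1),\, (1 - \lambda m_k)/(1 + \lambda m_k)\} < 1$. Composing with the nonexpansive $\opC_{\partial g_k}$ preserves this factor, and averaging with the identity gives, by the triangle inequality, $\|\opT_{\mathrm{DR},k}\x - \opT_{\mathrm{DR},k}\y\| \leq \tfrac{1}{2}(1 + c_k)\|\x - \y\|$, so $\opT_{\mathrm{DR},k}$ is a contraction with $L_k = \tfrac{1}{2}(1 + c_k) < 1$, verifying Assumption~\ref{as:contraction}. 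Choosing $\mathcal{B} = \mathbb{R}^n$ (so $\Pi_{\mathcal{B}} = \opI$), Theorem~\ref{theo:1}(b) delivers~\eqref{result2.6}, and prox nonexpansiveness again yields~\eqref{result2.7}.

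The main obstacle is the contraction-factor computation for the Cayley operator of $\nabla f_k$: although I would cite~\cite{Ryu2015} for the explicit value $c_k$, verifying it amounts to bounding the worst-case Lipschitz behaviour of $2\opR_{\partial f_{e,k}} - \opI$ across the strong-monotonicity/Lipschitz range and correctly identifying the maximum of the two endpoint expressions (and confirming $c_k<1$). A secondary care point is the fixed-point bookkeeping: one must confirm that a fixed point $\z_k^* \in \fix \opT_{\mathrm{DR},k}$ exists and that $\opR_{\partial g_k}(\z_k^*)$ is genuinely a minimizer of $f_k + g_k$, so that the primal bounds~\eqref{result2.7_0} and~\eqref{result2.7} are meaningful; this is the standard DR fixed-point characterization, and together with Assumption~\ref{as:tv} it guarantees a well-defined optimizer trajectory $\{\x_k^*\}$.
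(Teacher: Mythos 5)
Your proposal is correct and follows essentially the same route as the paper: part (a) uses nonexpansiveness of the Cayley operators to get $1/2$-averagedness, composes with $\Pi_{\mathcal{B}}$ via Proposition~\ref{composition} to get $\alpha_k = 2/3$ and $(1-\alpha_k)/\alpha_k = 1/2$, and invokes~\eqref{result1bis}; part (b) establishes the contraction factor $L_k = \tfrac{1}{2}(1+c_k)$ and applies Theorem~\ref{theo:1}(b), with the primal bounds following from nonexpansiveness of the resolvent. The only difference is that where the paper simply cites \cite[Theorems~1 and 2]{Giselsson2017} for the contraction property, you unpack that citation by deriving the same factor from the reflected-resolvent estimate for a strongly monotone, Lipschitz $\nabla f_k$ — a slightly more self-contained presentation of the identical argument.
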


\begin{proof}
Case \emph{(a)}. The Cayley operator is non-expansive, so the operator $(1/2 \opI + 1/2 \opC_{\partial f_{e,k}}\opC_{\partial g_{k}})$ is $\alpha$-averaged with $\alpha = 1/2$. The claim~\eqref{result1.6} follows considering the composition with the projection operator. In particular the operator $\Pi_{\mathcal{B}}(1/2 \opI + 1/2 \opC_{\partial f_{e,k}}\opC_{\partial g_{k}})$ is $\alpha$-averaged with $\alpha = 2/3$, while $(1-\alpha_k)/\alpha_k  = 1/2$.

Case \emph{(b)}. The contraction properties follows from~\cite[Theorems~1 and 2]{Giselsson2017}. Results~\eqref{result2.7_0}-\eqref{result2.7} follows from~\eqref{second} and the non-expansive nature of the resolvent.   
\qed
\end{proof}

\subsection{Alternating direction method of multipliers}\label{sec:admm}

We finish our analysis of time-varying algorithms with the celebrated alternating direction method of multipliers (ADMM) in its running form. We will rely on the fact that ADMM can be derived from the Douglas-Rachford splitting and use the results of the previous subsection.  

In this context, we are now interested in the time-varying problem,
\begin{equation}
\minimize_{\x \in \mathcal{X}(t), \z \in \mathcal{Z}(t) } f(\x; t) + g(\z; t)\,\, \textrm{subject to} \,\, \A \x + \B\z = \c,
\end{equation}
where $\A$, $\B$, $\c$ are matrices and vector of appropriate dimensions.  

By sampling the problem at instances $t_k$ with $k = 1,2,\dots$ we obtain a sequence of time-invariant problems, 
\begin{equation}\label{admm1}
\minimize_{\x \in \mathcal{X}_k, \z \in \mathcal{Z}_k } f_k(\x) + g_k(\z)\,\, \textrm{subject to} \,\, \A \x + \B\z = \c.
\end{equation}
Assume strong duality holds and write the dual problem of~\eqref{admm1} as
\begin{equation}\label{admmdual}
\maximize_{\bnu} -\cf_{e,k}(-\A^\transp \bnu) - \cg_{e,k}(-\B^\transp\bnu) + \c^\transp \bnu.
\end{equation}
Apply now the running bounded Douglas-Rachford splitting~\eqref{tvdr} to~\eqref{admmdual} with the splitting $-\cf_{e,k}(-\A^\transp \bnu) + \c^\transp \bnu$ and $- \cg_{e,k}(-\B^\transp\bnu)$ to obtain the following recursion (the detailed derivation is deferred in the Appendix). 

\begin{enumerate}
\item[1.] Set $\x_1, \z_1, \p_1$ arbitrarily,
\item[2.] \textbf{for} $k>1$ \textbf{do}:

Compute the next approximate fixed point
\begin{equation}\label{tvadmm}
\begin{array}{l}\z_{k+1} =\arg\min_{\mathcal{Z}_k}\{g_k(\z) + \p_k^\transp \B\z + \frac{\lambda}{2}\|\A\x_{k} + \B\z- \c\|^2 \}\\ 
\x_{k+1} =\arg\min_{\mathcal{X}_k}\{ f_k(\x) + \p_k^\transp \A\x + \frac{\lambda}{2}\|\A\x +\A\x_{k} + 2 \B\z_{k+1}- 2\c\|^2 \}\\
\p_{k+1} = \Pi_{\mathcal{B}}(\p_k + \lambda (\A\x_{k+1}+\A\x_{k} +\B z_{k+1}- 2\c)) - \lambda (\A\x_{k+1} - c)\,. \end{array}
\end{equation}
and $\bnu_{k+1} = \p_{k} + \lambda(\A\x_{k+1} + \B\z_{k+1} - \c)$.
\end{enumerate}

We note that~\eqref{tvadmm} is not the usual ADMM iteration, yet it correctly captures the need for a bounding procedure. In the case $\mathcal{B} = \mathbb{R}^m$ then one regain (as expected) a time-varying version of the standard ADMM, 

\begin{enumerate}
\item[1.] Set $\x_1, \z_1, \p_1$ arbitrarily,
\item[2.] \textbf{for} $k>1$ \textbf{do}:

Compute the next approximate fixed point
\begin{equation}\label{tvadmm1}
\begin{array}{l}\x_{k+1} =\arg\min_{\mathcal{X}_k}\{ f_k(\x) + \p_k^\transp (\A\x + B\z_k - \c) + \frac{\lambda}{2}\|\A\x + \B\z_{k}- \c\|^2 \} \\ 
\z_{k+1} =\arg\min_{\mathcal{Z}_k}\{ g_k(\z) + \p_k^\transp (\A\x_{k+1} + B\z - \c) + \frac{\lambda}{2}\|\A\x_{k+1} + \B\z- \c\|^2 \}\\
\bnu_{k+1} = \p_{k+1} = \p_k + \lambda (\A\x_{k+1} +\B z_{k+1}- \c)\, .  \end{array}
\end{equation}
\end{enumerate}

Convergence in the sense of Corollary~\ref{co:dr} for~\eqref{tvadmm}-\eqref{tvadmm1} follows in terms of a dual supporting sequence $\bbeta$ and the dual sequence $\bnu$ (doing the job of $\z$ and $\x$ in Corollary~\ref{co:dr}). The exact details are omitted in the interest of space, yet we report that in the case of strongly convex/strongly smooth $f_k$ one can obtain the following result. 

\begin{corollary}\label{co:admm}
Consider the running ADMM~\eqref{tvadmm1} and the generated sequence $\{\x_k, \z_k, \p_k\}_{k \in \mathbb{N}_{>0}}$. 
%
%
Let Assumption~\ref{as:tv} hold. 
Let the functions $f_k$ be strongly convex with constant $m_k$ and strongly smooth with constant $M_k$ and let $\mathcal{X}_k = \mathbb{R}^n$. Let $\sigma_{\max}$ and $\sigma_{\min}$ the maximum and minimum singular value of $\A$, supposed positive. In this case, the time-varying ADMM~\eqref{tvadmm1} is a contraction, and we obtain dual convergence as: 
\begin{align}\label{result7.2}
\|\p_k - \p_k^*\| &\leq \hat{L}_{k}\,\|\bbeta_1 - \bbeta_1^*\| + \frac{1-\bar{L}^{k-1}_k}{1-\bar{L}_k} \delta, \\ L_k &=1/2\left(1 + \max\left\{\frac{\lambda \sigma_{\max}^2/m_k -1}{\lambda \sigma_{\max}^2/m_k +1}, \frac{1-\lambda \sigma_{\min}^2/M_k}{1+\lambda \sigma_{\min}^2/M_k}\right\}\right), \nonumber
\end{align} 
where $\bbeta_1$ is the supporting variable for~\eqref{admmdual} (doing the job of $\z$ in Corollary~\ref{co:dr}).
\end{corollary}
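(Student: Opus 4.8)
The plan is to exploit the standard equivalence between ADMM applied to the primal problem~\eqref{admm1} and Douglas--Rachford (DR) splitting applied to its dual~\eqref{admmdual}, and then to invoke Corollary~\ref{co:dr}(b) verbatim. Writing the dual as a minimization, $\minimize_{\bnu}\{F(\bnu)+G(\bnu)\}$ with $F(\bnu) = \cf_{e,k}(-\A^\transp\bnu) - \c^\transp\bnu$ and $G(\bnu) = \cg_{e,k}(-\B^\transp\bnu)$, the running bounded DR iteration~\eqref{tvdr} on this split reproduces exactly the recursion~\eqref{tvadmm}, which collapses to~\eqref{tvadmm1} when $\mathcal{B}=\mathbb{R}^m$. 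First I would make this correspondence precise: the DR iterate (the ``$\z$'' of Corollary~\ref{co:dr}) becomes a dual supporting sequence $\bbeta_k$, while the resolvent output (the ``$\x$'' of Corollary~\ref{co:dr}) becomes $\bnu_k$, which coincides with the dual multiplier $\p_k$ in the last line of~\eqref{tvadmm1}.

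The second step is to verify that the ``$f$-side'' function $F$ meets the hypotheses of Corollary~\ref{co:dr}(b) over $\mathbb{R}^m$ and to compute its two curvature constants. Since $f_k$ is strongly convex with constant $m_k$ and, with $\mathcal{X}_k=\mathbb{R}^n$, strongly smooth with constant $M_k$, Lemma~\ref{lemma:scsm}(ii)--(iii) give that $\cf_{e,k}=\cf_k$ is strongly smooth with constant $1/m_k$ and strongly convex with constant $1/M_k$. Composing with the linear map $-\A^\transp$ (and adding the affine term $-\c^\transp\bnu$, which does not affect curvature) scales these constants by the extreme singular values of $\A$: the Lipschitz constant of $\nabla F$ is $\sigma_{\max}^2/m_k$, while $\nabla^2 F = \A\,\nabla^2\cf_k(-\A^\transp\cdot)\,\A^\transp \succeq (\sigma_{\min}^2/M_k)\,\opI$. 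Thus $F$ is strongly smooth with constant $\sigma_{\max}^2/m_k$ and strongly convex with constant $\sigma_{\min}^2/M_k$; here the assumption $\sigma_{\min}>0$ is precisely what preserves strong convexity under the composition (otherwise only strong convexity on $\im\A^\transp$ would survive, as exploited in Corollary~\ref{co:daeq}).

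The third step is to apply Corollary~\ref{co:dr}(b) with the substitution $M_k \mapsto \sigma_{\max}^2/m_k$ and $m_k \mapsto \sigma_{\min}^2/M_k$. This certifies that the DR operator is a contraction with factor exactly
\[
L_k = \frac{1}{2}\left(1 + \max\left\{\frac{\lambda\sigma_{\max}^2/m_k - 1}{\lambda\sigma_{\max}^2/m_k + 1},\ \frac{1-\lambda\sigma_{\min}^2/M_k}{1+\lambda\sigma_{\min}^2/M_k}\right\}\right),
\]
matching the claimed $L_k$ in~\eqref{result7.2}, and yields the bound~\eqref{result2.6} on $\|\bbeta_k - \bbeta_k^*\|$. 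Finally, the non-expansiveness of the resolvent~\eqref{result2.7} gives $\|\bnu_k - \bnu_k^*\| \leq \|\bbeta_k - \bbeta_k^*\|$; since $\p_k = \bnu_k$ in~\eqref{tvadmm1}, chaining these two estimates produces~\eqref{result7.2} with the initialization term expressed through the supporting variable $\bbeta_1$.

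The main obstacle I expect is not the curvature computation, which is routine once Lemma~\ref{lemma:scsm} and the singular-value bounds are in hand, but rather the bookkeeping of the ADMM--DR correspondence: carefully deriving~\eqref{tvadmm1} from the bounded DR recursion~\eqref{tvdr} applied to~\eqref{admmdual} and identifying the supporting sequence $\bbeta_k$ (together with its fixed point $\bbeta_k^*$) whose role is that of $\z$ in Corollary~\ref{co:dr}. Making this identification rigorous, and checking that Assumption~\ref{as:tv} on the DR variable $\bbeta$ is the correct translation of the assumed bounded time variation, is where the care is needed.
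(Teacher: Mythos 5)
Your proposal is correct and follows essentially the same route as the paper: the paper's proof is a one-line appeal to the Douglas--Rachford result~\eqref{result2.6} together with \cite[Corollary~2]{Giselsson2017}, which supplies exactly the contraction factor you derive by hand via Lemma~\ref{lemma:scsm} and the singular-value scaling $M_k\mapsto\sigma_{\max}^2/m_k$, $m_k\mapsto\sigma_{\min}^2/M_k$. Your version simply makes explicit the curvature bookkeeping and the ADMM--DR identification ($\bbeta_k$ playing the role of $\z$, $\p_k=\bnu_k$ from the resolvent step) that the paper delegates to the external citation and to the derivation in its appendix.
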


\begin{proof}
The result follows from~\eqref{result2.6}, and \cite[Corollary~2]{Giselsson2017}. 
\qed
\end{proof} 

Corollaries~\ref{co:dr} and~\ref{co:admm} extend previous work on running ADMM~\cite{Ling2013}, which dealt with the strong convex-strong smooth case, for unconstrained consensus problems. Here we have a much more general characterization. 


\section{Objective convergence for primal time-varying optimization}\label{sec:obj_conv}

We focus now on objective convergence of the primal running algorithm we have presented. Different from fixed point residual convergence (and primal variable convergence under some circumstances), objective convergence cannot be directly derived from Theorem~\ref{theo:1} and \MK's arguments alone (although these results are needed). To tackle objective convergence, one needs an handle on how the function and its derivatives behave locally around a point $\x$, e.g., a Lipschitz descent lemma. 

We develop here results for the running projected gradient~\eqref{tvprojgradient}, proximal point~\eqref{tvfb}, and forward-backward splitting~\eqref{tvprox}, for which such a descent lemma is available. We leave for future research the other (dual) methods. 

We develop the theory in an unified framework, by leveraging the following (known) results. 

\begin{lemma}\emph{(Equivalence of primal methods)}\cite[Section~3.3]{Davis2014}\label{lemma.equival}
The running versions of the projected gradient algorithm~\eqref{tvprojgradient} and the proximal point algorithm~\eqref{tvprox} are special cases of the running forward-backward algorithm~\eqref{tvfb} with the following specifications:
\begin{itemize}
\item In the forward-backward algorithm~\eqref{tvfb}, put the function $g(\x;t) = 0$ to obtain the running projected gradient;  
\item In the forward-backward algorithm~\eqref{tvfb}, put the functions $f(\x;t) = 0$ and $g(\x;t) = f(\x;t)$ (where the second $f(\x;t)$ is the one of the proximal point algorithm) to obtain the proximal point algorithm. 
\end{itemize} 
\end{lemma}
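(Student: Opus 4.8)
The plan is to prove both claims by direct substitution into the definition of the constrained proximal operator, relying only on the fact that the prox is the unique minimizer of a strongly convex subproblem. Recall from~\eqref{tvprox} that
\[
\prox_{g_k,\mathcal{X}_k,\lambda}(\y) = \arg\min_{\x \in \mathcal{X}_k}\Big\{ g_k(\x) + \tfrac{1}{2\lambda}\|\x - \y\|^2 \Big\},
\]
and that the running forward-backward iteration~\eqref{tvfb} simply evaluates this operator at the forward point $\y = \x_k - \lambda\,\partial f_k(\x_k)$. Each of the two asserted reductions then amounts to specializing $f_k$ or $g_k$ and reading off the resulting recursion.

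For the projected-gradient reduction, I would set $g_k \equiv 0$. The objective above collapses to $\tfrac{1}{2\lambda}\|\x - \y\|^2$, whose minimizer over $\mathcal{X}_k$ is by definition the Euclidean projection $\Pi_{\mathcal{X}_k}(\y)$; this minimizer is unique and well defined whenever $\mathcal{X}_k$ is nonempty, closed, and convex, since $\|\x-\y\|^2$ is strongly convex in $\x$. Substituting back $\y = \x_k - \lambda\,\partial f_k(\x_k)$ yields
\[
\x_{k+1} = \Pi_{\mathcal{X}_k}(\opI - \lambda\,\partial f_k)(\x_k),
\]
which is exactly~\eqref{tvprojgradient}. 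For the proximal-point reduction, I would instead set $f_k \equiv 0$ and relabel $g_k := f_k$ (the cost of the proximal-point method). Because $\partial 0 = \{\mathbf{0}\}$, the forward step reduces to the identity, $\x_k - \lambda\,\partial f_k(\x_k) = \x_k$, so~\eqref{tvfb} becomes $\x_{k+1} = \prox_{g_k,\mathcal{X}_k,\lambda}(\x_k) = \prox_{f_k,\mathcal{X}_k,\lambda}(\x_k)$, which is precisely~\eqref{tvprox}.

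I do not anticipate any genuine obstacle: both statements are immediate once the definitions are unfolded, and in fact this is why the result is invoked as a known lemma from~\cite{Davis2014}. The only point meriting a word of care is the first reduction, where one must observe that the zero-function prox coincides with the metric projection $\Pi_{\mathcal{X}_k}$; this is nothing more than the definition of the projection, and it requires only the strong convexity of $\|\cdot - \y\|^2$ to guarantee that the specialized iterate is uniquely defined. With this remark in place, the two bullet points follow by inspection.
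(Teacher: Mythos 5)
Your proof is correct: both reductions follow by direct substitution into the definition of $\prox_{g_k,\mathcal{X}_k,\lambda}$, with the zero-function prox collapsing to the projection $\Pi_{\mathcal{X}_k}$ in the first case and the forward step collapsing to the identity in the second. The paper itself offers no proof of this lemma (it is stated with a citation to \cite[Section~3.3]{Davis2014}), and your definitional unfolding is exactly the argument that citation stands in for.
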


\begin{lemma}\emph{(Joint decrease lemma)}\label{lemma.joint}
Define $h_k(\x): = f_k(\x) + g_k(\x)+i_{\mathcal{X}_k}$ and consider the running forward-backward algorithm~\eqref{tvfb} and the generated sequence $\{\x_{k}\}_{k \in \mathbb{N}_{>0}}$. Let the function $f_k$ be strongly smooth for all $k$ with constant $M_k \in [0,M]$. Then one has, 
\begin{multline}\label{lemma.eq0}
h_k(\x_{k+1})  - h_k(\x^*_k) \leq \frac{1}{2 \lambda}\left(\|\x_{k} - \x_{k}^* \|^2 - \|\x_{k+1} - \x_{k}^* \|^2 \right) + \\\left(\frac{M_k}{2}-\frac{1}{2\lambda} \right) \|\x_{k+1} - \x_{k}\|^2.
\end{multline}
\end{lemma}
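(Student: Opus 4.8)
The plan is to merge the first-order optimality condition of the proximal (backward) step with the descent lemma for the strongly smooth (forward) part, comparing everything against the optimizer $\x^*_k$ so that the gradient contributions cancel.

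First I would unfold the update~\eqref{tvfb}. Writing $\tilde{g}_k := g_k + i_{\mathcal{X}_k}$, which is CCP, the point $\x_{k+1}$ minimizes $\tilde{g}_k(\x) + \frac{1}{2\lambda}\|\x - (\x_k - \lambda \nabla f_k(\x_k))\|^2$. Its optimality condition produces a subgradient $\xi \in \partial \tilde{g}_k(\x_{k+1})$ of the explicit form $\xi = -\nabla f_k(\x_k) - \frac{1}{\lambda}(\x_{k+1}-\x_k)$. Convexity of $\tilde{g}_k$ evaluated at $\x^*_k$ then yields
\[
\tilde{g}_k(\x_{k+1}) \leq \tilde{g}_k(\x^*_k) + \Big(-\nabla f_k(\x_k) - \tfrac{1}{\lambda}(\x_{k+1}-\x_k)\Big)^\transp (\x_{k+1}-\x^*_k).
\]

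Next I would treat the smooth part. The descent lemma implied by $M_k$-strong smoothness gives $f_k(\x_{k+1}) \leq f_k(\x_k) + \nabla f_k(\x_k)^\transp(\x_{k+1}-\x_k) + \frac{M_k}{2}\|\x_{k+1}-\x_k\|^2$, and convexity of $f_k$ gives $f_k(\x_k) \leq f_k(\x^*_k) + \nabla f_k(\x_k)^\transp(\x_k-\x^*_k)$; chaining the two gradient inner products into one leaves
\[
f_k(\x_{k+1}) \leq f_k(\x^*_k) + \nabla f_k(\x_k)^\transp(\x_{k+1}-\x^*_k) + \tfrac{M_k}{2}\|\x_{k+1}-\x_k\|^2.
\]
Summing this with the previous bound and using $h_k = f_k + \tilde{g}_k$, the two copies of $\nabla f_k(\x_k)^\transp(\x_{k+1}-\x^*_k)$ cancel. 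This cancellation is the crux of the argument and the step I expect to be most delicate: it works precisely because both bounds are anchored at the common point $\x^*_k$ and because $\xi$ carries $\nabla f_k(\x_k)$ with the opposite sign. What survives is
\[
h_k(\x_{k+1}) - h_k(\x^*_k) \leq \tfrac{M_k}{2}\|\x_{k+1}-\x_k\|^2 - \tfrac{1}{\lambda}(\x_{k+1}-\x_k)^\transp(\x_{k+1}-\x^*_k).
\]

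Finally I would apply the three-point (polarization) identity $(\x_{k+1}-\x_k)^\transp(\x_{k+1}-\x^*_k) = \frac{1}{2}\big(\|\x_{k+1}-\x_k\|^2 + \|\x_{k+1}-\x^*_k\|^2 - \|\x_k-\x^*_k\|^2\big)$ to the last term and regroup, which delivers exactly~\eqref{lemma.eq0}. Apart from careful sign-bookkeeping in this identity, the only thing to note is that $\x^*_k \in \mathcal{X}_k$ (it is the feasible minimizer), so $i_{\mathcal{X}_k}(\x^*_k) = 0$ and $h_k(\x^*_k)$ is finite; no further obstacle is anticipated.
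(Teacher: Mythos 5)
Your proof is correct. The paper itself does not write out an argument for this lemma: it disposes of it in one line by citing the proof of Theorem~3 in the Davis--Yin reference, which it describes as using a ``joint Lipschitz descent lemma.'' What you have written is precisely the standard derivation behind that citation: the subgradient $\xi = -\nabla f_k(\x_k) - \tfrac{1}{\lambda}(\x_{k+1}-\x_k) \in \partial \tilde g_k(\x_{k+1})$ from the prox optimality condition, the descent lemma for the $M_k$-smooth part chained with convexity of $f_k$ anchored at $\x_k^*$, the cancellation of the two $\nabla f_k(\x_k)^\transp(\x_{k+1}-\x_k^*)$ terms, and the three-point identity to convert $-\tfrac{1}{\lambda}(\x_{k+1}-\x_k)^\transp(\x_{k+1}-\x_k^*)$ into the telescoping squared distances plus the $-\tfrac{1}{2\lambda}\|\x_{k+1}-\x_k\|^2$ correction. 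The algebra checks out and reproduces~\eqref{lemma.eq0} exactly, including the $M_k=0$ case relevant to the proximal-point specialization. So your proposal is not a different route so much as a self-contained expansion of the argument the paper delegates to the external reference; if anything, it is the more useful version to have on record.
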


\begin{proof}
Follows directly from the proof of~\cite[Theorem~3]{Davis2014}, which uses a joint Lipschitz descent lemma. 
\qed
\end{proof}

To handle a time-varying cost function, we will need to fix a \emph{universal} scaling: the cost function will change continuously in time as well as the optimizer set, however, the optimal value can (and will) be considered constant (i.e., one can rescale or shift the cost functions, such that the optimal value is constant in time without loss of generality). 

We will further assume the following. 

\begin{assumption}\emph{(Bounded functional changes)}\label{as.new}
The cost function $h_{k}(\x)$'s changes in time are upper bounded by a finite scalar $\sigma \geq 0$, as
$$
|h_{k+1}(\x) - h_{k}(\x)| \leq \sigma, \textrm{ for all } \x \in \textrm{dom}(h_k) \cap \textrm{dom}(h_{k+1}), \textrm{ and for all } k.
$$
\end{assumption}

Assumption~\ref{as.new} is an additional assumption w.r.t. Assumption~\ref{as:tv}, that pertains the variations of the cost function. It is rather easy to see that Assumption~\ref{as:tv} does not implies Assumption~\ref{as.new}, and therefore the latter is needed. Assumption~\ref{as.new} allows one to track how the cost function changes in time, by measuring the variations at specific points $\x$ in the domain of the function. Since $h_k(\x): \mathbb{R}^n \to \mathbb{R}\cup{\{+\infty\}}$, to make Assumption~\ref{as.new} hold, $\x$ has to belong to the domain of $h_{k}$ and $h_{k+1}$, which means that at least $\x \in \mathcal{X}_{k+1} \cap \mathcal{X}_{k}$, for all $k$. A special case, which we will consider here is that $\mathcal{X}_k$ is time-invariant, and thus the domain of $i_{\mathcal{X}_k}$ is the same for all $k$. 

We are now ready for the main result of this section. 

\begin{proposition}\label{co:fb_obj}
Consider the running forward-backward splitting as it has been defined in~\eqref{tvfb} and the generated sequence $\{\x_k\}_{k \in \mathbb{N}_{>0}}$. Define $F_k(\x):= f_k(\x) + g_k(\x)$. Let $f_k$ be strongly smooth for all $k$ with constant $M_k\in [0,M]$ (which implies $\partial f_k = \nabla f_k$).  Fix $\lambda \in (0, 2/M)$. Let Assumption~\ref{as:tv} hold.
Let the sets $\mathcal{X}_k$ be compact, and therefore bounded, and let $X$ be defined as $X = \max_{k} \|\mathcal{X}_k\|$.  Further assume that the sets $\mathcal{X}_k$ are time-invariant, i.e.,  $\mathcal{X}_k = \mathcal{X}$, and let Assumption~\ref{as.new} hold for a certain $\sigma\geq0$. 
Define $\underline{a}:= \min_k (1-\alpha_k)/\alpha_k$, where $\alpha_k = 1/(2-\lambda M_k/2)$. Then, the objective sequence $\{F_k(\x_k)\}_{k \in \mathbb{N}_{>0}}$ converges as
\begin{multline}\label{obj_result1}
\frac{1}{T}\,\sum_{k=1}^T\, F_{k+1}(\x_{k+1})  - F_{k+1}(\x^*_{k+1}) \leq \\ \left\{
\begin{array}{lr}
\displaystyle\frac{1}{2\lambda T} \|\x_{1} - \x_{1}^* \|^2 + \frac{\delta}{2 \lambda} (4 X + \delta) + \sigma, & \qquad \textrm{for } \lambda \leq 1/M_k,\\ &\\
\displaystyle\frac{C}{2\lambda T}\|\x_{1} - \x_{1}^* \|^2 + \frac{\delta C }{{2 \lambda}} (4 X + \delta) + \sigma, & \qquad  \textrm{otherwise,}
\end{array}
\right.\, 
\end{multline} 
with $C = \left(1 + \frac{\lambda M_k - 1}{\underline{a}}\right)$.
\end{proposition}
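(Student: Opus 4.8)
The plan is to convert the time-$(k+1)$ objective gap into a time-$k$ gap, feed that into the joint decrease lemma, and then sum with a telescoping-plus-drift argument in the same spirit as the proof of Theorem~\ref{theo:1}.

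First I would exploit the two standing structural facts. Since $\mathcal{X}_k = \mathcal{X}$ is time-invariant and both the iterate $\x_{k+1}$ and every optimizer $\x^*_\tau$ lie in $\mathcal{X}$, the indicator contributes nothing at these points, so $h_k(\x_{k+1}) = F_k(\x_{k+1})$ and $h_k(\x^*_k) = F_k(\x^*_k)$. Using the universal scaling that fixes the optimal value constant in time, $F_{k+1}(\x^*_{k+1}) = F_k(\x^*_k)$, while Assumption~\ref{as.new} evaluated at $\x_{k+1}\in\mathcal{X}\subseteq\mathrm{dom}(h_k)\cap\mathrm{dom}(h_{k+1})$ gives $F_{k+1}(\x_{k+1}) \leq F_k(\x_{k+1}) + \sigma$. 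Chaining these yields $F_{k+1}(\x_{k+1}) - F_{k+1}(\x^*_{k+1}) \leq [F_k(\x_{k+1}) - F_k(\x^*_k)] + \sigma$, and Lemma~\ref{lemma.joint} bounds the bracket by $\frac{1}{2\lambda}(\|\x_k-\x^*_k\|^2 - \|\x_{k+1}-\x^*_k\|^2) + (\frac{M_k}{2}-\frac{1}{2\lambda})\|\x_{k+1}-\x_k\|^2$.

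Next I would handle the sign of the coefficient $\frac{M_k}{2}-\frac{1}{2\lambda}$. When $\lambda \leq 1/M_k$ this term is nonpositive and is simply discarded, leaving the coefficient $\frac{1}{2\lambda}$. When $\lambda > 1/M_k$ the term is positive and must be controlled: here I would use the defining $\alpha_k$-averaged inequality for $\opT_k$, namely $\|\x_{k+1}-\x^*_k\|^2 \leq \|\x_k-\x^*_k\|^2 - \frac{1-\alpha_k}{\alpha_k}\|\x_{k+1}-\x_k\|^2$, rearranged into $\|\x_{k+1}-\x_k\|^2 \leq \frac{\alpha_k}{1-\alpha_k}(\|\x_k-\x^*_k\|^2 - \|\x_{k+1}-\x^*_k\|^2) \leq \frac{1}{\underline{a}}(\|\x_k-\x^*_k\|^2 - \|\x_{k+1}-\x^*_k\|^2)$, valid since $\frac{1-\alpha_k}{\alpha_k}\geq\underline{a}$ and the difference is nonnegative. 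Substituting and factoring out the common difference turns the coefficient into exactly $\frac{1}{2\lambda}(1 + \frac{\lambda M_k - 1}{\underline{a}}) = \frac{C}{2\lambda}$. In both cases I reach $F_{k+1}(\x_{k+1}) - F_{k+1}(\x^*_{k+1}) \leq \frac{c}{2\lambda}(\|\x_k-\x^*_k\|^2 - \|\x_{k+1}-\x^*_k\|^2) + \sigma$ with $c\in\{1,C\}$.

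Finally I would sum over $k=1,\dots,T$. The expected main obstacle is that $\sum_k(\|\x_k-\x^*_k\|^2 - \|\x_{k+1}-\x^*_k\|^2)$ does not telescope, because the optimizer drifts from $\x^*_k$ to $\x^*_{k+1}$. I would repair this exactly as in Theorem~\ref{theo:1}: write $\|\x_k-\x^*_k\|^2 - \|\x_{k+1}-\x^*_k\|^2 = (\|\x_k-\x^*_k\|^2 - \|\x_{k+1}-\x^*_{k+1}\|^2) + (\|\x_{k+1}-\x^*_{k+1}\|^2 - \|\x_{k+1}-\x^*_k\|^2)$, where the first part telescopes to at most $\|\x_1-\x^*_1\|^2$, and the second part, expanded as $2(\x_{k+1}-\x^*_k)^\transp(\x^*_k-\x^*_{k+1}) + \|\x^*_k-\x^*_{k+1}\|^2$, is bounded by $\delta(4X+\delta)$ via Assumption~\ref{as:tv} ($\|\x^*_k-\x^*_{k+1}\|\leq\delta$), Cauchy--Schwarz, and the compactness bounds $\|\x_{k+1}\|,\|\x^*_k\|\leq X$. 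Dividing by $T$ then produces the two stated bounds. As a final bookkeeping remark I would note that $C$ can be rendered uniform in $k$ by using $M_k\leq M$ whenever $\lambda M_k - 1 > 0$.
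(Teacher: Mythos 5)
Your proof is correct and follows essentially the same route as the paper's: Lemma~\ref{lemma.joint} for the per-step decrease, Assumption~\ref{as.new} to shift the objective gap from time $k$ to $k+1$, and the drift-corrected telescoping of Theorem~\ref{theo:1} to handle the moving optimizer. The only cosmetic difference is that you bound $\|\x_{k+1}-\x_k\|^2$ per iteration directly from the $\alpha$-averagedness inequality, whereas the paper invokes the already-averaged bound of Corollary~\ref{co:fb}(a) — but that corollary is proved from the very same inequality, so the two arguments coincide.
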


\begin{proof}
We start from~\eqref{lemma.eq0}, multiply by $2 \lambda >0$ and taking the average over time $T$, 
\begin{multline}\label{lemma.eq1}
\frac{1}{T}\,\sum_{k=1}^T\, 2 \lambda( h_k(\x_{k+1})  - h_k(\x^*_k)) \leq \frac{1}{T}\,\sum_{k=1}^T\, \left(\|\x_{k} - \x_{k}^* \|^2 - \|\x_{k+1} - \x_{k}^* \|^2 \right) + \\ \frac{1}{T}\,\sum_{k=1}^T\,  (\lambda{M_k}-1) \|\x_{k+1} - \x_{k}\|^2.
\end{multline}
By using the same development of the proof of Theorem~\ref{theo:1} and in particular Equations~\eqref{dummy:0} till \eqref{dummy:6969}, we can bound 
\begin{equation}\label{bound-1}
\|\x_{k} - \x_{k}^* \|^2 - \|\x_{k+1} - \x_{k}^* \|^2 \leq \|\x_{k} - \x_{k}^* \|^2 - \|\x_{k+1} - \x_{k+1}^* \|^2 + \delta (4 X + \delta).
\end{equation}
In addition, by Corollary~\ref{co:fb} part (a), 
\begin{equation}\label{bound-2}
\frac{1}{T}\,\sum_{k=1}^T\, \|\x_{k+1} - \x_{k}\|^2 \leq \frac{1}{\underline{a}T}{\|\x_1 - \x^*_1\|^2} + \frac{\delta}{\underline{a}}\,(4 X + \delta),
\end{equation}
where we have substituted $\x_{k+1} = \prox_{g_k,\mathcal{X}_k, \lambda}(\x_k - \lambda \nabla f_k(\x_k))$. Furthermore, by Assumption~\ref{as.new}, 
\begin{equation}\label{bound-3}
h_k(\x_{k+1})  - h_k(\x^*_k) \geq h_{k+1}(\x_{k+1})  - h_{k+1}(\x^*_{k+1}) - \sigma.
\end{equation}
By putting together the bounds~\eqref{bound-1}, \eqref{bound-2}, and~\eqref{bound-3} in~\eqref{lemma.eq1}, we obtain
\begin{multline}\label{lemma.eq1-0}
\frac{1}{T}\,\sum_{k=1}^T\, h_{k+1}(\x_{k+1})  - h_{k+1}(\x^*_{k+1}) \leq \frac{1}{2 \lambda T}\|\x_{1} - \x_{1}^* \|^2 + \frac{\delta}{2 \lambda} (4 X + \delta) + \\ \left(\frac{\lambda M_k - 1}{2 \lambda}\right)\left(\frac{1}{\underline{a}T}{\|\x_1 - \x^*_1\|^2} + \frac{\delta}{\underline{a}}\,(4 X + \delta)\right) + \sigma.
\end{multline}
By noticing that $\x_k$ is feasible for problem $k$ for all $k$, then $h_k \equiv F_k$, which yields the result. \qed
\end{proof}

From which the following corollaries can be readily obtained.

\begin{corollary}\label{co:gr-obj}
Under the same conditions of Proposition~\ref{co:fb_obj}, the running proximal point defined in~\eqref{tvprojgradient}, whose objective sequence is $\{f_k(\x_k)\}_{k \in \mathbb{N}_{>0}}$, converges as~\eqref{obj_result1} with $F_k = f_k$. 
\end{corollary}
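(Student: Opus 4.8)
The plan is to obtain the result as a direct specialization of Proposition~\ref{co:fb_obj}, exploiting the equivalence of primal methods recorded in Lemma~\ref{lemma.equival}. First I would set $g(\x;t)=0$ in the running forward-backward splitting~\eqref{tvfb}. By Lemma~\ref{lemma.equival}, this choice reduces~\eqref{tvfb} exactly to the running projected gradient~\eqref{tvprojgradient}, because the backward (proximal) step of the zero function restricted to $\mathcal{X}_k$ collapses to the Euclidean projection, $\prox_{0,\mathcal{X}_k,\lambda}=\Pi_{\mathcal{X}_k}$, so that $\x_{k+1}=\Pi_{\mathcal{X}_k}(\x_k-\lambda\nabla f_k(\x_k))$.

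Next I would verify that every hypothesis of Proposition~\ref{co:fb_obj} is inherited under this specialization. The strong smoothness of $f_k$ with constant $M_k\in[0,M]$, the step-size restriction $\lambda\in(0,2/M)$, Assumption~\ref{as:tv}, the compactness and time-invariance of the sets $\mathcal{X}_k$, and the bounded functional change Assumption~\ref{as.new} are all assumed in the statement of the corollary, so they transfer verbatim. With $g_k\equiv0$ the composite objective becomes $F_k(\x)=f_k(\x)+g_k(\x)=f_k(\x)$ and the extended function becomes $h_k(\x)=f_k(\x)+i_{\mathcal{X}_k}$, so Assumption~\ref{as.new} is precisely a bound on the changes of $f_k$ over $\mathcal{X}$.

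Finally I would substitute $F_k=f_k$ into the conclusion~\eqref{obj_result1} of Proposition~\ref{co:fb_obj}: the left-hand side $\frac{1}{T}\sum_{k=1}^T F_{k+1}(\x_{k+1})-F_{k+1}(\x^*_{k+1})$ becomes $\frac{1}{T}\sum_{k=1}^T f_{k+1}(\x_{k+1})-f_{k+1}(\x^*_{k+1})$, which is the objective gap for the sequence $\{f_k(\x_k)\}$, while the right-hand side is unchanged. This yields the claimed bound. There is essentially no analytical obstacle here: the only point requiring care is confirming that the proximal step for $g=0$ together with the indicator $i_{\mathcal{X}_k}$ is the projection onto $\mathcal{X}_k$, which is exactly the content invoked by Lemma~\ref{lemma.equival}, so no new descent estimate needs to be re-derived.
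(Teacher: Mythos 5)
Your proposal is correct and matches the paper's own proof, which likewise cites Lemma~\ref{lemma.equival} (setting $g_k\equiv 0$ to recover the projected gradient) together with Proposition~\ref{co:fb_obj} and substitutes $F_k=f_k$. The extra detail you supply — that $\prox_{0,\mathcal{X}_k,\lambda}=\Pi_{\mathcal{X}_k}$ and that all hypotheses transfer verbatim — is exactly the content the paper leaves implicit.
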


\begin{proof}
Directly from~Proposition~\ref{co:fb_obj} and Lemma~\ref{lemma.equival}. \qed
\end{proof}

\begin{corollary}\label{co:pp-obj}
Under the same conditions of Proposition~\ref{co:fb_obj}, the running proximal point defined in~\eqref{tvprox}, whose objective sequence is $\{f_k(\x_k)\}_{k \in \mathbb{N}_{>0}}$, converges as
\begin{multline}\label{obj_result2}
\frac{1}{T}\,\sum_{k=1}^T\, f_{k+1}(\x_{k+1})  - f_{k+1}(\x^*_{k+1}) \leq 
\frac{1}{2\lambda T} \|\x_{1} - \x_{1}^* \|^2 + \frac{\delta}{2 \lambda} (4 X + \delta) + \sigma.
\end{multline} 
\end{corollary}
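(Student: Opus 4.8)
The plan is to obtain~\eqref{obj_result2} directly from Proposition~\ref{co:fb_obj} via the equivalence recorded in Lemma~\ref{lemma.equival}, just as Corollary~\ref{co:gr-obj} does for the gradient case; the single point requiring care is to verify that we land in the first (cleaner) branch of~\eqref{obj_result1} rather than the branch carrying the constant $C$.

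First I would apply Lemma~\ref{lemma.equival} to view the running proximal point iteration~\eqref{tvprox} as an instance of the running forward--backward iteration~\eqref{tvfb} in which the forward term is the zero function and the backward term is the proximal point objective. Under this identification the forward--backward data are $f_k \equiv 0$ and $g_k = f_k^{\mathrm{prox}}$, writing $f_k^{\mathrm{prox}}$ for the function appearing in~\eqref{tvprox}; hence the composite objective of Proposition~\ref{co:fb_obj} is $F_k = f_k + g_k = f_k^{\mathrm{prox}}$, and the left-hand side of~\eqref{obj_result1} becomes exactly the left-hand side of~\eqref{obj_result2}.

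Next I would fix the smoothness constant. The forward term is the zero function, whose gradient is Lipschitz with constant $M_k = 0$. The branch condition $\lambda \le 1/M_k$ in~\eqref{obj_result1} is then vacuously met, since the threshold $1/M_k$ equals $+\infty$, so the first branch applies. Equivalently, inspecting the key inequality~\eqref{lemma.eq1-0} in the proof of Proposition~\ref{co:fb_obj}, the factor $\lambda M_k - 1$ reduces to $-1 < 0$ when $M_k = 0$, so the residual-weighted term there is non-positive and may be discarded; what survives is precisely the right-hand side of~\eqref{obj_result2}. All remaining hypotheses of Proposition~\ref{co:fb_obj}---Assumption~\ref{as:tv}, Assumption~\ref{as.new}, the compact time-invariant constraint set $\mathcal{X}_k = \mathcal{X}$, and admissibility of $\lambda$---are inherited from the statement, so the bound follows.

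I expect no genuine obstacle, since the result is a specialization. The only subtlety worth flagging is the behavior of the branch condition at $M_k = 0$: one must confirm that this value places us unambiguously in the first case without a separate limiting argument, which is immediate once one observes that the extra term in~\eqref{lemma.eq1-0} carries the sign of $\lambda M_k - 1$, which is strictly negative here.
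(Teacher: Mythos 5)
Your proposal is correct and matches the paper's own argument: the paper likewise derives the bound directly from Proposition~\ref{co:fb_obj} and Lemma~\ref{lemma.equival}, observing that the proximal point method corresponds to $M_k = 0$, hence $\lambda M_k - 1 \leq 0$ and the first branch of~\eqref{obj_result1} applies. Your additional care in checking the branch condition at $M_k=0$ via the sign of the residual-weighted term in~\eqref{lemma.eq1-0} is a sound elaboration of the same reasoning.
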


\begin{proof}
Directly from~Proposition~\ref{co:fb_obj} and Lemma~\ref{lemma.equival}, given that for the proximal point method $M_k = 0$ for all $k$, and therefore $\lambda M_k -1 \leq 0$ for all $k$. \qed
\end{proof}

Proposition~\ref{co:fb_obj} and Corollaries~\ref{co:gr-obj}-\ref{co:pp-obj} express the objective convergence of three of the presented running methods. In particular, convergence in average sense goes as $O(1/T)$ up to an error bound which depends on the variability of the optimization problem.


\section{Numerical example}\label{sec:num}

In this section, we display a numerical scenario depicting the behavior of the running version of ADMM that we have proposed in this paper, i.e.~\eqref{tvadmm}. The example is taken from a signal processing application: distributed time-varying localization via range measurement in wireless sensor networks. The example and its distributed implementation via convex relaxations and ADMM are developed in the time-invariant setting in~\cite{Simonetto2014}. Here, we only briefly present the problem and introduce its time-varying counterpart.

\subsection{Localization via range measurement}

We consider a network of $N$ static wireless sensor nodes with computation and communication capabilities, living in a $2$-dimensional space. We denote the set of all nodes $\mathcal{V} = \{1, \dots, N\}$. Let $\x_{(i)} \in \mathbb{R}^D$ be the position vector of the $i$-th sensor node, or equivalently, let $\X = [\x_{(1)} , \dots , \x_{(N)}] \in \mathbb{R}^{D\times N}$ be the matrix collecting the position vectors. We consider an environment with line-of-sight conditions between the nodes and we assume that some pairs of sensor nodes $(i,j)$ have access to noisy range measurements as	
\begin{equation}
r_{(i),(j)} = d_{(i),(j)} + \nu_{(i),(j)},
\label{eq.noisymeasurement}
\end{equation}
where $d_{(i),(j)} = \|\x_{(i)} - \x_{(j)}\|$ is the noise-free Euclidean distance and $\nu_{(i),(j)}$ is an additive noise term with known probability density function (PDF). We call the inter-sensor sensing PDF as $p_{(i),(j)}(d_{(i),(j)}(\x_{(i)},\x_{(j)})|r_{(i),(j)})$, where we have indicated explicitly the dependence of $d_{(i),(j)}$ on the sensor node positions $(\x_{(i)},\x_{(j)})$. 

In addition, we consider that some sensors also have access to noisy range measurements with some fixed anchor nodes (whose position $\a_{(l)}$, for $l \in \{1,\dots,M\}$, is known by all the neighboring sensor nodes of each $\a_{(l)}$) as  
\begin{equation}
v_{(i),(l)} = e_{(i),(l)} + \mu_{(i),(l)},
\label{eq.noisyanchormeasurement}
\end{equation}
where, $e_{(i),(l)} = \|\x_{(i)} - \a_{(l)}\|$ is the noise-free Euclidean distance and $\mu_{(i),(l)}$ is an additive noise term with known probability distribution. We denote as $p_{{(i),(l)},\mathrm{a}}(e_{(i),(l)}(\x_{(i)},\a_{(l)})|v_{(i),(l)})$ the anchor-sensor sensing PDF.

We use graph theory terminology to characterize the set of sensor nodes $\mathcal{V}$ and the measurements $r_{(i),(j)}$ and $v_{(i),(l)}$. In particular, we say that the measurements $r_{(i),(j)}$ induce a graph with $\mathcal{V}$ as vertex set, i.e., for each sensor node pair $(i,j)$ for which there exists a measurement $r_{(i),(j)}$, there exists an edge connecting $i$ and $j$. The set of all edges is $\mathcal{E}$ and its cardinality is $E$. We denote this undirected graph as $\mathcal{G} = (\mathcal{V}, \mathcal{E})$. The neighbors of sensor node $i$ are the sensor nodes that are connected to $i$ with an edge. The set of these neighboring nodes is indicated with $\mathcal{N}_i$, that is $\mathcal{N}_i = \{j| (i, j) \in \mathcal{E}\}$. Since the sensor nodes are assumed to have communication capabilities, we implicitly assume that each sensor node $i$ can communicate with all the sensors in $\mathcal{N}_i$, and with these only. In a similar fashion, we collect the anchors in the vertex set $\mathcal{V}_\mathrm{a} = \{1, \dots, M\}$ and we say that the measurements $v_{(i),(l)}$ induce an edge set $\mathcal{E}_\mathrm{a}$, composed by the pairs $(i,k)$ for which there exists a measurement $v_{(i),(l)}$. Also, we denote with $\mathcal{N}_{i, \mathrm{a}}$ the neighboring anchors for sensor node $i$, i.e., $\mathcal{N}_{i, \mathrm{a}} = \{l| (i, l) \in \mathcal{E}_\mathrm{a}\}$.

\textbf{Problem Statement.} The sensor network localization problem is formulated as estimating the position matrix $\X$ (in some cases, up to an orthogonal transformation) given the measurements $r_{(i),(j)}$ and $v_{(i),(l)}$ for all $(i,j)\in \mathcal{E}$ and $(i,l) \in \mathcal{E}_{\mathrm{a}}$, and the anchor positions $\a_{(l)}$, $l \in \mathcal{V}_{\mathrm{a}}$. The sensor network localization problem can be written in terms of maximizing the likelihood leading to the following optimization problem
\begin{multline}
\hskip-0.3cm\X_{\textrm{ML}}^* = \arg\hskip-0.1cm\max_{\hskip-0.4cm \X\in\mathbb{R}^{D\times N}} \left\{\sum_{(i,j) \in\mathcal{E}} \ln p_{(i),(j)}(d_{(i),(j)}(\x_{(i)},\x_{(j)})|r_{(i),(j)}) + \right.\\ \left.\sum_{(i,l)\in\mathcal{E}_{\mathrm{a}}} \ln p_{(i),(l),\mathrm{a}}(e_{(i),(l)}(\x_{(i)},\a_{(l)})|v_{(i),(l)})\right\}.
\label{eq.mle}
\end{multline}

The problem at hand is nonconvex and NP-Hard, even in the case of Gaussian noise. In~\cite{Simonetto2014}, we have proposed a technique to relax the problem into a convex semidefinite program and we have use ADMM to distribute the solution of this relaxed problem among the nodes themselves. In particular, each node, while communicating only with its neighbors can determine its own location. 

\subsection{Time-varying problem}

Here, we consider a (per-snapshot) time-varying extension of the problem, where we would like to solve the nonconvex
\begin{multline}
\hskip-0.3cm\X_{\textrm{ML}}^*(t) = \arg\hskip-0.1cm\max_{\hskip-0.4cm \X\in\mathbb{R}^{D\times N}} \left\{\sum_{(i,j) \in\mathcal{E}} \ln p_{(i),(j)}(d_{(i),(j)}(\x_{(i)},\x_{(j)})|r_{(i),(j)}(t)) +\right.\\\left.  \sum_{(i,l)\in\mathcal{E}_{\mathrm{a}}} \ln p_{(i),(l),\mathrm{a}}(e_{(i),(l)}(\x_{(i)},\a_{(l)}(t))|v_{(i),(l)}(t))\right\}.
\label{eq.mle}
\end{multline}
where now the measurements $r_{(i),(j)}(t)$ and $v_{(i),(l)}(t)$ as well as the anchor positions $\a_{(l)}(t)$ change in time.

We sample the problems at $t_k, k = 1, 2, \dots$ and for each of them, we proceed in the same way as~\cite{Simonetto2014} and produce ADMM iterations, which can be implemented in a distributed way. The resulting scheme is a per-snapshot running ADMM as~\eqref{tvadmm},  whose convergence is encoded in Section~\ref{sec:admm}. Note that the resulting convex problem in~\cite{Simonetto2014} is a constrained one, so one should apply~\eqref{tvadmm}. 

The numerical results of such setting are represented in Figures~\ref{fig.1} and \ref{fig.2} for the following settings: $8$ nodes and $5$ anchors randomly deployed in the box $[-.5, 0.5]^2$ (the maximum number of neighbors is $3$), Gaussian noise for all the measurements with the same standard deviation $\sigma = 0.1$. All the nodes and anchors are moving along a circular path center in the origin with angular speed $\omega$ (different in different simulations), and the sampling period is $1$. All the decision variables of the running ADMM are initialized at $0$, and $\lambda$ is picked as $0.3$. The set $\mathcal{B}$ is taken as the whole $\mathbb{R}^m$ (to show that in this computational example, the choice of $\mathcal{B}$ does not influence convergence). Further details on the simulation setup are given in~\cite{Simonetto2014}. 

As we can see from Figures~\ref{fig.1} and \ref{fig.2}, the proposed running ADMM converges in primal sense and eventually reaches an error floor. The tracking error is defined as 
\begin{equation}
\textrm{Tracking Error}(k) = \sum_{i=1}^{N} \|\x_{(i),k} - \x_{(i)}^*(k)\|^2,
\end{equation}
where $\x_{(i)}^*(k)$ is the centralized optimal solution at time $t_k$ and $\x_{(i),k}$ is the $k$-th iterate of the recursion~\eqref{tvadmm}. 

We see also how the nodes are able to find and track the optimizer of a time-varying optimization problem up to a bounded error (depending on the angular speed-- that is depending on the variability of the optimizers $\delta$), in a distributed fashion (i.e., by talking only to their neighbors). 

\begin{figure}
\psfrag{X}{\scriptsize $x$}
\psfrag{Y}{\scriptsize $y$}
\psfrag{t1}[c]{\scriptsize $k = 2$}
\psfrag{t2}[c]{\scriptsize $k = 4$}
\psfrag{t3}[c]{\scriptsize $k = 6$}
\psfrag{t4}[c]{\scriptsize $k = 8$}
\psfrag{t5}[c]{\scriptsize $k = 32$}
\psfrag{t6}[c]{\scriptsize $k = 64$}
\includegraphics[width=\textwidth, trim=0cm 0cm 0cm 0cm,clip=on]{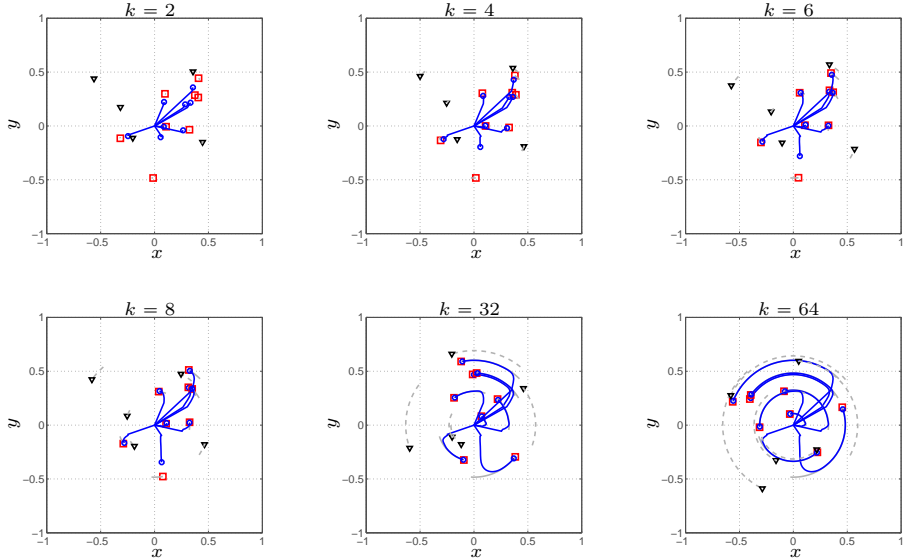}
\caption{Tracking capabilities of running ADMM with $\omega = \pi/100$. The different snapshots refer to different sampling instances $t_k$, $k = 2, 4, 6, 8, 32, 64$. The black triangles are anchor positions, the red squares are the node positions computed by an exact centralized optimization per snap-shot, the blue circles are the positions computed by the running ADMM. Lines represent the trajectories while the algorithm runs. }
\label{fig.1}
\end{figure}

\begin{figure}
\centering
\psfrag{x}[c]{Iterations $k$}
\psfrag{y}[c]{Tracking Error}
\psfrag{aaaa}[l]{$\omega = 0$ (static)}
\psfrag{bbbb}[l]{$\omega = \pi/200$ }
\psfrag{cccc}[l]{$\omega = \pi/100$ }
\psfrag{dddddddddd}[l]{$\omega = \pi/50$}
\includegraphics[width=.6\textwidth, trim=0cm 0cm 0cm 0cm,clip=on]{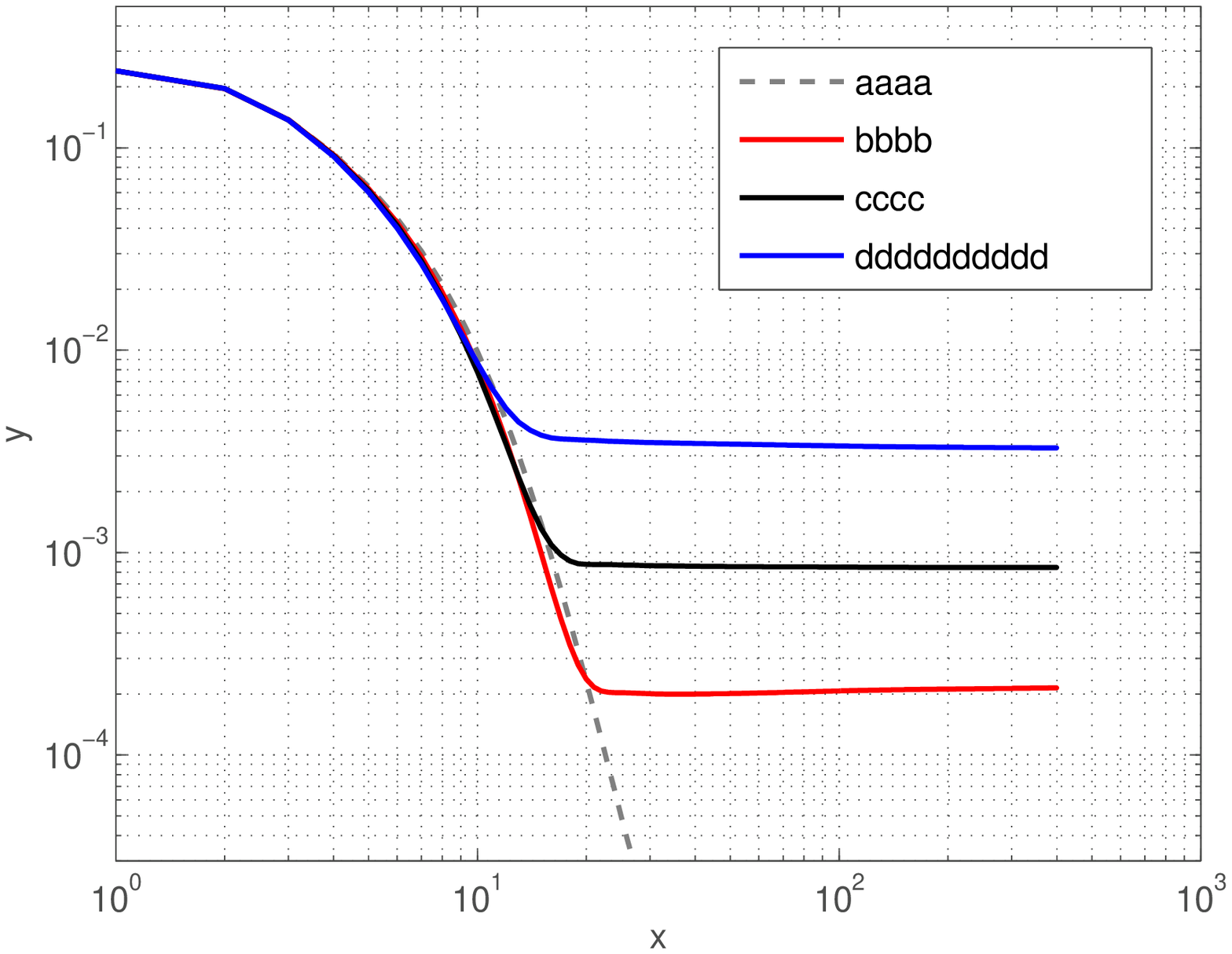}
\caption{Tracking error vs. number of iterations for different angular speeds $\omega$.}
\label{fig.2}
\end{figure}

\begin{remark}
The aim of the simulation results is to show that the theory developed in this paper can be applied to a fairly complex convex optimization problem, with semidefinite constraints, and running in a distributed fashion. More about the application example (especially in a mobile setting) can be found in~\cite{Jamali-Rad2012,Simonetto2014a}.
\end{remark}

\section{Conclusions}\label{sec:concl}

We have presented a general framework for time-varying optimization problems leveraging averaged operator theory. Our main meta-algorithm is the time-varying version of the fixed point algorithm, here renamed running \Mk algorithm. With this in place, we have derived a number of convergence results for running version of commonly used algorithms in convex optimization. 


\section*{Appendix: Proofs of Theorems~\ref{theo:1} and ~\ref{theo:3}}

Before tackling the proof of Theorem~\ref{theo:1} a technical lemma is needed.

\begin{lemma}\emph{(Triangle equality, \cite{Ryu2015})} For any scalar $\theta \in \mathbb{R}$, and vectors $\a,\b\in \mathbb{R}^n$, the following equality holds true
\begin{equation}\label{lemma:eq}
\|(1-\theta)\a + \theta \b\|^2 = (1-\theta)\|\a\|^2 + \theta \|\b\|^2 - \theta(1-\theta) \|\a - \b\|^2.
\end{equation}
\end{lemma}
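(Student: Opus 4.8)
The plan is to establish the identity by a direct expansion of both sides in terms of Euclidean inner products, relying only on the elementary rule $\|\uu+\v\|^2 = \|\uu\|^2 + 2\uu^\transp\v + \|\v\|^2$ valid in $\mathbb{R}^n$. There is no genuine obstacle here: the statement is a purely algebraic identity (a scalar-parameter instance of the polarization/parallelogram law), so the entire argument reduces to expanding and then collecting the coefficients of $\|\a\|^2$, $\|\b\|^2$, and the cross term $\a^\transp\b$, and checking that the two sides agree coefficient by coefficient. The identity holds for every real $\theta$, so no restriction such as $\theta\in(0,1)$ is used.

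First I would expand the left-hand side, treating $(1-\theta)\a$ and $\theta\b$ as the two summands, to obtain
\[
\|(1-\theta)\a + \theta\b\|^2 = (1-\theta)^2\|\a\|^2 + 2\theta(1-\theta)\,\a^\transp\b + \theta^2\|\b\|^2.
\]
Next I would expand the last term on the right-hand side through $\|\a-\b\|^2 = \|\a\|^2 - 2\,\a^\transp\b + \|\b\|^2$, so that the right-hand side reads
\[
(1-\theta)\|\a\|^2 + \theta\|\b\|^2 - \theta(1-\theta)\big(\|\a\|^2 - 2\,\a^\transp\b + \|\b\|^2\big).
\]

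Finally I would collect terms on this last expression. The coefficient of $\|\a\|^2$ is $(1-\theta) - \theta(1-\theta) = (1-\theta)^2$, the coefficient of $\|\b\|^2$ is $\theta - \theta(1-\theta) = \theta^2$, and the coefficient of $\a^\transp\b$ is $2\theta(1-\theta)$. These match exactly the three coefficients obtained from expanding the left-hand side, which establishes the claimed equality. The only point requiring any care is the bookkeeping of signs when distributing $-\theta(1-\theta)$ across the expansion of $\|\a-\b\|^2$; everything else is immediate.
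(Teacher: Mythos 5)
Your proof is correct: the coefficient bookkeeping checks out ($(1-\theta)-\theta(1-\theta)=(1-\theta)^2$, $\theta-\theta(1-\theta)=\theta^2$, and the cross term $2\theta(1-\theta)$ on both sides), and the paper itself offers no proof of this lemma, simply citing the reference, so your direct expansion is exactly the standard argument one would supply.
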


\vskip2mm

\begin{proof} \emph{(Of Theorem~\ref{theo:1})}

\noindent Case \emph{(a)}. Starting with the basic iteration~\eqref{tvfixedpoint},
\begin{equation}
\|\x_{k+1} - \x\|^2 = \|\opT(\x_k) - \x\|^2 =  \|\x_k + \alpha_k (\opG_k(\x_k) - \x_k)  - \x\|^2,
\end{equation}
which is true for any $k \in \mathbb{N}_{>0}$ and any $\x$. Let $\x$ be in $\fix \opT_k = \fix \opG_k$, i.e., $\x = \x^*_k$. Then, since by definition of fixed point $\x_k^* = (1-\alpha)\x_k^* + \alpha \opG_k(\x^*_k)$, the right-hand side can be written as 
\begin{equation}
\|\x_{k+1} - \x^*_k\|^2 = \|(1 - \alpha_k)(\x_k - \x^*_k) + \alpha_k (\opG_k(\x_k) - \opG_k(\x_k^*))\|^2.
\end{equation}
By applying~\eqref{lemma:eq} and recalling that $\x_k^* = \opG_k(\x^*_k)$, we obtain the bound
\begin{multline}
\|\x_{k+1} - \x^*_k\|^2 \leq (1 - \alpha_k)\|\x_k - \x^*_k\|^2 + \\ \alpha_k \|\opG_k(\x_k) - \opG_k(\x_k^*)\|^2 - \alpha_k(1-\alpha_k)\|\opG_k(\x_k) - \x_k\|^2.
\end{multline}
Since $\opG_k$ is a nonexpansive operator, then $\|\opG_k(\x_k) - \opG_k(\x_k^*)\|^2 \leq \|\x_k - \x^*_k\|^2$, which yields,
\begin{equation}
\|\x_{k+1} - \x^*_k\|^2 \leq \|\x_k - \x^*_k\|^2 - \alpha_k(1-\alpha_k)\|\opG_k(\x_k) - \x_k\|^2,
\end{equation}
and rearranging
\begin{equation}\label{dummy:0}
\alpha_k(1-\alpha_k)\|\opG_k(\x_k) - \x_k\|^2 \leq \|\x_k - \x^*_k\|^2 - \|\x_{k+1} - \x^*_k\|^2.
\end{equation}
We focus now on the term $\|\x_{k+1} - \x^*_{k+1}\|^2$. By adding and subtracting any fixed point of $\opT_{k}$, we have
\begin{equation}
\|\x_{k+1} - \x^*_{k} - (\x^*_{k+1} - \x^*_{k})\|^2,
\end{equation}
which can be expanded as
\begin{multline}\label{dummy:69}
\|\x_{k+1} - \x^*_{k} - (\x^*_{k+1} - \x^*_{k})\|^2 = \|\x_{k+1} - \x^*_{k}\|^2 + \|\x^*_{k+1} - \x^*_{k}\|^2\\ - 2 (\x_{k+1} - \x^*_{k})^\transp(\x^*_{k+1} - \x^*_{k}),
\end{multline}
and upper bounded via Assumptions~\ref{as:tv}-\ref{as:bounded} as,
\begin{equation}\label{dummy70}
\|\x_{k+1} -\x^*_{k+1}\|^2 \leq  \|\x_{k+1} - \x^*_{k}\|^2 + \delta^2 + 4 X \delta,
\end{equation}
or equivalently
\begin{equation}\label{dummy:1}
- \|\x_{k+1} - \x^*_{k}\|^2 \leq - \|\x_{k+1} -\x^*_{k+1}\|^2 + \delta^2 + 4 X \delta,
\end{equation}
By substituting the upper bound~\eqref{dummy:1} into~\eqref{dummy:0}, we obtain 
\begin{equation}\label{dummy:6969}
\alpha_k(1-\alpha_k)\|\opG_k(\x_k) - \x_k\|^2 \leq \|\x_k - \x^*_k\|^2 - \|\x_{k+1} - \x^*_{k+1}\|^2 + 4 X \delta + \delta^2.
\end{equation}
If we now sum this inequality for all $k>0$ and discard the negative terms in the right-hand side, we obtain,
\begin{equation}\label{dummy:2}
\sum_{k=1}^T\alpha_k(1-\alpha_k)\|\opG_k(\x_k) - \x_k\|^2 \leq \|\x_1 - \x^*_1\|^2 + T(4 X \delta + \delta^2),
\end{equation}
and by dividing both sides by $T$ the claim~\eqref{result1} follows.

\vskip2mm

\noindent Case \emph{(b)}
The proof in this case is straightforward. Start by, 
\begin{align}\label{dummy:4}
\|\x_{k+1} - \x_{k+1}^*\| &= \|\x_{k+1} - \x_{k}^* - (\x_{k}^* - \x_{k+1}^*)\| = \nonumber \\ 
&= \|\opT_k(\x_k) - \opT_k(\x_k^*) - (\x_{k}^* - \x_{k+1}^*) \| \leq L_k \|\x_k - \x_k^*\| + \delta,
\end{align}
where we have use the contractive property of $\opT_k$ in Assumption~\ref{as:contraction}, and the triangle inequality. 

By iterating~\eqref{dummy:4} backward in time, we obtain
\begin{equation}
\|\x_{k} - \x_{k}^*\| \leq L_1 \cdots L_{k-1} \|\x_1 - \x^*_1\| + \frac{1 - \bar{L}^{k-1}_k}{1 - \bar{L}_k} \delta, 
\end{equation}
where $\bar{L}_k = \max_k{L_k}$. Thus, by defining $\hat{L}_k = L_1 \cdots L_{k-1}$, the claim~\eqref{result2} is proven.
\qed
\end{proof}

\vskip2mm


\begin{proof} \emph{(Of Theorem~\ref{theo:3})}

Direct by replacing~\eqref{dummy70} by the bound~\eqref{horder} and following the same steps till~\eqref{dummy:2}.  
\qed
\end{proof}

\section*{Appendix: Derivations of~\eqref{tvadmm}-\eqref{tvadmm1}}

The derivation of the recursion~\eqref{tvadmm} and \eqref{tvadmm1} from the Douglas-Rachford splitting applied to the dual~\eqref{admmdual} follows from~\cite[Page~35]{Ryu2015} with minor modifications. 

First of all, the update on their $y^k$, now reads $y^{k+1} = \Pi_{\mathcal{B}}[y^k+\xi^{k+1} - \zeta^{k+1}]$. With the substitutions: $\alpha \to \lambda$, $\tilde{z}^{k} \to \z_{k}$, $\tilde{x}^{k} \to \x_{k}$, and $y^k \to \y_k$, $\alpha u^k \to \p_k$, and finally $\y_k = \p_k + \lambda (\A \x_{k+1} - \c)$, then~\eqref{tvadmm} follows directly. Note that one cannot swap the order of $\p_{k+1}$ and $\x_{k+1}$ to obtain the standard ADMM, since now $\p_{k+1}$ depends on both $\x_{k}$ and $\x_{k+1}$. To obtain the dual variable $\bnu_{k+1}$, as for the Douglas-Rachford splitting \eqref{second} we have that is equivalent to their $\zeta^{k+1}$, that is 
\begin{equation}\label{dummy.nup}
\bnu_{k+1} = \y^{k} + \lambda \B \z_{k+1} = \p_k + \lambda (\A \x_{k+1} + \B \z_{k+1} - \c),
\end{equation}
from which the relation after~\eqref{tvadmm} follows. 

When $\mathcal{B} = \mathbb{R}^m$, then the steps of~\cite[Page~35]{Ryu2015} can be carried out till the end and the standard ADMM follows. In particular, due to~\eqref{dummy.nup}, $\bnu_{k+1} = \p_{k+1}$.

\section*{Acknowledgements}

The author wishes to thank Prof. Panagiotis (Panos) Patrinos at KULeuven and Dr. Adrien Taylor at UCLouvain for insightful discussions and suggestions on an early draft on the manuscript.

\footnotesize
\bibliographystyle{spmpsci}
\bibliography{../../../PaperCollection00}

\end{document}